  \newtheorem{theorem}{Theorem}[section]
  \newtheorem{proposition}[theorem]{Proposition}
  \newtheorem{lemma}[theorem]{Lemma}
  \newtheorem{definition}[theorem]{Definition}
\theoremstyle{remark}
  \newtheorem{remark}[theorem]{Remark}
\newcommand{\pleq}{\leq_{P}}
\newcommand{\parr}{\leftarrow_{P}}
\newcommand{\qarr}{\leftarrow_{Q}}
\newcommand{\KK}{\mathbb{K}}
\newcommand{\CC}{\mathbb{C}}
\newcommand{\ac}{\mathfrak{a}}
\newcommand{\cc}{\mathfrak{c}}
\newcommand{\CL}{\underline{\mathfrak{B}}}
\author{Henri M\"uhle}
\address{Fak. f\"ur Mathematik, Universit\"at Wien, Garnisongasse 3, 1090 Wien, Austria}
\email{henri.muehle@univie.ac.at}
\thanks{This work was funded by the FWF research grant no. Z130-N13.}
\title{Counting Proper Mergings of Chains and Antichains}
\begin{document}

\begin{abstract}
	A proper merging of two disjoint quasi-ordered sets $P$ and $Q$ is a quasi-order on the 
	union of $P$ and $Q$ such that the restriction to $P$ and $Q$ yields the original quasi-order 
	again and such that no elements of $P$ and $Q$ are identified. In this article, we consider the 
	cases where $P$ and $Q$ are chains, where $P$ and $Q$ are antichains, and where $P$ is an 
	antichain and $Q$ is a chain. We give formulas that determine the number of proper mergings in 
	all three cases, and introduce two new bijections from proper mergings of two chains to plane 
	partitions and from proper mergings of an antichain and a chain to monotone colorings of 
	complete bipartite digraphs. Additionally, we use these bijections to count the Galois connections
	between two chains, and between a chain and a Boolean lattice respectively.
\end{abstract}

\maketitle

\section{Introduction}
  \label{sec:introduction}
Given two quasi-ordered sets $(P,\parr)$ and $(Q,\qarr)$, a merging of $P$ and $Q$ is a quasi-order 
$\leftarrow$ on the union of $P$ and $Q$ such that the restriction of $\leftarrow$ to $P$ 
or $Q$ yields $\parr$ respectively $\qarr$ again. In other words, a merging of $P$ 
and $Q$ is a quasi-order on the union of $P$ and $Q$, which does not change the quasi-orders on $P$ and 
$Q$. 

In \cite{ganter11merging} a characterization of the set of mergings of two arbitrary quasi-ordered sets 
$P$ and $Q$ is given. In particular, it turns out that every merging $\leftarrow$ of $P$ and $Q$ can be 
uniquely described by two binary relations $R\subseteq P\times Q$ and $S\subseteq Q\times P$. The 
relation $R$ can be interpreted as a description, which part of $P$ is weakly below $Q$, and analogously 
the relation $S$ can be interpreted as a description, which part of $Q$ is weakly below $P$. A merging 
is called proper if $R\cap S^{-1}=\emptyset$, and hence if no element of $P$ is identified with an 
element of $Q$. 

The characterization in \cite{ganter11merging} uses techniques of Formal Concept Analysis 
(FCA, see \cite{ganter99formal}), a branch of mathematics, which investigates binary 
relations, so-called formal contexts, between two sets. The starting point of FCA is the construction 
of a closure system from such a formal context. Then, this closure system induces a complete lattice, 
when ordering the closures by inclusion. (A complete lattice is a possibly infinite lattice which has a
unique top and a unique bottom element.) The basic theorem of FCA states that every complete lattice 
can be derived from a formal context. In \cite{ganter11merging}, it was shown that the mergings
of two quasi-ordered sets $P$ and $Q$ form a distributive lattice, and can thus be described by a 
formal context. Notably, this formal context can be constructed easily from the quasi-orders
$\parr$ and $\qarr$. The proper mergings of $P$ and $Q$ form a distributive 
sublattice of the previous lattice.

Unfortunately, the formal context provides only very little information about the cardinality of its 
associated lattice. Hence, although the set of mergings of two quasi-orded sets $P$ and $Q$ can be 
described completely, not much is known about its cardinality. This article provides a first
enumerative analysis of the set of proper mergings of two special classes of quasi-ordered sets, namely 
chains and antichains. The actual genesis of this article was the observation that the number of proper
mergings of two $n$-chains is given by
\begin{align*}
	F_{\cc}(n)=\frac{(2n)!(2n+1)!}{(n!(n+1)!)^{2}}.
\end{align*}
It is stated in \cite{chen09pairs} that $F_{\cc}(n)$ also determines the number of plane partitions 
with $n$ rows, $n$ columns and largest part at most $2$. (See \cite{sloane}*{Sequence A000891} for 
some other objects counted by this number.) It is not hard to define a bijection between these plane
partitions, and the proper mergings of two $n$-chains, as will be described in 
\prettyref{sec:bijection_chains}. It is then straight-forward to extend this bijection to the set of plane 
partitions with $m$ rows, $n$ columns and largest part at most $2$, and the set 
of proper mergings of an $m$-chain and an $n$-chain. Since the number of such plane partitions can be 
derived from MacMahon's formula, see \eqref{eq:macmahon}, this bijection easily allows for counting the 
proper mergings of two chains. Interestingly, we can use this bijection for counting the Galois connections
between two chains. The key theorem for this correspondence is \cite{ganter99formal}*{Theorem~53}, which
states that the Galois connections between two concept lattices correspond to dual bonds between the 
corresponding formal contexts. 

After succeeding in enumerating proper mergings of chains, we became curious whether we can count 
proper mergings of two antichains in a similar way. Unfortunately, we cannot give a bijection between
the set of proper mergings of two antichains and any other known mathematical object. However, we 
are able to enumerate the proper mergings of two antichains with the help of a generating 
function, which was found by Christian Krattenthaler. See \prettyref{sec:antichains} for the details.

The third part of this article is devoted to the enumeration of proper mergings of an $m$-antichain 
and an $n$-chain. When computing the number of these proper mergings with the help of Daniel Borchmann's
FCA-tool \textsc{conexp-clj} \cite{conexp}, we recovered the sequence \cite{sloane}*{A085465}. The formula
generating this sequence is a special case of the following formula.
\begin{align*}
	F_{\ac,\cc}(m,n)=\sum_{i=1}^{n+1}{\Bigl(n+2-i)^{m}-(n+1-i)^{m}\Bigr)\cdot i^{m}}.
\end{align*}
It is stated in \cite{jovovic04antichains} that $F_{\ac,\cc}(m,n)$ also determines the number of
monotone $(n+1)$-colorings of the complete bipartite digraph $\vec{K}_{m,m}$. In
\prettyref{sec:bijection_antichain_chain}, we construct a bijection between the set of proper 
mergings of an $m$-antichain and an $n$-chain, and the set of monotone $(n+1)$-colorings of
$\vec{K}_{m,m}$. We can also use this bijection, in order to count the number of Galois connections between
a chain and a Boolean lattice. 

The precise statements of the results described in the previous paragraphs are the following. 

\begin{theorem}
  \label{thm:main_theorem}
	Let $\mathfrak{M}_{P,Q}^{\bullet}$ denote the set of proper mergings of two quasi-ordered sets
	$P$ and $Q$. 
	\begin{enumerate}
		\item[(i)] Let $P$ and $Q$ be chains. If $\lvert P\rvert=m,\lvert Q\rvert=n$, then
		  \begin{align*}
			\lvert\mathfrak{M}_{P,Q}^{\bullet}\rvert = 
			  \frac{1}{n+m+1}\binom{n+m+1}{m+1}\binom{n+m+1}{m}.
		  \end{align*}
		\item[(ii)] Let $P$ and $Q$ be antichains. If $\lvert P\rvert=m,\lvert Q\rvert=n$, 
		  then
		  \begin{align*}
			\lvert\mathfrak{M}_{P,Q}^{\bullet}\rvert = 
			  \sum_{n_{1}+m_{1}+k_{1}=m}{\binom{m}{n_{1},m_{1},k_{1}}(-1)^{k_{1}}
			  \Bigl(2^{n_{1}}+2^{m_{1}}-1\Bigr)^{n}}.
		  \end{align*}
		\item[(iii)] Let $P$ be an antichain, and let $Q$ be a chain. 
		  If $\lvert P\rvert=m,\lvert Q\rvert=n$, then
		  \begin{align*}
			\lvert\mathfrak{M}_{P,Q}^{\bullet}\rvert =
			  \sum_{i=1}^{n+1}{\Bigl((n+2-i)^{m}-(n+1-i)^{m}\Bigr)i^{m}}.
		  \end{align*}
	\end{enumerate}
\end{theorem}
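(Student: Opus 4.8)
The plan is to treat the three cases separately, in each case using the characterization from \cite{ganter11merging} to reduce the enumeration of proper mergings to the enumeration of a more familiar combinatorial object. Throughout write $P=\{a_1,\dots,a_m\}$ and $Q=\{b_1,\dots,b_n\}$, and recall that a proper merging is encoded by a pair of relations $R\subseteq P\times Q$ and $S\subseteq Q\times P$ with $R\cap S^{-1}=\emptyset$, subject to the transitivity conditions that force $\leftarrow$ to be a quasi-order. \emph{Case (i) (two chains).} First I would work out what these conditions say when $P$ and $Q$ are chains: compatibility with the two linear orders forces both $R$ and $S$ to be staircase-shaped order filters of the grid $\{1,\dots,m\}\times\{1,\dots,n\}$, and properness together with transitivity is exactly the statement that one staircase is nested inside the other. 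The natural move is then to superimpose the two staircases into a single array $\pi=(\pi_{ij})$ with $\pi_{ij}\in\{0,1,2\}$ counting how many staircases cover cell $(i,j)$; the monotonicity of the staircases and the nesting condition translate precisely into $\pi$ being a plane partition with $m$ rows, $n$ columns and largest part at most $2$, and the slicing into the level sets $\{(i,j):\pi_{ij}\ge 1\}\supseteq\{(i,j):\pi_{ij}\ge 2\}$ gives the inverse. With this bijection in hand I would apply MacMahon's box formula \eqref{eq:macmahon} to plane partitions inside an $m\times n\times 2$ box and telescope the resulting product down to $\frac{1}{n+m+1}\binom{n+m+1}{m+1}\binom{n+m+1}{m}$.

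\emph{Case (iii) (antichain and chain).} Here the linear order on $Q$ survives while $P$ carries no relations, so the constraints on $(R,S)$ decouple over the $m$ elements of $P$ but stay coupled through the chain $Q$. I would show that each proper merging amounts to recording, for every $a_k\in P$, how far up the chain $Q$ it sits when approached from below and from above, and that translating the proper-merging axioms yields precisely a \emph{monotone} $(n{+}1)$-coloring of $\vec{K}_{m,m}$ in the sense of \cite{jovovic04antichains}: reading the two relations off the two vertex classes of $\vec{K}_{m,m}$, every color on the source side must be at most every color on the sink side. This is the promised bijection. To finish I would count these colorings directly: conditioning on the minimal color $i$ occurring on the sink side contributes the factor $(n{+}2{-}i)^m-(n{+}1{-}i)^m$, the source side is then free to use any color in $\{1,\dots,i\}$, contributing $i^m$, and summing over $i=1,\dots,n+1$ gives exactly the stated sum.

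\emph{Case (ii) (two antichains).} This is the case I expect to be the hardest, since neither side carries any order and there is no classical object to biject onto. Now $(R,S)$ is constrained only by the transitivity axioms coupling $P$ and $Q$, which I would rewrite as a local compatibility rule for each pair $(a,b)\in P\times Q$. The shape of the answer suggests the bookkeeping: classify each element of $P$ according to how it may interact with $Q$ --- essentially ``can lie below'', ``can lie above'', and a degenerate class --- which produces the three-part split $n_1+m_1+k_1=m$ and the multinomial coefficient $\binom{m}{n_1,m_1,k_1}$, while inclusion--exclusion to delete the forbidden identifications supplies the sign $(-1)^{k_1}$; each element of $Q$ then contributes the independent factor $2^{n_1}+2^{m_1}-1$. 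Rather than forcing a direct bijective proof, I would make this rigorous through Krattenthaler's generating-function computation, setting up the generating function that tracks the three classes of $P$-elements and extracting the relevant coefficient.

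The genuine obstacle lies in case (ii). Once the relational dictionary is set up, cases (i) and (iii) reduce to checking that a transparent encoding is a bijection and then quoting or re-deriving a known enumeration, so the only delicate point there is making the correspondence between the merging axioms and the combinatorial conditions (plane-partition monotonicity, respectively the monotone-coloring inequality) airtight. In case (ii) there is no such shortcut: one must verify that the three-way classification of $P$ is exhaustive and non-overlapping, that the per-element factor $2^{n_1}+2^{m_1}-1$ is correct, and that the inclusion--exclusion removes exactly the improper mergings, and it is in this combinatorial accounting that the argument is most error-prone.
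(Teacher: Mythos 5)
Your treatments of (i) and (iii) follow the paper exactly. Part (i) is proved in \prettyref{sec:chains} by precisely the encoding you describe (your two nested staircases are the level sets $\{(i,j):\pi_{i,j}\geq 2\}$ and $\{(i,j):\pi_{i,j}\geq 1\}$ of the plane partition of \prettyref{def:bijection_chains} and \prettyref{thm:bijection_chains}), followed by MacMahon's formula; part (iii) is proved in \prettyref{sec:antichain_chain} by the same bijection onto monotone $(n+1)$-colorings of $\vec{K}_{m,m}$ (\prettyref{def:bijection_antichain_chain}, \prettyref{thm:bijection_antichain_chain}). The only difference there is that the paper quotes \prettyref{prop:monotone_colorings} for the number of monotone colorings, while you re-derive it by conditioning on the minimal sink-side color; your derivation is correct.

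In part (ii) you take a genuinely different route from the paper, and it can be made to work, but the mechanism you assign to the sign $(-1)^{k_1}$ is not the right one, and this is exactly where your plan as written would stall. The paper's argument is global: in a proper merging of two antichains every connected component of the bipartite Hasse diagram is oriented entirely one way, every component with at least two vertices can be flipped, and the exponential formula gives $G(x,y)=\exp\bigl(2B_{c}(x,y)-x-y\bigr)=B(x,y)^{2}e^{-x}e^{-y}$, from which the formula follows by coefficient extraction. Your argument is local: label each $P$-element ``may lie below $Q$'' ($n_1$ of them), ``may lie above $Q$'' ($m_1$), or phantom ($k_1$, signed), and let each $Q$-element independently choose either a subset of the first class as its lower neighbors or a subset of the second as its upper neighbors, giving $(2^{n_1}+2^{m_1}-1)^{n}$ options; since the bond conditions are vacuous for antichains (every subset is an intent and an extent of the contraordinal scale of an antichain), every such choice is a proper merging. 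But observe that improper mergings never enter this count at all: each $Q$-element takes $R$-edges or $S$-edges, never both, which already forces $R\circ S=\emptyset=S\circ R$ and $R\cap S^{-1}=\emptyset$. So the inclusion--exclusion cannot be ``removing the forbidden identifications'', and trying to prove that statement would fail. The actual problem your sign must repair is that the three-way classification is \emph{not} a function of the merging: an isolated $P$-element is compatible with both the ``below'' and the ``above'' label, so a merging with $j$ isolated $P$-elements arises from $2^{j}$ unsigned labelings. Summing the sign over all labelings compatible with a fixed merging gives total weight $(1+1-1)^{j}=1$, and exchanging the two orders of summation over pairs (labeling, compatible merging) yields the claimed identity. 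With that correction your proof is complete, needs no generating function, and is arguably more elementary than the paper's; what the paper's component-flip argument buys in exchange is a structural explanation of the answer as the square $B(x,y)^{2}$ of the bipartite-graph generating function, corrected by $e^{-x-y}$ for the single-vertex components.
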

In \prettyref{thm:main_theorem}~(iii), we need to be careful with the case $m=0$. In this case, there 
appears a term of the form ``$0^{0}$`` in the sum. Since there is exactly one proper merging of an empty
antichain and some chain, we need to interpret this term as being equal to zero. 

This article is organized as follows: in \prettyref{sec:preliminaries}, we give a short introduction 
to Formal Concept Analysis in order to make the reader familiar with notions such as cross-table,
intent, extent, bond, and other terminology from FCA. Moreover, we formally define mergings of two
quasi-ordered sets. In \prettyref{sec:chains}, we define the bijection between proper mergings of two
chains, and plane partitions with largest part at most $2$. We conclude \prettyref{thm:main_theorem}~(i) 
in \prettyref{sec:counting_mergings_chains}, and exploit this bijection in order to count the Galois
connections between two chains in \prettyref{sec:galois_connections_chains}. In 
\prettyref{sec:antichains}, we compute the generating function for the proper mergings of two 
antichains and conclude \prettyref{thm:main_theorem}~(ii). In \prettyref{sec:antichain_chain}, we 
construct the bijection between proper mergings of an antichain and a chain, and monotone colorings of a 
complete bipartite digraph. We conclude \prettyref{thm:main_theorem}~(iii) in 
\prettyref{sec:bijection_antichain_chain}, and exploit this bijection in order to count the Galois 
connections between chains and Boolean lattices in \prettyref{sec:galois_connections_boolean_chain}. 

\section{Preliminaries}
  \label{sec:preliminaries}
In this section we recall the basic notations and definitions needed in this article. For a
detailed introduction to Formal Concept Analysis, we refer to \cite{ganter99formal}. 

\subsection{Formal Concept Analysis}
  \label{sec:formal_concept_analysis}
The theory of Formal Concept Ana\-lysis (FCA) was introduced in the 1980s by Rudolf Wille 
(see \cite{wille82restructuring}) as an approach to restructure lattice theory. The initial goal was 
to interpret lattices as hierarchies of concepts and thus to give meaning to the lattice elements in 
a fixed context. Such a \emph{formal context} is a triple $(G,M,I)$, where $G$ is a set of 
so-called \emph{objects}, $M$ is a set of so-called \emph{attributes} and $I\subseteq G\times M$ is
a binary relation that describes whether an object \emph{has} an attribute. Given a formal context
$\mathbb{K}=(G,M,I)$, we define two derivation operators
\begin{align}
	\label{eq:int} (\cdot)^{I}:\wp(G)\rightarrow\wp(M), & 
	  \quad A\mapsto A^{I}=\{m\in M\mid g\;I\;m\;\text{for all}\;g\in A\},\\
	\label{eq:ext} (\cdot)^{I}:\wp(M)\rightarrow\wp(G), & 
	  \quad B\mapsto B^{I}=\{g\in G\mid g\;I\;m\;\text{for all}\;m\in B\},
\end{align}
where $\wp$ denotes the power set. The notation $g\;I\;m$ is to be understood as $(g,m)\in I$. It
shall be mentioned that these derivation operators form a Galois connection between $\wp(G)$ and
$\wp(M)$, and hence, the composition $(\cdot)^{II}$ is a closure operator on $\wp(G)$ respectively
on $\wp(M)$. (See \prettyref{sec:galois_connections_chains} for an explicit definition of Galois 
connections.) We notice the natural duality between these operators, which justifies the use of 
the same symbol for both of them.

Let now $A\subseteq G$, and $B\subseteq M$. The pair $\mathfrak{b}=(A,B)$ is called 
\emph{formal concept of $\KK$} if $A^{I}=B$ and $B^{I}=A$. In this case, we call $A$ the 
\emph{extent} and $B$ the \emph{intent of $\mathfrak{b}$}. It can easily be seen that for every 
$A\subseteq G$, and $B\subseteq M$, the pairs $\bigl(A^{II},A^{I}\bigr)$ and 
$\bigl(B^{I},B^{II}\bigr)$ are formal concepts, respectively. Conversely, every formal concept of 
$\KK$ can be written in such a way. Thus, every formal concept of a given formal context can be seen 
from an extensional (``Which objects does the concept describe?'') as well as an intensional 
(``Which attributes describe the concept?'') viewpoint. We denote the set of all formal concepts of 
$\KK$ by $\mathfrak{B}(\KK)$, and define a partial order on $\mathfrak{B}(\KK)$ by
\begin{multline}
	(A_{1},B_{1})\leq(A_{2},B_{2})\quad\text{if and only if} \\ 
		A_{1}\subseteq A_{2}\quad(\text{or equivalently}\;B_{1}\supseteq B_{2}).
\end{multline}
Let $\CL(\KK)$ denote the poset $\bigl(\mathfrak{B}(\KK),\leq\bigr)$. The basic theorem of FCA (see 
\cite{ganter99formal}*{Theorem~3}) states that $\CL(\KK)$ is a lattice, the so-called 
\emph{concept lattice of $\KK$}. Moreover, every finite lattice is a concept lattice\footnote{More 
precisely, the basic theorem of FCA states that every \emph{complete lattice} is a concept lattice. 
A complete lattice is a (possibly infinite) lattice which has a unique minimal and a unique maximal
element. In particular, every finite lattice is a complete lattice.}. This result implies that every 
element of a finite lattice can be interpreted as a closure of a suitable closure system. 

Usually, a formal context is represented by a cross-table, where the rows represent the 
objects and the columns represent the attributes. The cell in row $g$ and column $m$ contains a 
cross if and only if $g\;I\;m$. See \prettyref{fig:formal_contexts} for two small examples. The 
reader is encouraged to compute the concept lattices of both formal contexts in order to see that 
these lattices are indeed isomorphic to a $4$-chain, respectively a Boolean lattice with eight 
elements. 

\begin{figure}
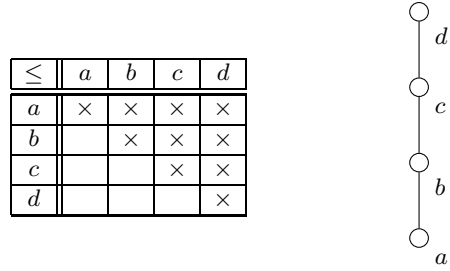
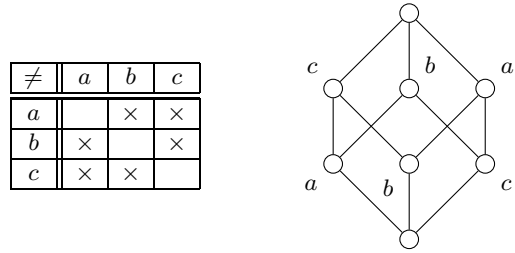

	\centering
	\subfigure[The cross-table representing the formal context associated to a $4$-chain.]{
		\label{fig:chain_context}
		\chainContext
	}
	\subfigure[The cross-table representing the formal context associated to a Boolean lattice
	  with eight elements.]{
		\label{fig:boolean_context}
		\booleanContext
	}
	\caption{Two examples for a formal context associated to a lattice.}
	\label{fig:formal_contexts}
\end{figure}

For every context $\KK=(G,M,I)$, there are two maps
\begin{align}
	\label{eq:object}
	  \gamma & :G\rightarrow\CL(\KK), && \hspace*{-2cm} g\mapsto \bigl(\{g\}^{II},\{g\}^{I}\bigr),
	  \quad\text{and}\\
	\label{eq:attribute}
	  \mu & :M\rightarrow\CL(\KK), && \hspace*{-2cm} m\mapsto \bigl(\{m\}^{I},\{m\}^{II}\bigr),
\end{align}
which map each object, respectively attribute, to its corresponding formal concept. It is common sense
in FCA to label the Hasse diagram of $\CL(\KK)$ in the following way: the node representing a formal 
concept $\mathfrak{b}\in\mathfrak{B}(\KK)$ is labeled with the object $g$ (or with the attribute $m$) 
if and only if $\mathfrak{b}=\gamma g$ (or $\mathfrak{b}=\mu m$). Object labels are attached 
below the nodes in the Hasse diagram, and attribute labels above. In this presentation, the extent 
(intent) of a formal concept corresponds to the labels weakly below (weakly above) this formal 
concept in the Hasse diagram of $\CL(\KK)$. (In \prettyref{fig:chain_context}, however, we omitted 
the attribute labels, since they would be attached to the same formal concept as the corresponding 
object label.)

There is yet another way to interpret formal contexts. Let $(P,\pleq)$ be a poset. Then, 
$(P,P,\pleq)$ is a formal context and its cross-table corresponds to the incidence matrix of
$(P,\pleq)$, which means that we can read the order-relation of $(P,\pleq)$ from the cross-table. 
Moreover, the concept lattice $\CL(P,P,\leq)$ is isomorphic to the smallest (complete) lattice that 
contains $(P,\pleq)$ as a subposet, the so-called \emph{Dedekind-MacNeille completion of $(P,\pleq)$}. 
However, not every crosstable of a formal context $(P,P,I)$ can be interpreted as the incidence 
matrix of a partial order on $P$. (For instance, the cross-table shown in
\prettyref{fig:boolean_context} does not correspond to a partial order on the set $\{a,b,c\}$.)

In the remainder of this article, we will usually represent posets (and binary relations in general) 
by the cross-table of the corresponding formal context. Whenever we speak of a row or column in combination 
with a poset element $p\in P$, we mean the corresponding set $\{p\}^{\pleq}$ in the sense
of \eqref{eq:int} (respectively \eqref{eq:ext}).

\subsection{Bonds and Mergings}
  \label{sec:bonds_mergings}
Let $\KK_{1}=(G_{1},M_{1},I_{1}),\KK_{2}=(G_{2},M_{2},I_{2})$ be formal contexts. A binary
relation $R\subseteq G_{1}\times M_{2}$ is called \emph{bond from $\KK_{1}$ to $\KK_{2}$} if for 
every object $g\in G_{1}$, the row $\{g\}^{R}$ is an intent in $\KK_{2}$ and for every 
$m\in M_{2}$, the column $\{m\}^{R}$ is an extent in $\KK_{1}$. 

Now let $(P,\parr)$ and $(Q,\qarr)$ be disjoint quasi-ordered sets. Let $R\subseteq P\times Q$, and 
$S\subseteq Q\times P$. Define a relation $\leftarrow_{R,S}$ on $P\cup Q$ as
\begin{multline}
	\label{eq:merging}
	p\leftarrow_{R,S} q\quad\text{if and only if}\\
	p\parr q\;\;\text{or}\;\;p\qarr q\;\;\text{or}\;\;p\;R\;q\;\;\text{or}\;\;p\;S\;q,
\end{multline}
for all $p,q\in P\cup Q$. The pair $(R,S)$ is called \emph{merging of $P$ and $Q$} if 
$(P\cup Q,\leftarrow_{R,S})$ is a quasi-ordered set. Moreover, a merging is called \emph{proper} if 
$R\cap S^{-1}=\emptyset$. Since for fixed quasi-ordered sets $(P,\parr)$ and $(Q,\qarr)$ the relation 
$\leftarrow_{R,S}$ is uniquely determined by $R$ and $S$, we refer to $\leftarrow_{R,S}$ as a 
(proper) merging of $P$ and $Q$ as well. Let $\circ$ denote the relational 
product\footnote{Let $R\subseteq A\times B$ and $S\subseteq B\times C$ be relations between sets 
$A,B,C$. The \emph{relational product} is the relation $R\circ S\subseteq A\times C$ that is given by 
$R\circ S=\{(a,c)\mid (a,b)\in R\;\text{and}\;(b,c)\in S\;\text{for some}\;b\in B\}$.}.

\begin{proposition}[\cite{ganter11merging}*{Proposition~2}]
  \label{prop:classification_mergings}
	Let $(P,\parr)$ and $(Q,\qarr)$ be disjoint quasi-ordered sets, and let 
	$R\subseteq P\times Q$, and $S\subseteq Q\times P$. The pair $(R,S)$ is a merging of $P$ and $Q$ 
	if and only if all of the following properties are satisfied:
	\begin{enumerate}
		\item $R$ is a bond from $(P,P,\not\rightarrow_{P})$ to $(Q,Q,\not\rightarrow_{Q})$,
		\item $S$ is a bond from $(Q,Q,\not\rightarrow_{Q})$ to $(P,P,\not\rightarrow_{P})$,
		\item $R\circ S$ is contained in $\parr$, and
		\item $S\circ R$ is contained in $\qarr$.
	\end{enumerate}
	Moreover, the relation $\leftarrow_{R,S}$ as defined in \eqref{eq:merging} is antisymmetric if 
	and only if $\parr$ and $\qarr$ are both antisymmetric and $R\cap S^{-1}=\emptyset$. 
\end{proposition}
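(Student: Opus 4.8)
The plan is to note that, by the shape of \eqref{eq:merging}, the restriction of $\leftarrow_{R,S}$ to $P$ is $\parr$ and its restriction to $Q$ is $\qarr$ automatically (the disjointness of $P$ and $Q$ rules out the other three clauses in each case). Thus $(R,S)$ being a merging means exactly that $\leftarrow_{R,S}$ is a quasi-order on $P\cup Q$, i.e.\ reflexive and transitive. Reflexivity is free, since $\parr$ and $\qarr$ are reflexive and every element lies in $P$ or in $Q$; it imposes no condition on $R$ or $S$. Hence the whole first assertion reduces to transitivity, which I would analyse by splitting into cases according to where the three elements $x,y,z$ lie.

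There are $2^{3}=8$ cases. The homogeneous cases $x,y,z\in P$ and $x,y,z\in Q$ hold automatically by transitivity of $\parr$ and $\qarr$. The two cases that ``cross back'' yield the two product conditions: for $x\in P$, $y\in Q$, $z\in P$ the hypotheses are $x\mathbin{R}y$ and $y\mathbin{S}z$ while the conclusion is $x\parr z$, which is precisely $R\circ S\subseteq\parr$; dually $x\in Q$, $y\in P$, $z\in Q$ gives $S\circ R\subseteq\qarr$. The remaining four cases, in which exactly two of the elements lie on one side, produce the one-sided absorption inclusions
\begin{align*}
	\parr\circ R\subseteq R,\qquad R\circ\qarr\subseteq R,\qquad
	\qarr\circ S\subseteq S,\qquad S\circ\parr\subseteq S.
\end{align*}

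The key step is to identify the two $R$-inclusions with $R$ being a bond from $(P,P,\not\rightarrow_{P})$ to $(Q,Q,\not\rightarrow_{Q})$ (and the two $S$-inclusions with $S$ being a bond in the other direction). For this I would establish the description of the contraordinal scale: in $(P,P,\not\rightarrow_{P})$ the double derivation $A^{II}=\{p : \exists\, a\in A,\; p\parr a\}$ of a set $A$ equals its $\parr$-down-closure, so that the extents are exactly the order ideals of $\parr$, and dually the intents are exactly the order filters. Granting this, ``every column $\{q\}^{R}$ is an extent of $(P,P,\not\rightarrow_{P})$'' says each column is closed under $\parr$-predecessors, i.e.\ $\parr\circ R\subseteq R$, while ``every row $\{p\}^{R}$ is an intent of $(Q,Q,\not\rightarrow_{Q})$'' says each row is closed under $\qarr$-successors, i.e.\ $R\circ\qarr\subseteq R$; so $R$ is a bond if and only if both $R$-inclusions hold, and symmetrically for $S$. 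Assembling the eight cases then shows transitivity of $\leftarrow_{R,S}$ is equivalent to the conjunction of the four stated properties. I expect this identification to be the main obstacle: everything hinges on the intrinsic computation of $A^{II}$ above, after which the correspondence is pure bookkeeping.

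For the final assertion I would run the analogous two-element case analysis on antisymmetry. With $x,y\in P$ (respectively $x,y\in Q$) the pair $x\leftarrow_{R,S}y$, $y\leftarrow_{R,S}x$ reduces to $x\parr y$, $y\parr x$ (respectively the $\qarr$-statements), contributing exactly antisymmetry of $\parr$ and of $\qarr$. With $x\in P$, $y\in Q$ the two relations read $x\mathbin{R}y$ and $y\mathbin{S}x$, and since $P$ and $Q$ are disjoint we have $x\neq y$; so antisymmetry is violated by any such pair, and is restored precisely when no such pair exists, i.e.\ when $R\cap S^{-1}=\emptyset$ (the case $x\in Q$, $y\in P$ gives the identical condition). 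Collecting the contributions yields that $\leftarrow_{R,S}$ is antisymmetric if and only if $\parr$ and $\qarr$ are antisymmetric and $R\cap S^{-1}=\emptyset$, which is exactly the properness requirement in the partial-order setting.
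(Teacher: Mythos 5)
Your proof is correct, but there is nothing in the paper to compare it against: the paper imports this statement by citation from \cite{ganter11merging}*{Proposition~2} and gives no proof of its own. Your argument is the natural self-contained one. The reduction is sound: by disjointness of $P$ and $Q$ the restrictions of $\leftarrow_{R,S}$ are automatically $\parr$ and $\qarr$, reflexivity is inherited, and the eight-fold case analysis of transitivity correctly produces conditions (3), (4) and the four absorption inclusions $\parr\circ R\subseteq R$, $R\circ\qarr\subseteq R$, $\qarr\circ S\subseteq S$, $S\circ\parr\subseteq S$. The crux, as you say, is the computation in the contraordinal scale: for a quasi-order, $A^{II}={\downarrow}A$ (your formula $\{p:\exists\,a\in A,\;p\parr a\}$), so extents are exactly the order ideals and intents exactly the order filters; note that your computation uses only reflexivity and transitivity, so it is valid for quasi-orders and not merely posets, which matters since the proposition is stated at that level of generality. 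With that identification, ``every column of $R$ is an extent'' and ``every row of $R$ is an intent'' are precisely the two $R$-absorptions (and dually for $S$), so conditions (1)--(4) together are equivalent to transitivity. The antisymmetry analysis is also complete: the homogeneous two-element cases give antisymmetry of $\parr$ and $\qarr$, and in the mixed case disjointness forces $x\neq y$, so antisymmetry holds there exactly when no pair lies in $R\cap S^{-1}$. One small presentational caution: keep the direction conventions ($\parr$ versus $\rightarrow_{P}$, filters versus ideals) explicit throughout, since a silent flip in any one place would break the matching between the bond conditions and the absorption inclusions; as written, your identifications are consistent.
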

In the case that $P$ and $Q$ are posets, this proposition implies that $(P\cup Q,\leftarrow_{R,S})$ is 
a poset again if and only if $(R,S)$ is a proper merging of $P$ and $Q$. 

Denote the set of mergings of $P$ and $Q$ by 
$\mathfrak{M}_{P,Q}$, and define a partial order on $\mathfrak{M}_{P,Q}$ by 
\begin{align}
	(R_{1},S_{1})\preceq(R_{2},S_{2})\quad\text{if and only if}\quad 
	  R_{1}\subseteq R_{2}\;\text{and}\;S_{1}\supseteq S_{2}.
\end{align}
It was shown in \cite{ganter11merging}*{Theorem~1} that 
$\bigl(\mathfrak{M}_{P,Q},\preceq\bigr)$ is a distributive lattice, where 
$(\emptyset,Q\times P)$ is the unique minimal element, and $(P\times Q,\emptyset)$ the unique 
maximal element. Let $\mathfrak{M}_{P,Q}^{\bullet}\subseteq\mathfrak{M}_{P,Q}$ denote the
set of all proper mergings of $P$ and $Q$. It is also stated in 
\cite{ganter11merging}*{Theorem~1} that $(\mathfrak{M}_{P,Q}^{\bullet},\preceq)$ is a 
(complete) sublattice of $(\mathfrak{M}_{P,Q},\preceq)$, which is still distributive. 

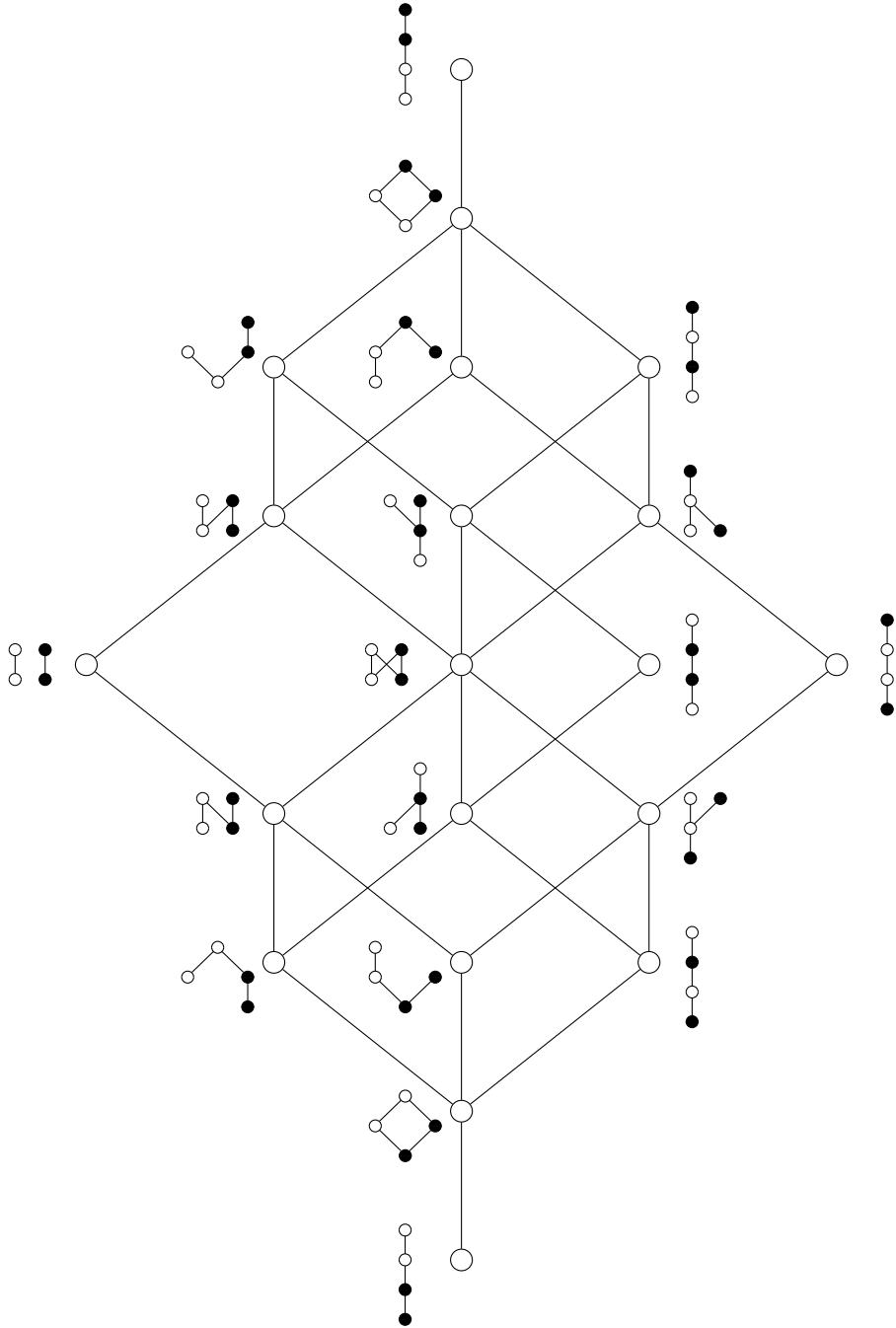
\begin{figure}
	\centering
	\begin{tikzpicture}\small
	\def\x{2.5};
	\def\y{2};
	\def\r{.8};
	\def\rs{.8};
	\draw(3*\x,1*\y) node[draw,circle,scale=\r](n1){};
	\draw(3*\x,2*\y) node[draw,circle,scale=\r](n2){};
	\draw(2*\x,3*\y) node[draw,circle,scale=\r](n3){};
	\draw(3*\x,3*\y) node[draw,circle,scale=\r](n4){};
	\draw(4*\x,3*\y) node[draw,circle,scale=\r](n5){};
	\draw(2*\x,4*\y) node[draw,circle,scale=\r](n6){};
	\draw(3*\x,4*\y) node[draw,circle,scale=\r](n7){};
	\draw(4*\x,4*\y) node[draw,circle,scale=\r](n8){};
	\draw(1*\x,5*\y) node[draw,circle,scale=\r](n9){};
	\draw(3*\x,5*\y) node[draw,circle,scale=\r](n10){};
	\draw(4*\x,5*\y) node[draw,circle,scale=\r](n11){};
	\draw(5*\x,5*\y) node[draw,circle,scale=\r](n12){};
	\draw(2*\x,6*\y) node[draw,circle,scale=\r](n13){};
	\draw(3*\x,6*\y) node[draw,circle,scale=\r](n14){};
	\draw(4*\x,6*\y) node[draw,circle,scale=\r](n15){};
	\draw(2*\x,7*\y) node[draw,circle,scale=\r](n16){};
	\draw(3*\x,7*\y) node[draw,circle,scale=\r](n17){};
	\draw(4*\x,7*\y) node[draw,circle,scale=\r](n18){};
	\draw(3*\x,8*\y) node[draw,circle,scale=\r](n19){};
	\draw(3*\x,9*\y) node[draw,circle,scale=\r](n20){};
	\draw(n1) -- (n2);
	\draw(n2) -- (n3);
	\draw(n2) -- (n4);
	\draw(n2) -- (n5);
	\draw(n3) -- (n6);
	\draw(n3) -- (n7);
	\draw(n4) -- (n6);
	\draw(n4) -- (n8);
	\draw(n5) -- (n7);
	\draw(n5) -- (n8);
	\draw(n6) -- (n9);
	\draw(n6) -- (n10);
	\draw(n7) -- (n10);
	\draw(n7) -- (n11);
	\draw(n8) -- (n10);
	\draw(n8) -- (n12);
	\draw(n9) -- (n13);
	\draw(n10) -- (n13);
	\draw(n10) -- (n14);
	\draw(n10) -- (n15);
	\draw(n11) -- (n14);
	\draw(n12) -- (n15);
	\draw(n13) -- (n16);
	\draw(n13) -- (n17);
	\draw(n14) -- (n16);
	\draw(n14) -- (n18);
	\draw(n15) -- (n17);
	\draw(n15) -- (n18);
	\draw(n16) -- (n19);
	\draw(n17) -- (n19);
	\draw(n18) -- (n19);
	\draw(n19) -- (n20);
	\draw(2.7*\x,.9*\y) node{\lOne{0}{\rs}};
	\draw(2.7*\x,1.9*\y) node{\lTwo{0}{\rs}};
	\draw(1.7*\x,2.9*\y) node{\lThree{0}{\rs}};
	\draw(2.7*\x,2.9*\y) node{\lFour{0}{\rs}};
	\draw(4.23*\x,2.9*\y) node{\lFive{0}{\rs}};
	\draw(1.7*\x,4*\y) node{\lSix{0}{\rs}};
	\draw(2.7*\x,4.1*\y) node{\lSeven{0}{\rs}};
	\draw(4.3*\x,3.9*\y) node{\lEight{0}{\rs}};
	\draw(.7*\x,5*\y) node{\lNine{0}{\rs}};
	\draw(2.6*\x,5*\y) node{\lTen{0}{\rs}};
	\draw(4.23*\x,5*\y) node{\lEleven{0}{\rs}};
	\draw(5.27*\x,5*\y) node{\lTwelve{0}{\rs}};
	\draw(1.7*\x,6*\y) node{\reflectbox{\lSix{180}{\rs}}};
	\draw(2.7*\x,5.9*\y) node{\reflectbox{\lSeven{180}{\rs}}};
	\draw(4.3*\x,6.1*\y) node{\reflectbox{\lEight{180}{\rs}}};
	\draw(1.7*\x,7.1*\y) node{\reflectbox{\lThree{180}{\rs}}};
	\draw(2.7*\x,7.1*\y) node{\reflectbox{\lFour{180}{\rs}}};
	\draw(4.23*\x,7.1*\y) node{\reflectbox{\lFive{180}{\rs}}};
	\draw(2.7*\x,8.15*\y) node{\reflectbox{\lTwo{180}{\rs}}};
	\draw(2.7*\x,9.1*\y) node{\reflectbox{\lOne{180}{\rs}}};
\end{tikzpicture}
	\caption{The lattice of proper mergings of two $2$-chains.}
	\label{fig:mergings_two_chains}
\end{figure}

\prettyref{fig:mergings_two_chains} shows the lattice of proper 
mergings of two $2$-chains, where the nodes are labeled by the corresponding proper mergings. 

\section{Proper Mergings of two Chains}
  \label{sec:chains}
In the first part of this article, we provide a closed formula for the number of proper mergings
of two chains. In particular, we give a bijective proof of the following theorem.

\begin{theorem}
  \label{thm:enumeration_chains}
	Let $m,n\in\mathbb{N}$ and let $\mathfrak{C}_{m,n}^{\bullet}$ denote the set of proper 
	mergings of an $n$-chain and an $m$-chain. Then,
	\begin{align*}
		\left\lvert\mathfrak{C}_{m,n}^{\bullet}\right\rvert=
		  \frac{1}{n+m+1}\binom{n+m+1}{m+1}\binom{n+m+1}{m}.
	\end{align*}
\end{theorem}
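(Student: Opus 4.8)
The plan is to give a bijective proof: construct an explicit bijection between $\mathfrak{C}_{m,n}^{\bullet}$ and the set of plane partitions with $m$ rows, $n$ columns and largest part at most $2$, and then evaluate the latter via MacMahon's box formula \eqref{eq:macmahon}. Throughout I would write $P=\{a_{1},\dots,a_{m}\}$ and $Q=\{b_{1},\dots,b_{n}\}$ for the two chains, so that a proper merging is a pair $(R,S)$ with $R\subseteq P\times Q$ and $S\subseteq Q\times P$ subject to the four conditions of \prettyref{prop:classification_mergings} together with $R\cap S^{-1}=\emptyset$.

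First I would record what \prettyref{prop:classification_mergings} says for chains. Since $P$ and $Q$ are chains, the contexts $(P,P,\not\rightarrow_{P})$ and $(Q,Q,\not\rightarrow_{Q})$ are contraordinal scales, whose intents and extents are exactly the principal up-sets and down-sets of the respective chain. Hence condition (1) forces every row $\{a_{i}\}^{R}$ of $R$ to be an up-set of $Q$ and every column a down-set of $P$; monotonicity of the associated thresholds then shows that the cross-region of $R$ is a staircase (Young-diagram) shape, and condition (2) gives the analogous statement, so that the cross-region of $S^{-1}$ is a staircase shape in the complementary corner.

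Next I would define the map to arrays. To $(R,S)$ associate the $m\times n$ array $w$ with $w(a_{i},b_{j})=2$ if $a_{i}\,R\,b_{j}$, with $w(a_{i},b_{j})=0$ if $b_{j}\,S\,a_{i}$, and with $w(a_{i},b_{j})=1$ otherwise. This is well defined precisely because properness $R\cap S^{-1}=\emptyset$ forbids a cell from being simultaneously forced to $2$ and to $0$. The core claim is that $w$ is a plane partition with parts in $\{0,1,2\}$ exactly when $(R,S)$ satisfies the remaining conditions: the staircase shapes coming from (1) and (2) make each of the regions $\{w=2\}$ and $\{w=0\}$ order-closed, while conditions (3) and (4), unwound through the definition of the relational product, assert precisely that a $2$ and a $0$ occurring in a common column (respectively a common row) are ordered compatibly with $\parr$ (respectively $\qarr$); together these are the weak-decrease conditions defining a plane partition. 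Conversely, from a plane partition $w$ one recovers $R=\{(a_{i},b_{j}):w(a_{i},b_{j})=2\}$ and $S=\{(b_{j},a_{i}):w(a_{i},b_{j})=0\}$ and checks all four conditions and properness. I expect the verification of this equivalence in both directions, with all the staircase and monotonicity translations, to be the main obstacle; once it is set up carefully the two directions are essentially symmetric.

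Finally, with the bijection in hand, $\lvert\mathfrak{C}_{m,n}^{\bullet}\rvert$ equals the number of plane partitions fitting in an $m\times n\times 2$ box. MacMahon's formula \eqref{eq:macmahon} evaluates this as $\prod_{i=1}^{m}\prod_{j=1}^{n}\frac{i+j+1}{i+j-1}$, and a short telescoping of the inner product (writing each factor as a ratio of factorials) reduces it to $\frac{(n+m+1)!\,(n+m)!}{n!\,(n+1)!\,m!\,(m+1)!}$, which is exactly the claimed value $\frac{1}{n+m+1}\binom{n+m+1}{m+1}\binom{n+m+1}{m}$. This last computation is purely routine.
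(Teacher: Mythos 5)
Your overall route is exactly the paper's: encode a proper merging as a $\{0,1,2\}$-array, identify those arrays with plane partitions in an $m\times n\times 2$ box, and evaluate via MacMahon's formula \eqref{eq:macmahon}; your closing telescoping computation is also correct. However, the dictionary as you define it is mis-oriented, and your core claim is false as stated. With $w(a_{i},b_{j})=2$ iff $a_{i}\,R\,b_{j}$ and $w(a_{i},b_{j})=0$ iff $b_{j}\,S\,a_{i}$, condition (1) of \prettyref{prop:classification_mergings} forces each row $\{a_{i}\}^{R}$ to be an intent of the contraordinal scale of $Q$, i.e.\ an up-set $\{b_{k},\ldots,b_{n}\}$, and each column $\{b_{j}\}^{R}$ to be a down-set of $P$ (dually for $S$). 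So the $2$'s of $w$ form a staircase in the upper \emph{right} corner and the $0$'s one in the lower \emph{left}. Unwinding condition (4) confirms this: $(b_{i},b_{j})\in S\circ R$ means some row of $w$ has a $0$ in column $i$ and a $2$ in column $j$, and containment in $\leq_{Q}$ says precisely that $0$'s lie to the left of $2$'s in every row. Hence the arrays you produce are weakly \emph{increasing} along rows (and weakly decreasing down columns); they are not plane partitions. Concretely, for a $1$-chain and a $2$-chain, $R=\{(a_{1},b_{2})\}$, $S=\{(b_{1},a_{1})\}$ is a proper merging whose array is $(0\;2)$, which is not a plane partition; conversely, the plane partition $(2\;0)$ yields under your inverse map $R=\{(a_{1},b_{1})\}$, $S=\{(b_{2},a_{1})\}$, which is not even a merging, since $\{a_{1}\}^{R}=\{b_{1}\}$ is not an up-set of $Q$ and $(b_{2},b_{1})\in S\circ R$ violates condition (4).

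This is precisely why the paper's \prettyref{def:bijection_chains} reverses the column index, pairing the entry $\pi_{i,j}$ with the pair $(a_{i},b_{n-j+1})$ rather than $(a_{i},b_{j})$: composing your map with the reversal of the column order turns your row-increasing, column-decreasing arrays into genuine plane partitions, after which everything you outline goes through, and since that reversal is a bijection the enumeration is unaffected. So the defect is repaired by a one-line change, but as written the central equivalence you planned to verify fails in both directions, and the verification you deferred as "the main obstacle" would have exposed exactly this.
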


In addition, we exploit the bijection constructed in this section to count the number of Galois 
connections between two chains. 

We start with some definitions. Let $C=\{c_{1},c_{2},\ldots,c_{n}\}$ be a set. Consider the $n$-chain
$(C,\leq)$, where the order $\leq$ is indicated by the indices, namely $c_{i}\leq c_{j}$ if and only
if $i\leq j$. In the remainder of this section, we abbreviate the poset $(C,\leq)$ by $\cc$. The
corresponding formal context $(C,C,\leq)$ will be denoted by $\KK(\cc)$. The formal context 
$(C,C,\not\geq)$ -- the so-called \emph{contraordinal scale of $\cc$} -- will be denoted by
$\CC(\cc)$.

\subsection{Intents and Extents of $\CC(\cc)$}
  \label{sec:intents_extents}
If $\cc=(C,\leq)$ is an $n$-chain, we can convince ourselves that we can write the corresponding 
cross-table of $\KK(\cc)$ in a triangular shape, as indicated in \prettyref{fig:chain_context}. 
Since the elements in $\cc$ are pairwise comparable, we have for all $c,c'\in C$ that $c\not\geq c'$ 
if and only if $c<c'$. Hence, the cross-table of the context $\CC(\cc)$ is that of $\KK(\cc)$ 
without crosses on the main diagonal. Thus, for every $i\in\{2,3,\ldots,n\}$ the set 
$\{c_{i},c_{i+1},\ldots,c_{n}\}$ is a row (and thus an intent) of $\CC(\cc)$. At the same time, 
for every $i\in\{1,2,\ldots,n-1\}$, the set $\{c_{1},c_{2},\ldots,c_{i}\}$ is a column (and thus 
an extent) of $\CC(\cc)$. By definition, the empty set and $C$ itself are both intents and extents 
of $\CC(\cc)$. (This follows, since the empty set is an extent (intent) of a formal context if and 
only if there is no full row (column). The set of objects (attributes) is an extent (intent) of 
every formal context.) This means that for every $i\in\{0,1,\ldots,n\}$, the set 
$\{c_{1},c_{2},\ldots,c_{i}\}$ is an extent of $\CC(\cc)$, and hence $\CL\bigl(\CC(\cc)\bigr)$ is 
isomorphic to an $n+1$-chain. (The case $i=0$ is to be interpreted as the empty set.) See 
\prettyref{fig:contraordinal_context} for an illustration.

\begin{figure}
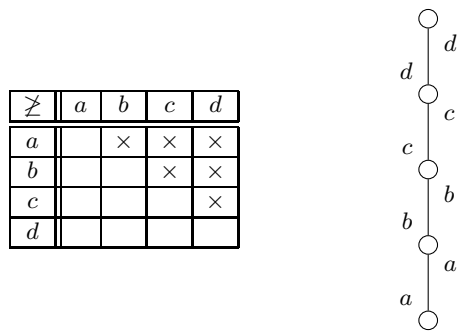

	\centering
	\contraordinalChain
	\caption{The formal context of the contraordinal scale of the $4$-chain from 
	  \prettyref{fig:chain_context}. The concept lattice of this formal context is the 
	  $5$-chain on the right.}
	\label{fig:contraordinal_context}
\end{figure}

\subsection{A Bijection between Plane Partitions and Proper Mergings of Two Chains}
  \label{sec:bijection_chains}
Let us recall that a \emph{plane partition} $\pi=(\pi_{i,j})_{i,j\geq 1}$ is an 
array of nonnegative integers that is weakly decreasing along rows and columns and has only finitely 
many nonzero entries. An entry $\pi_{i,j}$ is called \emph{part of $\pi$}. (We refer the reader to 
\cite{stanley01enumerative}*{Sections~7.20~and~7.21} for more information on plane partitions.)
The next definition is central for this section.

\begin{definition}
  \label{def:bijection_chains}
	Let $C_{1}=\{a_{1},a_{2},\ldots,a_{m}\}$, and $C_{2}=\{b_{1},b_{2},\ldots,b_{n}\}$ be sets. 
	Consider the chains $\cc_{1}=(C_{1},\leq_{1})$, and $\cc_{2}=(C_{2},\leq_{2})$, where the
	order relations are determined by the indices of the corresponding sets. Let $\pi$ be a plane 
	partition with $m$ rows, $n$ columns, and largest part at most $2$. Define relations 
	$R_{\pi}\subseteq C_{1}\times C_{2}$ and $S_{\pi}\subseteq C_{2}\times C_{1}$ by 
	\begin{align}
	  \label{eq:mergings_r}
		a_{i}\;R_{\pi}\;b_{n-j+1} & \quad\text{if and only if}\quad \pi_{i,j}=2,\;\text{and}\\
	  \label{eq:mergings_s}
		b_{n-j+1}\;S_{\pi}\;a_{i} & \quad\text{if and only if}\quad \pi_{i,j}=0,
	\end{align}
	where $1\leq i\leq m$ and $1\leq j\leq n$.
\end{definition}

\begin{figure}
	\centering
	\exampleOnePartition
	\caption{A plane partition with five rows, six columns and largest part $2$.}
	\label{fig:plane_partition}
\end{figure}

\begin{figure}
	\centering
	\begin{tikzpicture}
		\draw(0,0) node{\exampleOneRTable};
		\draw(6,0) node{\exampleOneSTable};
	\end{tikzpicture}
	\caption{The relations $R$ and $S$ induced by the plane partition in 
	  \prettyref{fig:plane_partition}.}
	\label{fig:merging_from_partition_cxt}
\end{figure}

\prettyref{fig:plane_partition} shows a plane partition with five rows, six columns, and largest part 
$2$. \prettyref{fig:merging_from_partition_cxt} shows the corresponding relations $R$ and $S$ in the
sense of the previous definition.

\begin{lemma}
  \label{lem:create_merging_chains}
	The relations $R_{\pi}$ and $S_{\pi}$ from \prettyref{def:bijection_chains} form a 
	proper merging of $\cc_{1}$ and $\cc_{2}$.
\end{lemma}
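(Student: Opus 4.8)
The plan is to verify the four conditions of \prettyref{prop:classification_mergings} together with the properness condition $R_{\pi}\cap S_{\pi}^{-1}=\emptyset$; since $\cc_{1}$ and $\cc_{2}$ are posets, and hence antisymmetric, this is precisely what is needed for $(R_{\pi},S_{\pi})$ to be a proper merging. Properness is immediate: a pair $(a_{i},b_{n-j+1})$ lies in $R_{\pi}$ exactly when $\pi_{i,j}=2$ and in $S_{\pi}^{-1}$ exactly when $\pi_{i,j}=0$, and these two conditions are mutually exclusive, so the relations are disjoint.

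The bulk of the argument is checking that $R_{\pi}$ and $S_{\pi}$ are bonds, and here I would invoke the description of the intents and extents of a contraordinal scale established in \prettyref{sec:intents_extents}: the intents of $\CC(\cc)$ are exactly the up-sets $\{c_{i},\ldots,c_{n}\}$ (together with $\emptyset$), and the extents are exactly the down-sets $\{c_{1},\ldots,c_{i}\}$. The strategy is then to read each row and column of $R_{\pi}$ and $S_{\pi}$ off the monotonicity of $\pi$. For a fixed object $a_{i}$, weak decrease along the $i$-th row forces $\{j : \pi_{i,j}=2\}$ to be an initial segment $\{1,\ldots,k\}$, so that $\{a_{i}\}^{R_{\pi}}=\{b_{n-k+1},\ldots,b_{n}\}$ is an up-set of $\cc_{2}$, hence an intent of $\CC(\cc_{2})$; dually, weak decrease down a column forces each $\{b_{\ell}\}^{R_{\pi}}$ to be an initial segment $\{a_{1},\ldots,a_{r}\}$, hence an extent of $\CC(\cc_{1})$. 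The identical computation for $S_{\pi}$, now tracking the entries equal to $0$ instead of $2$, produces final segments, which under the index reversal $b_{n-j+1}$ become the complementary up-sets and down-sets; this establishes that $S_{\pi}$ is a bond as well.

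It remains to verify that the two relational products land in the correct orders. A pair $(a_{i},a_{i'})$ lies in $R_{\pi}\circ S_{\pi}$ iff there is a common column index $j$ with $\pi_{i,j}=2$ and $\pi_{i',j}=0$; weak decrease down column $j$ then forces $i<i'$, whence $a_{i}\leq_{1}a_{i'}$ and $R_{\pi}\circ S_{\pi}\subseteq\,\leq_{1}$. Symmetrically, $(b_{\ell},b_{\ell'})\in S_{\pi}\circ R_{\pi}$ requires a common row index $i$ with $\pi_{i,n-\ell+1}=0$ and $\pi_{i,n-\ell'+1}=2$; weak decrease along row $i$ forces $n-\ell'+1<n-\ell+1$, that is $\ell<\ell'$, so that $b_{\ell}\leq_{2}b_{\ell'}$ and $S_{\pi}\circ R_{\pi}\subseteq\,\leq_{2}$. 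With all four conditions of \prettyref{prop:classification_mergings} and properness established, the proof is complete.

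The step demanding the most care is the consistent handling of the column reversal $b_{n-j+1}$: it turns initial segments of column indices into up-sets and final segments into down-sets, and in the relational-product computations it is exactly what converts the inequality $j'<j$ coming from the monotonicity of $\pi$ into the correct inequality $\ell<\ell'$ in $\cc_{2}$. Getting this orientation right, rather than accidentally producing the dual orders $\geq_{1}$ or $\geq_{2}$, is the only genuine subtlety; everything else is a direct translation of the weak-decrease property of plane partitions into the up-set/down-set structure of the contraordinal scales.
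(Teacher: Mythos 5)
Your proof is correct and takes essentially the same approach as the paper: both verify the four conditions of \prettyref{prop:classification_mergings} plus properness by translating the weak-decrease property of $\pi$ into the up-set/down-set description of the intents and extents of the contraordinal scales established in \prettyref{sec:intents_extents}. The differences are purely cosmetic—you check properness and the bond conditions before the relational products, whereas the paper does the products first—and your explicit tracking of the column reversal $b_{n-j+1}$ is, if anything, slightly more careful than the paper's.
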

\begin{proof}
	It is sufficient to prove that $R_{\pi}$ and $S_{\pi}$ satisfy the conditions (1)--(4) in
	\prettyref{prop:classification_mergings}. First, let $a_{i},a_{j}\in C_{1}$, with 
	$(a_{i},a_{j})\in R_{\pi}\circ S_{\pi}$. By definition, there must be some $b_{k}\in C_{2}$ 
	satisfying $\pi_{i,k}=2$ and $\pi_{j,k}=0$. Since $\pi$ is a plane partition (and hence weakly
	decreasing along the columns), we can conclude that $i<j$, and hence $a_{i}<a_{j}$, which
	proves condition (3). Now let $b_{i},b_{j}\in C_{2}$ with 
	$(b_{i},b_{j})\in S_{\pi}\circ R_{\pi}$. By definition, there must be some $a_{k}\in C_{1}$ 
	satisfying $\pi_{k,n-i+1}=0$ and $\pi_{k,n-j+1}=2$. Again we can conclude that $i<j$, and thus 
	$b_{i}<b_{j}$, which proves condition (4).

	Now we need to show that $R_{\pi}$ is a bond from $\CC(\cc_{1})$ to $\CC(\cc_{2})$ and 
	$S_{\pi}$ is a bond from $\CC(\cc_{2})$ to $\CC(\cc_{1})$. Hence, we need to show that every 
	row in $R_{\pi}$ is an intent of $\CC(\cc_{2})$, and every column in $R_{\pi}$ is an extent 
	of $\CC(\cc_{1})$. First we notice that for every $i\in\{1,2,\ldots,n\}$, the set 
	$\{a_{i}\}^{R_{\pi}}$ consists of all $b_{j}\in C_{2}$ such that $\pi_{i,j}=2$. Since, $\pi$ 
	is a plane partition, we can conclude that $\{a_{i}\}^{R_{\pi}}$ is of the form 
	$\{b_{k},b_{k+1},\ldots,b_{n}\}$ for some $k\in\{1,2,\ldots,n+1\}$. (The case $k=n+1$ is to be 
	interpreted as the empty set.) The reasoning in the beginning of this section shows that each 
	such set is indeed an intent of $\CC(\cc_{2})$. Similarly, we see that for every $b\in C_{2}$, 
	the set $\{b\}^{R_{\pi}}$ is of the form $\{a_{1},a_{2},\ldots,a_{k}\}$ for some 
	$k\in\{0,1,\ldots,n\}$. (The case $k=0$ is to be interpreted as the empty set.) By the same 
	argument as before, we see that these indeed are extents of $\CC(\cc_{2})$, which proves 
	condition (1). To show that $S_{\pi}$ is a bond from $\CC(\cc_{2})$ to $\CC(\cc_{1})$, we 
	notice that the rows in $S_{\pi}$ must correspond to intents of $\CC(\cc_{1})$ and the columns 
	of $S_{\pi}$ must correspond to extents of $\CC(\cc_{2})$. Thus, condition $(2)$ can be shown 
	analogously to the previous case. 
	
	Finally, since every cell is labeled by a unique value, we can conclude that 
	$R_{\pi}\cap S_{\pi}^{-1}=\emptyset$, which makes $(R_{\pi},S_{\pi})$ a proper merging of
	$\cc_{1}$ and $\cc_{2}$.
\end{proof}

\begin{figure}
	\centering
	\exampleOnePoset
	\caption{The proper merging of a $5$-chain and a $6$-chain defined by 
	  the relations given in \prettyref{fig:merging_from_partition_cxt}.}
	\label{fig:merging_from_partition}      
\end{figure}

\prettyref{fig:merging_from_partition} shows the poset corresponding to the proper merging shown in
\prettyref{fig:merging_from_partition_cxt}. We can conclude the following theorem.

\begin{theorem}
  \label{thm:bijection_chains}
	Let $\text{PP}_{m,n}^{(2)}$ denote the set of plane partitions with $m$ rows, $n$ columns and 
	largest part at most $2$. Let $\mathfrak{C}_{m,n}^{\bullet}$ denote the set of proper mergings 
	of an $m$-chain and an $n$-chain. Then, the correspondence described in 
	\prettyref{def:bijection_chains} is a bijection between $\text{PP}_{m,n}^{(2)}$ and 
	$\mathfrak{C}_{m,n}^{\bullet}$.
\end{theorem}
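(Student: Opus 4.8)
The plan is to show that the forward map $\Phi\colon\pi\mapsto(R_{\pi},S_{\pi})$ of \prettyref{def:bijection_chains} is a bijection by exhibiting an explicit two-sided inverse $\Psi$. By \prettyref{lem:create_merging_chains} the map $\Phi$ already sends $\text{PP}_{m,n}^{(2)}$ into $\mathfrak{C}_{m,n}^{\bullet}$, so it remains only to invert it.

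Injectivity is immediate: since $\pi_{i,j}\in\{0,1,2\}$, the three cases governing \eqref{eq:mergings_r} and \eqref{eq:mergings_s} are mutually exclusive, so every entry is recovered from the pair $(R_{\pi},S_{\pi})$ by reading $\pi_{i,j}=2$ off $R_{\pi}$, reading $\pi_{i,j}=0$ off $S_{\pi}$, and setting $\pi_{i,j}=1$ otherwise. This reading recipe is exactly the candidate inverse $\Psi$: given a proper merging $(R,S)$, I define an array $\pi$ by setting $\pi_{i,j}=2$ if $a_i\,R\,b_{n-j+1}$, setting $\pi_{i,j}=0$ if $b_{n-j+1}\,S\,a_i$, and setting $\pi_{i,j}=1$ otherwise. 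Once $\Psi$ is shown to take values in $\text{PP}_{m,n}^{(2)}$, both identities $\Psi\circ\Phi=\mathrm{id}$ and $\Phi\circ\Psi=\mathrm{id}$ hold cell by cell directly from the two definitions.

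The crux is therefore to verify that $\pi=\Psi(R,S)$ is a genuine plane partition, i.e.\ that it is weakly decreasing along rows and down columns (the entries already lie in $\{0,1,2\}$ and the array is finite). Here I would invoke the description of the intents and extents of the contraordinal scale from \prettyref{sec:intents_extents}: every intent of $\CC(\cc_2)$ has the upper-segment form $\{b_k,\dots,b_n\}$ and every extent the lower-segment form $\{b_1,\dots,b_k\}$, and likewise for $\cc_1$. Because $(R,S)$ is a merging, \prettyref{prop:classification_mergings}~(1) makes each row $\{a_i\}^{R}$ an intent and each column $\{b_t\}^{R}$ an extent; after the order-reversing reindexing $j\mapsto n-j+1$ this forces the $2$'s in each row of $\pi$ to be left-justified and the $2$'s in each column to be top-justified. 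Dually, \prettyref{prop:classification_mergings}~(2) forces the $0$'s in each row to be right-justified and in each column bottom-justified. Finally, properness $R\cap S^{-1}=\emptyset$ guarantees that no cell is simultaneously forced to carry a $2$ and a $0$, so within each row the (prefix) block of $2$'s lies strictly left of the (suffix) block of $0$'s, and within each column the block of $2$'s lies strictly above the block of $0$'s. Each row thus reads $2\cdots2\,1\cdots1\,0\cdots0$ and each column likewise, which is precisely weak monotonicity in both directions.

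I expect the main obstacle to be exactly this monotonicity verification: one must track the reindexing $b_j\leftrightarrow b_{n-j+1}$ carefully and correctly match \emph{intent} with \emph{upper segment} and \emph{extent} with \emph{lower segment} for both $R$ and $S$, so that the four justification statements assemble into the two weak-decrease conditions. Conditions (3) and (4) of \prettyref{prop:classification_mergings} play no part here; they hold automatically because we start from an actual proper merging, but the plane-partition shape of $\pi$ is controlled entirely by the bond conditions (1), (2) together with properness.
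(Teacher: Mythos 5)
Your proposal is correct and follows essentially the same route as the paper: the forward direction is delegated to \prettyref{lem:create_merging_chains}, and the inverse is the identical cell-reading map ($2$ read off $R$, $0$ read off $S$, $1$ otherwise), shown to land in $\text{PP}_{m,n}^{(2)}$ by matching rows/columns of the relations with the upper-segment intents and lower-segment extents of the contraordinal scales. The only divergence is that the paper additionally invokes conditions (3) and (4) of \prettyref{prop:classification_mergings} to place every $2$ above the $0$'s in its column and every $0$ to the right of the $2$'s in its row, whereas you derive this ordering from properness together with the prefix/suffix structure alone; your observation that (3) and (4) are redundant for this verification is correct, since disjoint prefix and suffix blocks in a line of entries already force the pattern $2\cdots 2\,1\cdots 1\,0\cdots 0$.
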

\begin{proof}
	\prettyref{lem:create_merging_chains} makes immediately clear that each such plane partition 
	induces a proper merging of an $m$-chain and an $n$-chain. 

	Conversely, let $(R,S)$ be a proper merging of an $m$-chain and an $n$-chain. Let 
	$\pi_{(R,S)}$ be the $(m\times n)$-array, whose parts $\pi_{i,j}$ are defined by
	\begin{align*}
		\pi_{i,j}=\begin{cases}
			0, & \text{if}\;b_{n-j+1}\;S\;a_{i},\\
			2, & \text{if}\;a_{i}\;R\;b_{n-j+1},\\
			1, & \text{otherwise}.
		\end{cases}
	\end{align*}
	Since $(R,S)$ is a proper merging, no cell is labeled twice. Condition~(1) in 
	\prettyref{prop:classification_mergings}  implies that if more than one $2$ appears in a row 
	or column of $\pi_{(R,S)}$, these $2$'s appear consecutively. Moreover, it follows that a row 
	(or column), which contains a $2$, contains a $2$ in its first cell. Condition~(2) in 
	\prettyref{prop:classification_mergings} implies the analogous properties for $0$'s, in 
	particular that a row (or column) that contains a $0$, contains a $0$ in its last cell. 
	Condition~(3) in \prettyref{prop:classification_mergings} implies that every $2$ in a column of
	$\pi_{(R,S)}$ appears above a $0$, and condition~(4) in \prettyref{prop:classification_mergings}
	implies that every $0$ in a row of $\pi_{(R,S)}$ appears to the right of a $2$. Hence, 
	$\pi_{(R,S)}$ is a plane partition with $m$ rows, $n$ columns and largest part at most $2$.
\end{proof}

An extensive illustration of this bijection can be found in \prettyref{app:bijection_chains}.

\subsection{The Number of Proper Mergings of Two Chains}
  \label{sec:counting_mergings_chains}
Having the bijection from the previous section in mind, it is now straight-forward to determine the 
number of proper mergings of two chains. Let us recall a classical result by MacMahon.
\begin{theorem}
  \label{thm:macmahon}
	Let $l,m,n\in\mathbb{N}$. The number $\pi(m,n,l)$ of plane partitions with $m$ rows, $n$ 
	columns and largest part at most $l$ is given by
	\begin{align}
	  \label{eq:macmahon}
		\pi(m,n,l)=\prod_{i=1}^{m}\prod_{j=1}^{n}\prod_{k=1}^{l}{\frac{i+j+k-1}{i+j+k-2}}.
	\end{align}
\end{theorem}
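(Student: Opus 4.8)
The plan is to prove MacMahon's formula by translating plane partitions into non-intersecting lattice paths and evaluating the resulting determinant via the Lindstr\"om--Gessel--Viennot (LGV) lemma. As a preliminary simplification, I would first observe that the innermost product telescopes,
\[
  \prod_{k=1}^{l}\frac{i+j+k-1}{i+j+k-2}=\frac{i+j+l-1}{i+j-1},
\]
so that the asserted count is equivalently $\prod_{i=1}^{m}\prod_{j=1}^{n}\frac{i+j+l-1}{i+j-1}$; this closed form is what I would ultimately aim to match. The manifest symmetry of the original triple product in $m,n,l$ reflects the three inequivalent ways of slicing a plane partition (by rows, by columns, or by level), so I am free to use whichever slicing is most convenient.

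Next I would set up the combinatorial model. Slicing $\pi$ by level, the regions $\{(i,j):\pi_{i,j}\geq k\}$ for $k=1,\dots,l$ form a nested chain $\lambda^{(1)}\supseteq\lambda^{(2)}\supseteq\cdots\supseteq\lambda^{(l)}$ of Young diagrams, each contained in the $m\times n$ rectangle. The boundary of each $\lambda^{(k)}$ is a monotone lattice path using $m$ vertical and $n$ horizontal unit steps, and the nesting condition says exactly that these $l$ paths do not cross. After translating path $k$ by a fixed offset (turning crossings into shared vertices), the chain becomes a family of $l$ vertex-disjoint paths from sources $A_{1},\dots,A_{l}$ to sinks $B_{1},\dots,B_{l}$. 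The LGV lemma then expresses the number of such families as a determinant whose $(i,j)$-entry counts single paths from $A_{i}$ to $B_{j}$; since the number of lattice paths between two points is a single binomial coefficient, this yields
\[
  \pi(m,n,l)=\det_{1\leq i,j\leq l}\!\left[\binom{m+n}{\,n-i+j\,}\right].
\]
I would pin down the offsets and index conventions by checking the small cases $m=n=l=1$ and $m=n=1,\,l=2$ directly, where the determinant evaluates to $2$ and $3$, matching the formula.

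The remaining and, I expect, principal difficulty is to evaluate this Toeplitz-type binomial determinant in closed form. I would attack it by standard determinant-calculus manipulations: factor common terms out of each row to expose a matrix whose entries are polynomials of controlled degree in the column index, then reduce it by elementary column operations to a triangular or Vandermonde-like shape; alternatively, proceed by induction on $l$ using a recurrence obtained after splitting off a boundary row and column. Either route should produce exactly $\prod_{i,j}\frac{i+j+l-1}{i+j-1}$, which by the telescoping identity is MacMahon's product, completing the proof. Should the direct evaluation prove unwieldy, a reliable fallback is to establish the $q$-analogue first---replacing each entry by its Gaussian binomial and each factor of the product by a ratio of $q$-integers, via the Schur-function and Cauchy-identity description of box plane partitions---and then specialize $q\to1$, which recovers the stated formula factor by factor.
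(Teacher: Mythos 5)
The paper does not prove this theorem at all: it quotes MacMahon's formula as a classical result, pointing to MacMahon's \emph{Combinatory Analysis} for the original proof and to Macdonald's book for the stated product form, and then only uses the case $l=2$ to deduce \prettyref{thm:enumeration_chains}. So any self-contained argument is automatically a different route from the paper's. Yours is the standard modern one: slice $\pi$ into level sets to get a nested chain of $l$ partitions in the $m\times n$ box, encode boundaries as lattice paths, offset diagonally so that nesting becomes vertex-disjointness, and apply Lindstr\"om--Gessel--Viennot. That reduction is correct, including the resulting determinant
\begin{align*}
	\pi(m,n,l)=\det_{1\leq i,j\leq l}\binom{m+n}{n-i+j},
\end{align*}
your telescoping of the inner product to $\prod_{i=1}^{m}\prod_{j=1}^{n}\tfrac{i+j+l-1}{i+j-1}$, and your small-case checks. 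Relative to the paper's bare citation, your approach buys a self-contained combinatorial proof; what it costs is a nontrivial determinant evaluation, which is where all the real work sits.

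That evaluation is the one genuine gap as written: you assert that row factorizations and column operations ``should'' produce $\prod_{i,j}\tfrac{i+j+l-1}{i+j-1}$, but you never carry this out, and it is not routine bookkeeping --- after the (comparatively easy) LGV reduction, this step \emph{is} the content of MacMahon's theorem. To close it you would either execute the standard polynomial--Vandermonde method (write $\binom{m+n}{n-i+j}=\tfrac{(m+n)!}{(n-i+j)!\,(m+i-j)!}$, factor rows so the entries become polynomials in $i$ of degree $l-1$, extract a Vandermonde determinant, and track the constants), or invoke a known determinant evaluation from the determinant-calculus literature, or push through the $q$-analogue via principal specializations of Schur functions as you suggest and let $q\to 1$. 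Any of these works, and your fallback plan is sound; but until one of them is actually done, what you have is a correct reduction plus a credible plan, not yet a complete proof.
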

This result was first conjectured in \cite{macmahon97memoir} and later proven in 
\cite{macmahon16combinatory}*{Sections~XI~and~X}. The presented form can be derived from 
\cite{macdonald95symmetric}*{Example~13(b)}.

\begin{proof}[Proof of \prettyref{thm:enumeration_chains}]
	\prettyref{thm:bijection_chains} and \prettyref{thm:macmahon} imply
	\begin{align*}
		\left\lvert\mathfrak{C}_{m,n}^{\bullet}\right\rvert & = \pi(m,n,2)\\
		& = \prod_{i=1}^{m}\prod_{j=1}^{n}{\frac{i+j+1}{i+j-1}}\\
		& = \prod_{i=1}^{m}{\frac{i+m}{i}\cdot\frac{i+m+1}{i+1}}\\
		& = \frac{1}{m+n+1}\binom{m+n+1}{m+1}\binom{m+n+1}{m}.
	\end{align*}
\end{proof}

\begin{remark}
	Consider the \emph{Narayana numbers} (see \cite{stanley01enumerative}*{Exercise~6.36~a}), 
	defined by
	\begin{align}
	  \label{eq:narayana}
		\text{Nar}(\tilde{n},\tilde{m})=
		  \frac{1}{\tilde{n}}\binom{\tilde{n}}{\tilde{m}}\binom{\tilde{n}}{\tilde{m}-1},
	\end{align}
	for $\tilde{m},\tilde{n}\in\mathbb{N}$, with $\tilde{m}\leq\tilde{n}$. In view of 
	\prettyref{thm:bijection_chains}, we obtain
	\begin{align*}
		\left\lvert\mathfrak{C}_{m,n}^{\bullet}\right\rvert = \text{Nar}(m+n+1,m+1).
	\end{align*}
\end{remark}

\begin{remark}
	Let $\pi=(\pi_{i,j})_{1\leq i\leq m,1\leq j\leq n}$ and 
	$\sigma=(\sigma_{i,j})_{1\leq i\leq m,1\leq j\leq n}$ be plane partitions with $m$ rows
	and $n$ columns, and largest part $2$. Define a partial order $\leq$ on 
	$\text{PP}_{m,n}^{(2)}$ as
	\begin{align*}
		\pi\leq\sigma\quad\text{if and only if}\quad \pi_{i,j}\leq\sigma_{i,j},
	\end{align*}
	for all $1\leq i\leq m$ and $1\leq j\leq n$. Let $(R_{\pi},S_{\pi})$, and
	$(R_{\sigma},S_{\sigma})$ denote the proper mergings associated to $\pi$ respectively $\sigma$ in
	the sense of \prettyref{def:bijection_chains}. Suppose that
	$(R_{\pi},S_{\pi})\preceq(R_{\sigma},S_{\sigma})$, and hence by definition 
	$R_{\pi}\subseteq R_{\sigma}$, and $S_{\pi}\supseteq S_{\sigma}$. This implies that if
	$\pi_{i,j}=2$, then $\sigma_{i,j}=2$. If $\pi_{i,j}=1$, then $\sigma_{i,j}\in\{1,2\}$, and if
	$\pi_{i,j}=0$, then $\sigma_{i,j}\in\{0,1,2\}$. Hence, $\pi\leq\sigma$. This means that the
	bijection described in \prettyref{thm:bijection_chains} is indeed an isomorphism between the 
	lattices $(\mathfrak{C}_{m,n}^{\bullet},\preceq)$ and $(\text{PP}_{m,n}^{(2)},\leq)$.
\end{remark}

\begin{figure}
	\begin{tikzpicture}
	\def\x{2.5};
	\def\y{1.75};
	\def\r{.8};
	\def\rs{.8};
	\draw(3*\x,1*\y) node[circle,draw,scale=\r](v1){};
	\draw(2*\x,2*\y) node[circle,draw,scale=\r](v2){};
	\draw(4*\x,2*\y) node[circle,draw,scale=\r](v3){};
	\draw(1*\x,3*\y) node[circle,draw,scale=\r](v4){};
	\draw(2*\x,3*\y) node[circle,draw,scale=\r](v5){};	
	\draw(3*\x,3*\y) node[circle,draw,scale=\r](v6){};
	\draw(4*\x,3*\y) node[circle,draw,scale=\r](v7){};
	\draw(5*\x,3*\y) node[circle,draw,scale=\r](v8){};
	\draw(1*\x,4*\y) node[circle,draw,scale=\r](v9){};
	\draw(3*\x,4*\y) node[circle,draw,scale=\r](v10){};
	\draw(5*\x,4*\y) node[circle,draw,scale=\r](v11){};
	\draw(1*\x,5*\y) node[circle,draw,scale=\r](v12){};
	\draw(2*\x,5*\y) node[circle,draw,scale=\r](v13){};
	\draw(3*\x,5*\y) node[circle,draw,scale=\r](v14){};
	\draw(4*\x,5*\y) node[circle,draw,scale=\r](v15){};
	\draw(5*\x,5*\y) node[circle,draw,scale=\r](v16){};
	\draw(2*\x,6*\y) node[circle,draw,scale=\r](v17){};
	\draw(4*\x,6*\y) node[circle,draw,scale=\r](v18){};
	\draw(3*\x,7*\y) node[circle,draw,scale=\r](v19){};
	\draw(v1) -- (v2);
	\draw(v1) -- (v3);
	\draw(v2) -- (v4);
	\draw(v2) -- (v5);
	\draw(v2) -- (v6);
	\draw(v3) -- (v6);
	\draw(v3) -- (v7);
	\draw(v3) -- (v8);
	\draw(v4) -- (v9);
	\draw(v5) -- (v9);
	\draw(v5) -- (v10);
	\draw(v6) -- (v10);
	\draw(v7) -- (v10);
	\draw(v7) -- (v11);
	\draw(v8) -- (v11);
	\draw(v9) -- (v12);
	\draw(v9) -- (v13);
	\draw(v10) -- (v13);
	\draw(v10) -- (v14);
	\draw(v10) -- (v15);
	\draw(v11) -- (v15);
	\draw(v11) -- (v16);
	\draw(v12) -- (v17);
	\draw(v13) -- (v17);
	\draw(v14) -- (v17);
	\draw(v14) -- (v18);
	\draw(v15) -- (v18);
	\draw(v16) -- (v18);
	\draw(v17) -- (v19);
	\draw(v18) -- (v19);
	\draw(3.33*\x,1*\y) node{\uOne{0}{\rs}};
	\draw(1.67*\x,1.75*\y) node{\uTwo{0}{\rs}};
	\draw(4.33*\x,1.75*\y) node{\uThree{0}{\rs}};
	\draw(.67*\x,3*\y) node{\uFour{0}{\rs}};
	\draw(1.67*\x,3*\y) node{\uFive{0}{\rs}};
	\draw(3*\x,2.67*\y) node{\uSix{0}{\rs}};
	\draw(4.33*\x,3*\y) node{\uSeven{0}{\rs}};
	\draw(5.33*\x,3*\y) node{\uEight{0}{\rs}};
	\draw(.67*\x,4*\y) node{\uNine{0}{\rs}};
	\draw(3.33*\x,4*\y) node{\uTen{0}{\rs}};
	\draw(5.33*\x,4*\y) node{\reflectbox{\uNine{180}{\rs}}};
	\draw(.67*\x,5*\y) node{\uEight{180}{\rs}};
	\draw(1.67*\x,5*\y) node{\uSeven{180}{\rs}};
	\draw(3*\x,5.33*\y) node{\uSix{180}{\rs}};
	\draw(4.33*\x,5*\y) node{\uFive{180}{\rs}};
	\draw(5.33*\x,5*\y) node{\uFour{180}{\rs}};
	\draw(1.67*\x,6.25*\y) node{\uThree{180}{\rs}};
	\draw(4.33*\x,6.25*\y) node{\reflectbox{\uTwo{180}{\rs}}};
	\draw(2.67*\x,7*\y) node{\uOne{180}{\rs}};
\end{tikzpicture}
	\caption{The lattice of proper mergings of three $1$-chains.}
	\label{fig:mergings_three_chains}
\end{figure}
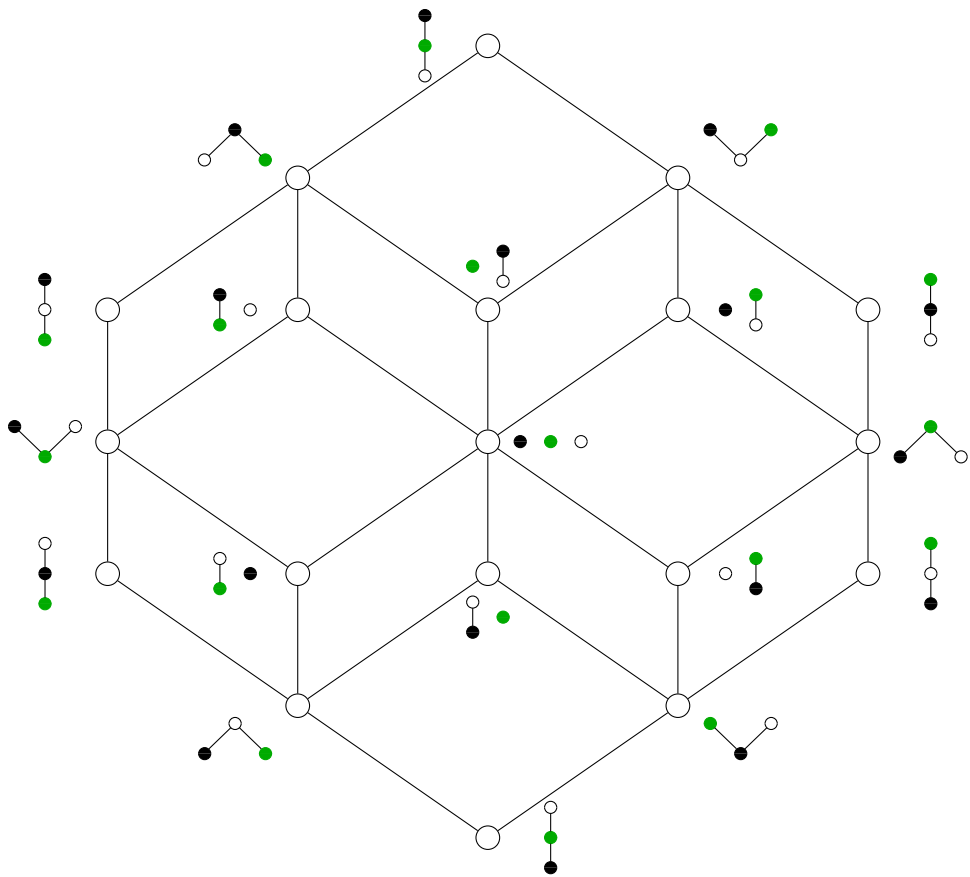

\begin{remark}
  \label{rem:generalized_bijection_chains}
	Christian Meschke proposed the following generalization of mergings of quasi-ordered sets: 
	let $T$ be a linearly ordered set, and let $(P_{t},\leftarrow_{t})_{t\in T}$ be a family of 
	quasi-ordered sets, indexed by $T$. Define $P=\bigcup_{t\in T}{P_{t}}$, and let 
	$R\subseteq P\times P$ be a relation on $P$. We abbreviate $R_{s,t}=R\cap(P_{s}\times P_{t})$.
	Then, $R$ is called \emph{merging of the $P_{t}$'s} if it is a quasi-ordered set on $P$ such 
	that $R_{t,t}$ yields $\leftarrow_{t}$ again. Moreover, $R$ is called \emph{proper} if for all
	$s<t$, we have $R_{s,t}\cap R_{t,s}^{-1}=\emptyset$. Let $\mathfrak{M}_{T}$ denote the set
	of all mergings of the $P_{t}$'s. We define a partial order $\sqsubseteq$ on 
	$\mathfrak{M}_{T}$ as 
	\begin{align*}
		R\sqsubseteq S\quad\text{if and only if}\quad
		\begin{cases}
		R_{s,t}\subseteq S_{s,t} & \quad\text{if}\;s<t,\quad\text{and}\\
		R_{s,t}\supseteq S_{s,t} & \quad\text{if}\;s>t,
		\end{cases}
	\end{align*}
	for all $R,S\in\mathfrak{M}_{T}$. Then, $(\mathfrak{M}_{T},\sqsubseteq)$ is again a lattice. 
	However, as we notice from \prettyref{fig:mergings_three_chains}, this lattice is in general no 
	longer distributive. Even more, up to now it is not clear, how to construct the formal 
	context which generates $(\mathfrak{M}_{T},\sqsubseteq)$ from the quasi-orders $\leftarrow_{t}$.
	
	We can now think of a generalization of the bijection described in 
	\prettyref{thm:bijection_chains} to proper mergings of more than two chains in the following 
	way: let $\cc_{1},\cc_{2},\ldots,\cc_{t}$ be chains, where for all $i\in\{1,2,\ldots,t\}$, the 
	chain $\cc_{i}$ has $n_{i}$ elements. Consider the standard unit vectors $e_{1},e_{2},\ldots,e_{t}$ 
	in $\mathbb{R}^{t}$, and label the points 
	$e_{j}-\tfrac{1}{2},2e_{j}-\tfrac{1}{2},\ldots,n_{j}e_{j}-\tfrac{1}{2}$ with the elements of the 
	chain $\cc_{j}$ for all $j\in\{1,2,\ldots,t\}$ in the obvious way. For each 
	$i,j\in\{1,2,\ldots,t\}$ with $i<j$, we can insert 
	a plane partition with largest part $\leq 2$ into the $(n_{i}\times n_{j})$-array, spanned by 
	the vectors $n_{i}e_{i}$ and $n_{j}e_{j}$, and call this an \emph{arrangement of $t$ plane 
	partitions}.
	
	For an illustration of this construction, we refer to \prettyref{fig:generalized_bijection}. 
	On the left of each figure, there is an arrangement of three plane partitions with one row and one
	column, together with the labeled coordinate axes. In the middle, the three plane 
	partitions are written next to each other and on the right, there is the merging of three 
	$1$-chains which is induced by these plane partitions in the spirit of 
	\prettyref{def:bijection_chains}. We notice that \prettyref{fig:generalized_proper} shows a 
	proper merging of the three $1$-chains, while \prettyref{fig:generalized_improper} does not. 
	See \prettyref{app:generalized_bijection_chains} for an extensive illustration of the case of 
	proper mergings of three $1$-chains. In this appendix, we also notice that some arrangements 
	of plane partitions yield the same mergings. 

	If this construction can indeed be used as a generalization of \prettyref{thm:bijection_chains} 
	should be investigated in a subsequent article.
\end{remark}

\begin{figure}
	\subfigure[An arrangement of three plane partitions, which yields a proper merging of three 
	  $1$-chains.]{\label{fig:generalized_proper}
		\begin{tikzpicture}
			\draw(0,0) node{\exampleFiveSolidPartition{$0$}{$0$}{$1$}};
			\draw(1.75,-.25) node{$\rightarrow$};
			\draw(3,0) node{\exampleFiveSmallPartition{$a$}{$b$}{$0$}};
			\draw(4.75,0) node{\exampleFiveSmallPartition{$b$}{$c$}{$0$}};
			\draw(6.5,0) node{\exampleFiveSmallPartition{$c$}{$a$}{$1$}};
			\draw(8,-.25) node{$\rightarrow$};
			\draw(9.5,0) node{\exampleFivePosetOne};
		\end{tikzpicture}
	}
	\subfigure[An arrangement of three plane partitions, which does not yield a proper merging of 
	  three $1$-chains.]{\label{fig:generalized_improper}
		\begin{tikzpicture}
			\draw(0,0) node{\exampleFiveSolidPartition{$0$}{$0$}{$0$}};
			\draw(1.75,-.25) node{$\rightarrow$};
			\draw(3,0) node{\exampleFiveSmallPartition{$a$}{$b$}{$0$}};
			\draw(4.75,0) node{\exampleFiveSmallPartition{$b$}{$c$}{$0$}};
			\draw(6.5,0) node{\exampleFiveSmallPartition{$c$}{$a$}{$0$}};
			\draw(8,-.25) node{$\rightarrow$};
			\draw(9.5,-.25) node{\exampleFivePosetTwo};
		\end{tikzpicture}
	}
	\caption{Two examples of posets induced by an arrangement of three plane partitions.}
	\label{fig:generalized_bijection}
\end{figure}
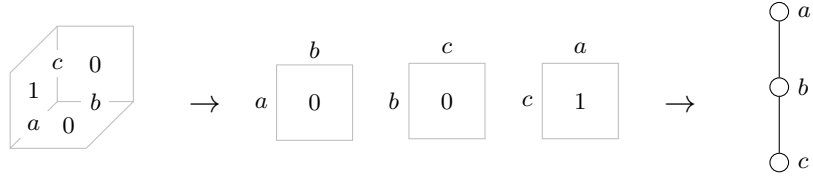
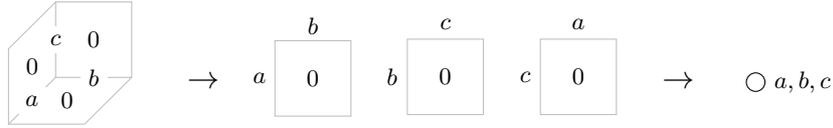

\subsection{Counting Galois Connections between Chains}
  \label{sec:galois_connections_chains}
In this section, we describe how we can exploit the bijection given in 
\prettyref{def:bijection_chains} to allow for counting Galois connections between 
two chains. Let us first recall the definitions. A \emph{Galois connection} between two posets 
$(P,\leq_{P})$ and $(Q,\leq_{Q})$ is a pair $(\varphi,\psi)$ of maps
\begin{align*}
	\varphi: P\to Q\quad\text{and}\quad\psi:Q\to P,
\end{align*}
satisfying 
\begin{align}
	& p_{1} \leq_{P} p_{2} \quad\text{implies}\quad\varphi p_{1}\geq_{Q}\varphi p_{2},\\
	& q_{1} \leq_{Q} q_{2} \quad\text{implies}\quad\psi q_{1}\geq_{P}\psi q_{2},\\
	& p\leq_{P}\psi\varphi p, \quad\text{and}\quad
	 q\leq_{Q}\varphi\psi q,
\end{align}
for all $p,p_{1},p_{2}\in P$ and $q,q_{1},q_{2}\in Q$. Now, let $(P,\leq_{P})\cong\CL(\KK_{1})$ and 
$(Q,\leq_{Q})\cong\CL(\KK_{2})$ be concept lattices, where $\KK_{1}=(G,M,I)$ and $\KK_{2}=(H,N,J)$ 
are the corresponding formal contexts. In this particular case, \prettyref{thm:galois_connections} 
below states that each Galois connection from $\CL(\KK_{1})$ to $\CL(\KK_{2})$ corresponds to a dual 
bond from $\KK_{1}$ to $\KK_{2}$. A relation $R\subseteq G\times H$, is called 
\emph{dual bond from $\KK_{1}$ to $\KK_{2}$} if for every $g\in G$, the set $\{g\}^{R}$ is an extent 
of $\KK_{2}$ and for every $h\in H$, the set $\{h\}^{R}$ is an extent of $\KK_{1}$. In other words, 
$R$ is a dual bond from $\KK_{1}$ to $\KK_{2}$ if and only if $R$ is a bond from $\KK_{1}$ to the 
dual\footnote{Let $\KK=(G,M,I)$ be a formal context. The dual context $\KK^{d}$ of $\KK$ is given by
$(M,G,I^{-1})$ and satisfies $\CL(\KK^{d})\cong\CL(\KK)^{d}$, where $\CL(\KK)^{d}$ is the 
(order-theoretic) dual of the lattice $\CL(\KK)$.} context $\KK_{2}^{d}$. 

\begin{theorem}[\cite{ganter99formal}*{Theorem~53}]
  \label{thm:galois_connections}
	Let $(G,M,I)$ and $(H,N,J)$ be formal contexts. For every dual bond $R\subseteq G\times H$, the 
	maps
	\begin{align*}
		\varphi_{R}\bigl(X,X^{I}\bigr)=\bigl(X^{R},X^{RJ}\bigr),\quad\text{and}\quad
		  \psi_{R}\bigl(Y,Y^{J}\bigr)=\bigl(Y^{R},Y^{RI}\bigr),
	\end{align*}
	where $X$ and $Y$ are extents of $(G,M,I)$ respectively $(H,N,J)$, form a Galois connection 
	between $\CL(G,M,I)$ and $\CL(H,N,J)$. Moreover, every Galois connection $(\varphi,\psi)$ 
	induces a dual bond from $(G,M,I)$ to $(H,N,J)$ by 
	\begin{align*}
		R_{(\varphi,\psi)}=\bigl\{(g,h)\mid\gamma g\leq\psi\gamma h\bigr\}=
		  \bigl\{(g,h)\mid\gamma h\leq\varphi\gamma g\bigr\},
	\end{align*}
	where $\gamma$ is the map defined in \eqref{eq:object}. We have
	\begin{align*}
		\varphi_{R_{(\varphi,\psi)}}=\varphi,\quad\psi_{R_{(\varphi,\psi)}}=\psi,\quad
		  \text{and}\quad R_{(\varphi_{R},\psi_{R})}=R.
	\end{align*}
\end{theorem}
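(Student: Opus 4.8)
The plan is to reduce everything to the elementary fact that, for an \emph{arbitrary} relation $R\subseteq G\times H$, the two derivation operators $(\cdot)^{R}$ form an antitone Galois connection between the power sets $\wp(G)$ and $\wp(H)$: they are inclusion-reversing and satisfy $X\subseteq X^{RR}$ and $Y\subseteq Y^{RR}$ for all $X\subseteq G$ and $Y\subseteq H$. The dual-bond hypothesis then serves only to guarantee that these operators restrict to the extent lattices, that is, that $\varphi_{R}$ and $\psi_{R}$ really send concepts to concepts. Once that is in place, the four defining inequalities of a Galois connection become direct transcriptions of the four properties just listed, via the fact that the order on $\CL(G,M,I)$ and on $\CL(H,N,J)$ is inclusion of extents.

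First I would verify that $(\varphi_{R},\psi_{R})$ is well defined. Writing $X^{R}=\bigcap_{g\in X}\{g\}^{R}$, each $\{g\}^{R}$ is an extent of $(H,N,J)$ by the definition of a dual bond, and extents are closed under intersection; hence $X^{R}$ is again an extent, so $\bigl(X^{R},X^{RJ}\bigr)$ is a genuine concept of $(H,N,J)$, and symmetrically for $\psi_{R}$. Antitonicity of $(\cdot)^{R}$ gives $X_{1}\subseteq X_{2}\Rightarrow X_{1}^{R}\supseteq X_{2}^{R}$, which is exactly the requirement that $\varphi_{R}$ reverse order, and the inclusions $X\subseteq X^{RR}$ and $Y\subseteq Y^{RR}$ are precisely $p\leq_{P}\psi_{R}\varphi_{R}p$ and $q\leq_{Q}\varphi_{R}\psi_{R}q$. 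This settles the first assertion.

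For the converse I would first record the standard adjunction valid in any antitone Galois connection $(\varphi,\psi)$: one has $\varphi a\geq b$ if and only if $a\leq\psi b$, proved by applying the antitone maps to the hypothesis and using $a\leq\psi\varphi a$ and $b\leq\varphi\psi b$. Taking $a=\gamma g$ and $b=\gamma h$ shows the two displayed descriptions of $R_{(\varphi,\psi)}$ coincide. Next I would identify the rows of $R_{(\varphi,\psi)}$: since $\gamma h\leq\varphi\gamma g$ holds exactly when $h$ lies in the extent of $\varphi\gamma g$, the set $\{g\}^{R_{(\varphi,\psi)}}$ equals that extent, an extent of $(H,N,J)$; dually $\{h\}^{R_{(\varphi,\psi)}}$ is the extent of $\psi\gamma h$, an extent of $(G,M,I)$. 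Thus $R_{(\varphi,\psi)}$ is indeed a dual bond.

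It remains to check the two round trips, and this is where the real work sits. For $R_{(\varphi_{R},\psi_{R})}=R$ I would use the \emph{factoring lemma} for dual bonds: for every $X\subseteq G$ one has $X^{R}=X^{IIR}$, because any $h\in X^{R}$ satisfies $X\subseteq\{h\}^{R}$, and $\{h\}^{R}$ is an $I$-extent, hence already contains the closure $X^{II}$. In particular the extent of $\varphi_{R}\gamma g$ is $(\{g\}^{II})^{R}=\{g\}^{R}$, so $\gamma h\leq\varphi_{R}\gamma g$ iff $(g,h)\in R$, which gives $R_{(\varphi_{R},\psi_{R})}=R$. For $\varphi_{R_{(\varphi,\psi)}}=\varphi$ I would use that every concept is the join of the object concepts in its extent, $(X,X^{I})=\bigvee_{g\in X}\gamma g$ (the basic theorem of FCA, \cite{ganter99formal}*{Theorem~3}), together with the fact that an antitone Galois map sends joins to meets. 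Then the extent of $\varphi(X,X^{I})$ equals $\bigcap_{g\in X}\{g\}^{R}=X^{R}$, using that $\{g\}^{R}$ is the extent of $\varphi\gamma g$ and that meets in the concept lattice are intersections of extents; this is exactly the extent of $\varphi_{R}(X,X^{I})$, and since concepts are determined by their extents we conclude $\varphi_{R}=\varphi$, and dually $\psi_{R}=\psi$. The main obstacles are precisely these last two identities: the factoring lemma $X^{R}=X^{IIR}$, which is where the column condition of a dual bond is genuinely used, and the join-to-meet property of Galois maps, which is what upgrades pointwise agreement on object concepts to agreement on the entire lattice.
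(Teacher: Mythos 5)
Your proposal is correct, but there is nothing in the paper to compare it against: the paper imports this statement verbatim from \cite{ganter99formal}*{Theorem~53} and uses it as a black box, giving no proof of its own. What you have written is a sound, essentially complete reconstruction of the standard argument, and you have isolated the right pressure points. The first direction is exactly as easy as you say: $X^{R}=\bigcap_{g\in X}\{g\}^{R}$ is an intersection of extents of $(H,N,J)$, extents are closed under intersection, so $\varphi_{R}$ and $\psi_{R}$ are well defined, and the three axioms of a Galois connection are then just the antitone power-set Galois connection of an arbitrary relation read through the identification of the concept order with inclusion of extents. In the converse direction, the adjunction $b\leq\varphi a\iff a\leq\psi b$ correctly shows the two descriptions of $R_{(\varphi,\psi)}$ agree, and the observation that $\gamma h\leq(A,B)$ holds iff $h\in A$ identifies $\{g\}^{R_{(\varphi,\psi)}}$ with the extent of $\varphi\gamma g$, giving the dual-bond property. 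The two round trips are where the content lies, and both of your key lemmas are valid: the factoring identity $X^{R}=X^{IIR}$ does follow from the column condition (each $\{h\}^{R}$ is $II$-closed, hence contains $X^{II}$ whenever it contains $X$), and it yields $R_{(\varphi_{R},\psi_{R})}=R$; join-density of the object concepts, the fact that an antitone Galois map sends arbitrary joins to meets (provable directly from the adjunction, and applicable here since concept lattices are complete), and the fact that meets of concepts have intersections of extents as extents together upgrade pointwise agreement on the $\gamma g$ to $\varphi_{R_{(\varphi,\psi)}}=\varphi$, and dually for $\psi$. One small notational slip: in your final paragraph you write $\{g\}^{R}$ and $\varphi_{R}$ where $R$ must mean $R_{(\varphi,\psi)}$, not the $R$ of the first half; the intent is unambiguous, but it is worth fixing. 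Since both auxiliary facts you invoke are part of the basic theorem of FCA already cited in the paper, your argument is self-contained at the level of results the paper assumes, which is more than the paper itself provides for this theorem.
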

Let $C_{1}=\{a_{1},a_{2},\ldots,a_{m}\}$ and $C_{2}=\{b_{1},b_{2},\ldots,b_{n}\}$ be sets, and 
consider the corresponding chains $\cc_{1}=(C_{1},\leq_{1})$ and $\cc_{2}=(C_{2},\leq_{2})$, where 
the order relations are given by the indices of the corresponding sets. We can easily deduce from the 
reasoning in \prettyref{sec:intents_extents} that a relation $R\subseteq C_{1}\times C_{2}$ is a dual 
bond from $\KK(\cc_{1})$ to $\KK(\cc_{2})$ if and only if it satisfies
\begin{align*}
	\{a\}^{R} & =\{b_{1},b_{2},\ldots,b_{i}\},\quad\text{and}\\
	\{b\}^{R} & =\{a_{1},a_{2},\ldots,a_{j}\},
\end{align*}
for every $a\in C_{1}, b\in C_{2}$, and some $i\in\{0,1,\ldots,n\}$, and some $j\in\{0,1,\ldots,m\}$. 
(Again, the cases $i=0$ and $j=0$ are to be interpreted as the empty set.) 

We also noticed in \prettyref{sec:intents_extents} that an $n$-chain $\cc$ is isomorphic to the
concept lattice of the formal context $\CC(\cc')$, for some $(n-1)$-chain $\cc'$. Hence, if 
$\cc_{1}$ and $\cc_{2}$ are $m$- respectively $n$-chains, and $\cc'_{1}$ and $\cc'_{2}$ are $(m-1)$- 
respectively $(n-1)$-chains, we can interpret each dual bond from $\KK(\cc_{1})$ to $\KK(\cc_{2})$ as 
a dual bond from $\CC(\cc'_{1})$ to $\CC(\cc'_{2})$. This observation is crucial for the proof of the 
following proposition.

\begin{proposition}
  \label{prop:galois_connections_chains}
	Let $m,n\in\mathbb{N}$. The number of Galois connections between an $m$-chain and an 
	$n$-chain is $\tbinom{m+n-2}{m-1}$.
\end{proposition}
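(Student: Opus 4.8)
The plan is to reduce the count of Galois connections to a count of dual bonds between two contraordinal scales, and then to enumerate these bonds combinatorially. First I would invoke the isomorphism recorded just before the statement: an $m$-chain is isomorphic to $\CL\bigl(\CC(\cc'_1)\bigr)$ for an $(m-1)$-chain $\cc'_1$, and an $n$-chain is isomorphic to $\CL\bigl(\CC(\cc'_2)\bigr)$ for an $(n-1)$-chain $\cc'_2$. By \prettyref{thm:galois_connections} the Galois connections between these two concept lattices correspond bijectively to the dual bonds $R\subseteq C'_1\times C'_2$ from $\CC(\cc'_1)$ to $\CC(\cc'_2)$. Since such a bond is just a relation between two sets of sizes $m-1$ and $n-1$, it suffices to count the admissible $(m-1)\times(n-1)$ cross-tables.

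Next I would turn the bond conditions into a shape constraint. By definition of a dual bond, each row $\{a\}^{R}$ must be an extent of $\CC(\cc'_2)$ and each column $\{b\}^{R}$ must be an extent of $\CC(\cc'_1)$; and as recalled in \prettyref{sec:intents_extents}, the extents of the contraordinal scale of a chain are exactly the initial segments $\{c_1,c_2,\ldots,c_i\}$, with $i=0$ allowed. Reading $R$ as a $0/1$-matrix, this says precisely that the crosses in every row form an initial segment of the columns and the crosses in every column form an initial segment of the rows. I would encode such a matrix by the sequence $(r_1,r_2,\ldots,r_{m-1})$, where $r_i\in\{0,1,\ldots,n-1\}$ is the length of the initial cross-segment in row $i$; the row condition says $R$ is recovered from this data, and I claim the column condition holds if and only if $r_1\geq r_2\geq\cdots\geq r_{m-1}$. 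Indeed, if $r_{i+1}>r_i$ for some $i$, then column $r_{i+1}$ carries a cross in row $i+1$ but not in row $i<i+1$, so its cross-set is not an initial segment; conversely, if the sequence is weakly decreasing then each column cross-set $\{i:r_i\geq k\}$ is automatically an initial segment of rows.

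Finally I would enumerate the weakly decreasing sequences $r_1\geq r_2\geq\cdots\geq r_{m-1}$ with entries in the $n$-element set $\{0,1,\ldots,n-1\}$. These are in bijection with the multisets of size $m-1$ drawn from $n$ values, so their number is $\binom{(m-1)+(n-1)}{m-1}=\binom{m+n-2}{m-1}$, which is the claimed count. I expect no serious difficulty here: the argument is essentially a translation through \prettyref{thm:galois_connections} followed by a standard lattice-path/multiset count. The one point that needs care is the equivalence between the two-sided initial-segment condition and the monotonicity of $(r_i)$, where both implications must be verified, together with the degenerate cases $m=1$ or $n=1$, in which one of $\cc'_1,\cc'_2$ is empty and $R$ is forced to be the empty relation, consistent with $\binom{m+n-2}{m-1}=1$.
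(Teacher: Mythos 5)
Your proof is correct, and after the first reduction it follows a genuinely different route from the paper's. Both arguments begin the same way: invoke \prettyref{thm:galois_connections} to identify Galois connections between the two chains with dual bonds between the contraordinal scales $\CC(\cc'_{1})$ and $\CC(\cc'_{2})$ of an $(m-1)$- and an $(n-1)$-chain. From there the paper stays inside its merging machinery: using self-duality of chains it trades dual bonds for bonds, identifies bonds with proper mergings of $\cc'_{1}$ and $\cc'_{2}$ of the form $(\emptyset,\cdot)$, pushes these through the bijection of \prettyref{def:bijection_chains} to plane partitions with $m-1$ rows, $n-1$ columns and largest part at most $1$ (with an explicit reflection $S_{\pi}\mapsto T_{\pi}$ to recover a dual bond), and finally invokes MacMahon's formula (\prettyref{thm:macmahon}) with $l=1$. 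You instead count the dual bonds bare-handed: the extent conditions force the cross-table to be a staircase (every row and every column an initial segment), which you encode as a weakly decreasing sequence $(r_{1},\ldots,r_{m-1})$ with entries in $\{0,1,\ldots,n-1\}$, i.e.\ a multiset of size $m-1$ from $n$ values, giving $\tbinom{m+n-2}{m-1}$ directly. The underlying objects are of course the same --- a plane partition with parts at most $1$ is exactly such a staircase --- but your route avoids the detour through bonds, proper mergings and MacMahon, so it is shorter and self-contained; what the paper's longer route buys is precisely what that section advertises, namely exhibiting the Galois-connection count as a corollary of the merging/plane-partition bijection already constructed. Your verification of both directions of the equivalence between the column condition and monotonicity of $(r_{i})$, and your check of the degenerate cases $m=1$ and $n=1$, are sound.
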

\begin{proof}
	Let $\cc_{1}$ be an $m$-chain and let $\cc_{2}$ be an $n$-chain. Let $\cc'_{1}$ be an 
	$(m-1)$-chain, and let $\cc'_{2}$ be an $(n-1)$-chain. Note that
	\begin{align*}
		\CL\bigl(\KK(\cc_{1})\bigr)\cong\CL\bigl(\CC(\cc'_{1})\bigr),\quad\text{and}\quad
		\CL\bigl(\KK(\cc_{2})\bigr)\cong\CL\bigl(\CC(\cc'_{2})\bigr).
	\end{align*}

	Since chains are self-dual, the set of dual bonds from $\KK(\cc_{1})$ to $\KK(\cc_{2})$ is 
	in bijection with the set of bonds from $\KK(\cc_{1})$ to $\KK(\cc_{2})$. Moreover, it 
	follows immediately from \prettyref{prop:classification_mergings} and the reasoning above that 
	$(\emptyset,S)$ is a proper merging of $\cc'_{1}$ and $\cc'_{2}$ if and only if $S$ is a bond 
	from $\KK(\cc_{1})$ to $\KK(\cc_{2})$. (Note that every binary relation $S$ satisfies 
	$S\circ\emptyset=\emptyset=\emptyset\circ S$. Moreover, $\emptyset$ is a 
	bond between two formal contexts $\KK_{1}$ and $\KK_{2}$ if and only if $\KK_{2}$ does not 
	contain a full row and $\KK_{1}$ does not contain a full column. Since neither 
	$\CC(\cc_{1}')$ nor $\CC(\cc_{2}')$ contain full rows or full columns, the conditions in 
	\prettyref{prop:classification_mergings} for $(\emptyset,S)$ to be a proper merging of 
	$\cc'_{1}$ and $\cc'_{2}$ reduce to $S$ being a bond from $\CC(\cc'_{1})$ to $\CC(\cc'_{2})$. 
	The latter is equivalent to $S$ being a bond from $\KK(\cc_{1})$ to $\KK(\cc_{2})$, when 
	identifying $S$ with the corresponding relation derived from the isomorphisms between 
	$\CC(\cc'_{1})$ and $\KK(\cc_{1})$, respectively $\CC(\cc'_{2})$ and $\KK(\cc_{2})$.) Thus, 
	every Galois connection between $\cc_{1}$ and $\cc_{2}$ corresponds by 
	\prettyref{thm:galois_connections} and the previous reasoning to a proper merging of 
	$\cc'_{1}$ and $\cc'_{2}$, which is of the form $(\emptyset,\cdot)$.
	
	Let us make this correspondence more explicit. By the bijection given in 
	\prettyref{def:bijection_chains}, it is clear that a proper merging of $\cc'_{1}$ and 
	$\cc'_{2}$, which is of the form $(\emptyset,\cdot)$, corresponds to a plane partition with 
	$m-1$ rows, $n-1$ columns and largest part at most $1$. Let $\pi$ be such a plane partition, 
	and let $C'_{1}=\{a_{1},a_{2},\ldots,a_{m-1}\}$ be the ground set of $\cc'_{1}$ and let 
	$C'_{2}=\{b_{1},b_{2},\ldots,b_{n-1}\}$ be the ground set of $\cc'_{2}$. Let 
	$S_{\pi}\subseteq C'_{2}\times C'_{1}$ be the relation given in 
	\prettyref{def:bijection_chains}. Define a relation 
	$T_{\pi}\subseteq C'_{2}\times C'_{1}$ as
	\begin{align}
		b_{j}\;T_{\pi}\; a_{i}\quad\text{if and only if}\quad b_{j}\;S_{\pi}\;a_{n-i+1},
	\end{align}
	for $1\leq i\leq m-1$, and $1\leq j\leq n-1$. Thus, $T_{\pi}$ (as a cross-table) corresponds
	to a horizontal reflection of $S_{\pi}$ (as a cross-table). It is now immediate from the
	construction that the rows of $T_{\pi}$ are of the form $\{a_{1},a_{2},\ldots,a_{j}\}$ for some
	$j\in\{0,1,\ldots,m-1\}$, and the columns of $T_{\pi}$ are of the form 
	$\{b_{1},b_{2},\ldots,b_{i}\}$ for some $i\in\{0,1,\ldots,n-1\}$. Since $S_{\pi}$ is a bond 
	between $\CC(\cc'_{2})$ and $\CC(\cc'_{1})$, we can conclude that $T_{\pi}$ is a dual bond 
	between $\CC(\cc'_{2})$ and $\CC(\cc'_{1})$. By symmetry, $T_{\pi}$ induces a Galois connection 
	between $\cc_{1}$ and $\cc_{2}$. 

	The number of plane partitions with $m-1$ rows, $n-1$ columns and largest part at most $1$ 
	can be computed from \prettyref{thm:macmahon}, and it turns out to be $\tbinom{m+n-2}{m-1}$.
\end{proof}

\prettyref{fig:example_galois_connection} shows an example of a Galois connection between a $5$-chain 
and a $7$-chain arising from a plane partion with $6$ rows and $4$ columns and largest part $1$. An 
extensive illustration of the bijection described in the proof of 
\prettyref{prop:galois_connections_chains} can be found in \prettyref{app:bijection_galois_bonds}.

\begin{figure}
	\centering
	\begin{tikzpicture}
		\draw(0,6.9) node(v1){
		  \subfigure[A plane partition.]{
		    \exampleThreePartition
		  }};
		\draw(6,6) node(v2){
		  \subfigure[The corresponding relations $R$ and $S$.]{
		    \begin{tikzpicture}
			    \draw(0,0) node{\exampleThreeSTable};
			    \draw(0,3) node{\exampleThreeRTable};
		    \end{tikzpicture}
		  }};
		\draw(.25,.4) node(v3){
		  \subfigure[The corresponding proper merging.]{
		    \exampleThreePoset
		  }};
		\draw(6,-1) node(v4){
		  \subfigure[The corresponding relation $T$.]{
		    \begin{tikzpicture}
			    \draw(0,0) node{\exampleThreeTTable};
		    \end{tikzpicture}
		  }};
		\draw(3.5,-7) node(v5){
		  \subfigure[The corresponding Galois connection.]{
		    \exampleThreeGalois
		  }};
	\end{tikzpicture}
	\caption{A plane partition, the induced proper merging of a $6$-chain and a $4$-chain, the 
	  corresponding dual bond, and the induced Galois connection between a $5$-chain and a
	  $7$-chain.}
	\label{fig:example_galois_connection}
\end{figure}
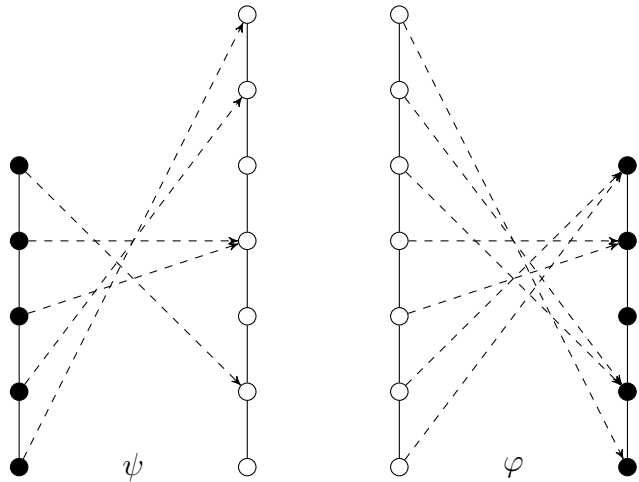

\section{Proper Mergings of two Antichains}
  \label{sec:antichains}
In this section, we investigate the number  of the proper mergings of two antichains. In particular, 
we prove the following theorem.

\begin{theorem}
  \label{thm:enumeration_antichains}
	Let $\mathfrak{A}_{m,n}^{\bullet}$ denote the set of proper mergings of an $m$-antichain 
	and an $n$-antichain. Then,
	\begin{align*}
		\left\lvert\mathfrak{A}_{m,n}^{\bullet}\right\rvert = 
		  \sum_{n_{1}+m_{1}+k_{1}=m}{\binom{m}{n_{1},m_{1},k_{1}}(-1)^{k_{1}}
		  \Bigl(2^{n_{1}}+2^{m_{1}}-1\Bigr)^{n}}.
	\end{align*}
\end{theorem}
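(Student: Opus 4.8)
The plan is to invoke the classification of proper mergings in \prettyref{prop:classification_mergings} to strip away conditions (1) and (2) entirely, reducing the problem to counting pairs of relations subject only to the "composition" and "properness" conditions, and then to recast that count as a combinatorial coloring problem. First I would note that when $P$ and $Q$ are antichains the quasi-orders $\parr$ and $\qarr$ are just the identity (equality) relations, so $\not\rightarrow_{P}$ and $\not\rightarrow_{Q}$ are the inequality relations. The contexts $(P,P,\not\rightarrow_{P})$ and $(Q,Q,\not\rightarrow_{Q})$ are therefore contranominal scales, whose concept lattices are Boolean; in particular \emph{every} subset of $P$ (resp.\ $Q$) is simultaneously an extent and an intent. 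Consequently conditions (1) and (2) of \prettyref{prop:classification_mergings} hold automatically for every pair $R\subseteq P\times Q$, $S\subseteq Q\times P$, and a pair $(R,S)$ is a proper merging precisely when
\[
R\circ S\subseteq\mathrm{id}_{P},\qquad S\circ R\subseteq\mathrm{id}_{Q},\qquad R\cap S^{-1}=\emptyset.
\]

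Next I would reformulate these conditions geometrically. Setting $T:=S^{-1}\subseteq P\times Q$, I regard $R$ and $T$ as two $0/1$-fillings of an $m\times n$ grid; equivalently I color the cell $(p,q)$ with $R$ if $p\,R\,q$, with $T$ if $q\,S\,p$, and with $E$ otherwise. Properness $R\cap S^{-1}=\emptyset$ says exactly that no cell is colored both $R$ and $T$, so the coloring $\chi\colon(p,q)\mapsto\{R,T,E\}$ is well defined. Unwinding $R\circ S\subseteq\mathrm{id}_{P}$ for a fixed $q$ shows that the column sets $\{p:p\,R\,q\}$ and $\{p:q\,S\,p\}$ cannot both be nonempty unless they coincide in a single common element, and that remaining possibility forces a cell colored both $R$ and $T$, which properness forbids; hence no \emph{column} contains both an $R$-cell and a $T$-cell. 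Reading $S\circ R\subseteq\mathrm{id}_{Q}$ symmetrically for a fixed $p$ forces each \emph{row} to avoid containing both an $R$-cell and a $T$-cell. This establishes a bijection between $\mathfrak{A}_{m,n}^{\bullet}$ and the colorings of the $m\times n$ grid by $\{R,T,E\}$ in which no row and no column contains both an $R$-cell and a $T$-cell.

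Finally I would count these colorings by inclusion--exclusion on the rows. Fix an ordered partition of the $m$ rows into blocks of sizes $n_{1},m_{1},k_{1}$ — the rows allowed to carry $R$, the rows allowed to carry $T$, and the rows forced to be all-$E$ — contributing the multinomial $\binom{m}{n_{1},m_{1},k_{1}}$. For such a labeling each column fills independently: a column is valid iff it has no $R$-cell or no $T$-cell, giving $2^{m_{1}}$ fillings in the first case and $2^{n_{1}}$ in the second, with the all-$E$ column counted twice, hence $2^{n_{1}}+2^{m_{1}}-1$ choices per column and $(2^{n_{1}}+2^{m_{1}}-1)^{n}$ over all $n$ columns. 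Attaching the sign $(-1)^{k_{1}}$ and summing yields exactly the stated formula, and the crux — the step I expect to demand the most care — is verifying that this signed sum counts each coloring once. Here one checks that an $R$-active (resp.\ $T$-active) row is forced into the first (resp.\ second) block, whereas an all-$E$ row may sit in any of the three blocks; summing $(-1)^{k_{1}}$ over the compatible labelings therefore contributes a factor $(+1)+(+1)+(-1)=1$ for each all-$E$ row and $1$ for each active row, so the signed sum collapses to $\sum_{\chi}1=\lvert\mathfrak{A}_{m,n}^{\bullet}\rvert$. The degenerate case $m=0$, whose single summand evaluates to $1$, matches the unique merging of the empty antichain with $Q$ and fixes the normalization.
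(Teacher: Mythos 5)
Your argument is correct, and it takes a genuinely different route from the paper's. You use \prettyref{prop:classification_mergings} together with the observation that the contranominal scales $(P,P,\neq)$ and $(Q,Q,\neq)$ have Boolean concept lattices (so the bond conditions (1) and (2) are vacuous) to reduce the problem to counting $\{R,T,E\}$-colorings of an $m\times n$ grid in which no row and no column contains both an $R$-cell and a $T$-cell; you then evaluate this count by a signed sum over ordered partitions of the rows into ``$R$-allowed'', ``$T$-allowed'' and ``forced empty'' blocks, where the key collapse $(+1)+(+1)+(-1)=1$ per all-$E$ row shows that each valid coloring is counted exactly once. The paper instead works with the Hasse diagram: it observes that a proper merging of two antichains is precisely a bipartite graph on the union of the two ground sets together with a choice of one of two orientations (``flips'') for every connected component having more than one vertex, and encodes this via bivariate exponential generating functions as
\begin{align*}
	G(x,y)=\exp\bigl(2B_{c}(x,y)-x-y\bigr)=B(x,y)^{2}e^{-x}e^{-y},
\end{align*}
after which the formula follows by coefficient extraction and the multinomial theorem. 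The two proofs are closely related under the hood---the paper's factors $e^{-x}e^{-y}$ perform algebraically the same inclusion--exclusion that your signs perform combinatorially, except that the paper applies it to rows and columns simultaneously and then collapses the column sum, whereas you count the columns directly---but yours is elementary and self-contained, avoiding the exponential formula and the component-flipping analysis, while the paper's approach yields the full generating function and a structural description of proper mergings of antichains, which carries more information than the single coefficient identity being proved.
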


Let $\ac_{1}$ and $\ac_{2}$ be antichains. It is obvious that the Hasse diagram of a proper merging of 
$\ac_{1}$ and $\ac_{2}$ can be regarded as a (not necessarily connected) bipartite 
graph. \prettyref{fig:mergings_two_antichains} shows the lattice of proper mergings of two 
$2$-antichains, where the nodes are labeled by the corresponding proper mergings. 
\begin{figure}
	\centering
	\include{mergings_two_antichains}
	\caption{The lattice of proper mergings of two $2$-antichains.}
	\label{fig:mergings_two_antichains}
\end{figure}
In order to prove \prettyref{thm:enumeration_antichains}, we construct the generating function
of proper mergings of two antichains. 

Let $B(x,y)$ denote the bivariate exponential generating function of bipartite graphs. The vertex set
of a bipartite graph can be partitioned into two sets $V_{1}$ and $V_{2}$. Say that the variable $x$ 
counts the cardinality of $V_{1}$ and the variable $y$ counts the cardinality of $V_{2}$. Let $b(m,n)$
denote the number of bipartite graphs with vertex set $V=V_{1}\cup V_{2}$, and 
$\lvert V_{1}\rvert=m, \lvert V_{2}\rvert=n$. Clearly, then $b(m,n)=2^{mn}$, and we find 
\begin{align}
	\label{eq:bipartite}
	  B(x,y) & =\sum_{n\geq 0}{\sum_{m\geq 0}{b(m,n)\frac{x^{n}}{n!}\cdot\frac{y^{m}}{m!}}}\\
	  \nonumber & =\sum_{n\geq 0}{\sum_{m\geq 0}{2^{mn}\frac{x^{n}}{n!}\cdot\frac{y^{m}}{m!}}}.
\end{align}
Let $B_{c}(x,y)$ denote the bivariate exponential generating function for \emph{connected} bipartite 
graphs. Since every bipartite graph can be seen as a collection of connected bipartite graphs, 
we obtain
\begin{align}
	\label{eq:connected_bipartite}
	  B(x,y)=\exp\bigl(B_{c}(x,y)\bigr).
\end{align}
See for instance \cite{wilf06generating}*{Chapter~3} for an explanation of this equality. In 
particular, this correspondence is a bivariate exponential generating function version of
\cite{wilf06generating}*{Theorem~3.4.1}. Now we are able to prove 
\prettyref{thm:enumeration_antichains}.

\begin{proof}[Proof of \prettyref{thm:enumeration_antichains}]
	Let $(R,S)$ be a proper merging of an $m$-antichain $\ac_{1}$ and an $n$-antichain 
	$\ac_{2}$. Denote by $\{\beta_{1},\beta_{2},\ldots,\beta_{k}\}$ the set of connected 
	components of the Hasse diagram of $(R,S)$ (considered as a graph). Clearly, each 
	$\beta_{i}$ is a connected bipartite graph. Without loss of generality, we can assume
	that the vertices of $\beta_{i}$ which belong to $\ac_{1}$ are below the vertices of
	$\beta_{i}$ which belong to $\ac_{2}$. Then, we can flip the graph in such a way that 
	the vertices of $\beta_{i}$ which belong to $\ac_{1}$ are above the vertices of $\beta_{i}$
	which belong to $\ac_{2}$, and edges are preserved. This procedure yields another 
	connected bipartite graph, say $\beta_{i}^{d}$. For every $i\in\{1,2,\ldots,k\}$, it is 
	clear that the set 
	$\{\beta_{1},\beta_{2},\ldots,\beta_{i-1},\beta_{i}^{d},\beta_{i+1},\ldots,\beta_{k}\}$ is 
	the set of connected components of the Hasse diagram of another proper merging, say $(R,S)^{(i)}$, 
	of $\ac_{1}$ and $\ac_{2}$. It is immediate that $(R,S)$ and $(R,S)^{(i)}$ are different 
	proper mergings of $\ac_{1}$ and $\ac_{2}$ if and only if $\beta_{i}$ has more than one 
	vertex. (See \prettyref{fig:flips} for an illustration.)

	\begin{figure}[h]
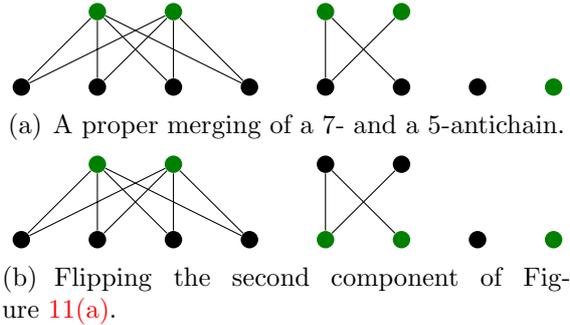

		\centering
		\subfigure[A proper merging of a $7$- and a $5$-antichain.]{\label{fig:flips_a}
			\connectedComponentsNormal
		}
		\subfigure[Flipping the second component of \prettyref{fig:flips_a}.]{\label{fig:flips_b}
			\connectedComponentsFlipped
		}
		\caption{Flipping a connected component of a proper merging of a $7$-antichain and a 
		  $5$-antichain.}
		\label{fig:flips}
	\end{figure}

	Let $G(x,y)$ denote the bivariate exponential generating function of proper mergings 
	of two antichains. The previous reasoning implies that every proper merging of two antichains 
	can be regarded as a collection of connected bipartite graphs 
	$\beta_{1},\beta_{2},\ldots,\beta_{k}$. Moreover, each connected component $\beta_{i}$ can 
	appear in two positions, namely $\beta_{i}$ and $\beta_{i}^{d}$, unless it has only one 
	vertex. Again, in the spirit of \cite{wilf06generating}*{Theorem~3.4.1}, we can write this 
	down as 
	\begin{align}
		\label{eq:correspondence}
		  G(x,y) = \exp\bigl(2\cdot B_{c}(x,y)-x-y)\bigr).
	\end{align}

	Putting \eqref{eq:bipartite}, \eqref{eq:connected_bipartite} and 
	\eqref{eq:correspondence} together, we obtain
	{\allowdisplaybreaks
	\begin{align*}
	      G(x,y) & = \exp\bigl(2\cdot\log B(x,y)-x-y\bigr)\\
	      & = B(x,y)^{2}\cdot\sum_{k_{1}\geq 0}{\frac{(-x)^{k_{1}}}{k_{1}!}}\cdot
		\sum_{k_{2}\geq 0}{\frac{(-y)^{k_{2}}}{k_{2}!}}\\
	      & = \left(\sum_{n\geq 0}{\sum_{m\geq 0}{2^{mn}\frac{x^{n}}{n!}\cdot
		\frac{y^{m}}{m!}}}\right)^{2}\cdot\sum_{k_{1}\geq 0}{\frac{(-x)^{k_{1}}}{k_{1}!}}
		\cdot\sum_{k_{2}\geq 0}{\frac{(-y)^{k_{2}}}{k_{2}!}}\\
	      & = \sum_{n_{1}\geq 0}{\sum_{n_{2}\geq 0}{2^{n_{1}n_{2}}\frac{x^{n_{1}}}{n_{1}!}
		\cdot\frac{y^{n_{2}}}{n_{2}!}}}
		\cdot\sum_{m_{1}\geq 0}{\sum_{m_{2}\geq 0}{2^{m_{1}m_{2}}
		\frac{x^{m_{1}}}{m_{1}!}\cdot\frac{y^{m_{2}}}{m_{2}!}}}\\
	      & \kern1cm\times\sum_{k_{1}\geq 0}{\frac{(-x)^{k_{1}}}{k_{1}!}}
		\cdot\sum_{k_{2}\geq 0}{\frac{(-y)^{k_{2}}}{k_{2}!}}\\
	      & = \sum_{\begin{subarray}{l}n_{1}\geq 0,\;n_{2}\geq 0,\\m_{1}\geq 0,\;m_{2}\geq 0,\\
		k_{1}\geq 0,\;k_{2}\geq 0\end{subarray}}
		{2^{n_{1}n_{2}+m_{1}m_{2}}(-1)^{k_{1}}(-1)^{k_{2}}}
		\cdot{\frac{x^{n_{1}+m_{1}+k_{1}}}{n_{1}!\,m_{1}!\,k_{1}!}
		\cdot\frac{y^{n_{2}+m_{2}+k_{2}}}{n_{2}!\,m_{2}!\,k_{2}!}}.
	\end{align*}}%

	The number of proper mergings of an $m$- and an $n$-antichain is now given by the coefficient 
	of $\tfrac{x^{m}y^{n}}{m!\,n!}$ in $G(x,y)$. Hence,
	{\allowdisplaybreaks
	\begin{align*}
		\left\lvert\mathfrak{A}_{m,n}^{\bullet}\right\rvert & = 
		  \left\langle\tfrac{x^{m}y^{n}}{m!\,n!}\right\rangle G(x,y)\\
		& = m!\,n!\sum_{\begin{subarray}{l}n_{1}+m_{1}+k_{1}=m,\\ n_{2}+m_{2}+k_{2}=n
		  \end{subarray}}
		  {\frac{2^{n_{1}n_{2}}2^{m_{1}m_{2}}(-1)^{k_{1}}(-1)^{k_{2}}}
		  {n_{1}!\,n_{2}!\,m_{1}!\,m_{2}!\,k_{1}!\,k_{2}!}}\\
		& = \sum_{n_{1}+m_{1}+k_{1}=m}{\binom{m}{n_{1},m_{1},k_{1}}(-1)^{k_{1}}}\\
		& \kern1cm\times\sum_{n_{2}+m_{2}+k_{2}=n}{\binom{n}{n_{2},m_{2},k_{2}}
		  \bigl(2^{n_{1}}\bigr)^{n_{2}}\bigl(2^{m_{1}}\bigr)^{m_{2}}(-1)^{k_{1}}}\\
		& = \sum_{n_{1}+m_{1}+k_{1}=m}{\binom{m}{n_{1},m_{1},k_{1}}(-1)^{k_{1}}
		  \Bigl(2^{n_{1}}+2^{m_{1}}-1\Bigr)^{n}}.
	\end{align*}}%
\end{proof}

\section{Proper Mergings of an Antichain and a Chain}
  \label{sec:antichain_chain}
\begin{figure}
	\centering
	\begin{tikzpicture}
	\def\x{1.63};
	\def\y{2.2};
	\def\r{.8};
	\def\rn{.8};
	\draw(3.5*\x,1*\y) node[circle,draw,scale=\r](v1){};
	\draw(2.7*\x,2*\y) node[circle,draw,scale=\r](v2){};
	\draw(4.3*\x,2*\y) node[circle,draw,scale=\r](v3){};
	\draw(2*\x,3*\y) node[circle,draw,scale=\r](v4){};
	\draw(3.5*\x,3*\y) node[circle,draw,scale=\r](v5){};
	\draw(5*\x,3*\y) node[circle,draw,scale=\r](v6){};
	\draw(1.5*\x,4*\y) node[circle,draw,scale=\r](v7){};
	\draw(2.7*\x,4*\y) node[circle,draw,scale=\r](v8){};
	\draw(4.3*\x,4*\y) node[circle,draw,scale=\r](v9){};
	\draw(5.5*\x,4*\y) node[circle,draw,scale=\r](v10){};
	\draw(.5*\x,5.5*\y) node[circle,draw,scale=\r](v11){};
	\draw(1.5*\x,5.5*\y) node[circle,draw,scale=\r](v12){};
	\draw(2.7*\x,5.5*\y) node[circle,draw,scale=\r](v13){};
	\draw(4.3*\x,5.5*\y) node[circle,draw,scale=\r](v14){};
	\draw(5.5*\x,5.5*\y) node[circle,draw,scale=\r](v15){};
	\draw(6.5*\x,5.5*\y) node[circle,draw,scale=\r](v16){};
	\draw(1.5*\x,7*\y) node[circle,draw,scale=\r](v17){};
	\draw(2.7*\x,7*\y) node[circle,draw,scale=\r](v18){};
	\draw(4.3*\x,7*\y) node[circle,draw,scale=\r](v19){};
	\draw(5.5*\x,7*\y) node[circle,draw,scale=\r](v20){};
	\draw(2*\x,8*\y) node[circle,draw,scale=\r](v21){};
	\draw(3.5*\x,8*\y) node[circle,draw,scale=\r](v22){};
	\draw(5*\x,8*\y) node[circle,draw,scale=\r](v23){};
	\draw(2.7*\x,9*\y) node[circle,draw,scale=\r](v24){};
	\draw(4.3*\x,9*\y) node[circle,draw,scale=\r](v25){};
	\draw(3.5*\x,10*\y) node[circle,draw,scale=\r](v26){};
	\draw(v1) -- (v2);
	\draw(v1) -- (v3);
	\draw(v2) -- (v4);
	\draw(v2) -- (v5);
	\draw(v3) -- (v5);
	\draw(v3) -- (v6);
	\draw(v4) -- (v8);
	\draw(v5) -- (v7);
	\draw(v5) -- (v8);
	\draw(v5) -- (v9);
	\draw(v5) -- (v10);
	\draw(v6) -- (v9);
	\draw(v7) -- (v11);
	\draw(v7) -- (v13);
	\draw(v7) -- (v15);
	\draw(v8) -- (v11);
	\draw(v8) -- (v12);
	\draw(v8) -- (v14);
	\draw(v9) -- (v14);
	\draw(v9) -- (v15);
	\draw(v9) -- (v16);
	\draw(v10) -- (v12);
	\draw(v10) -- (v13);
	\draw(v10) -- (v16);
	\draw(v11) -- (v17);
	\draw(v11) -- (v18);
	\draw(v12) -- (v17);
	\draw(v12) -- (v19);
	\draw(v13) -- (v17);
	\draw(v13) -- (v20);
	\draw(v14) -- (v18);
	\draw(v14) -- (v19);
	\draw(v15) -- (v18);
	\draw(v15) -- (v20);
	\draw(v16) -- (v19);
	\draw(v16) -- (v20);
	\draw(v17) -- (v22);
	\draw(v18) -- (v21);
	\draw(v18) -- (v22);
	\draw(v19) -- (v22);
	\draw(v19) -- (v23);
	\draw(v20) -- (v22);
	\draw(v21) -- (v24);
	\draw(v22) -- (v24);
	\draw(v22) -- (v25);
	\draw(v23) -- (v25);
	\draw(v24) -- (v26);
	\draw(v25) -- (v26);
	\draw(4*\x,1.1*\y) node{\nOne{180}{\rn}};
	\draw(2.4*\x,2*\y) node{\nTwo{180}{\rn}};
	\draw(4.6*\x,2*\y) node{\nThree{180}{\rn}};
	\draw(1.7*\x,3*\y) node{\nFour{180}{\rn}};
	\draw(3.5*\x,2.65*\y) node{\nFive{180}{\rn}};
	\draw(5.3*\x,3*\y) node{\nSix{180}{\rn}};
	\draw(1.15*\x,4*\y) node{\nSeven{180}{\rn}};
	\draw(2.7*\x,3.65*\y) node{\nEight{180}{\rn}};
	\draw(4.3*\x,3.65*\y) node{\nNine{180}{\rn}};
	\draw(5.85*\x,4*\y) node{\nTen{180}{\rn}};
	\draw(.15*\x,5.5*\y) node{\nEleven{0}{\rn}};
	\draw(1.15*\x,5.5*\y) node{\nTwelve{180}{\rn}};
	\draw(2.35*\x,5.5*\y) node{\nThirteen{0}{\rn}};
	\draw(4.65*\x,5.5*\y) node{\nFourteen{0}{\rn}};
	\draw(5.85*\x,5.5*\y) node{\nTwelve{0}{\rn}};
	\draw(6.85*\x,5.5*\y) node{\nFifteen{0}{\rn}};
	\draw(1.15*\x,7*\y) node{\nSeven{0}{\rn}};
	\draw(2.7*\x,7.35*\y) node{\nEight{0}{\rn}};
	\draw(4.3*\x,7.35*\y) node{\nNine{0}{\rn}};
	\draw(5.85*\x,7*\y) node{\nTen{0}{\rn}};
	\draw(1.7*\x,8*\y) node{\nSix{0}{\rn}};
	\draw(3.5*\x,8.35*\y) node{\nFive{0}{\rn}};
	\draw(5.3*\x,8*\y) node{\nFour{0}{\rn}};
	\draw(4.6*\x,9*\y) node{\nTwo{0}{\rn}};
	\draw(2.4*\x,9*\y) node{\nThree{0}{\rn}};
	\draw(3*\x,9.9*\y) node{\nOne{0}{\rn}};
\end{tikzpicture}
	\caption{The lattice of proper mergings of a $2$-antichain and a $2$-chain.}
	\label{fig:mergings_antichain_chain}
\end{figure}
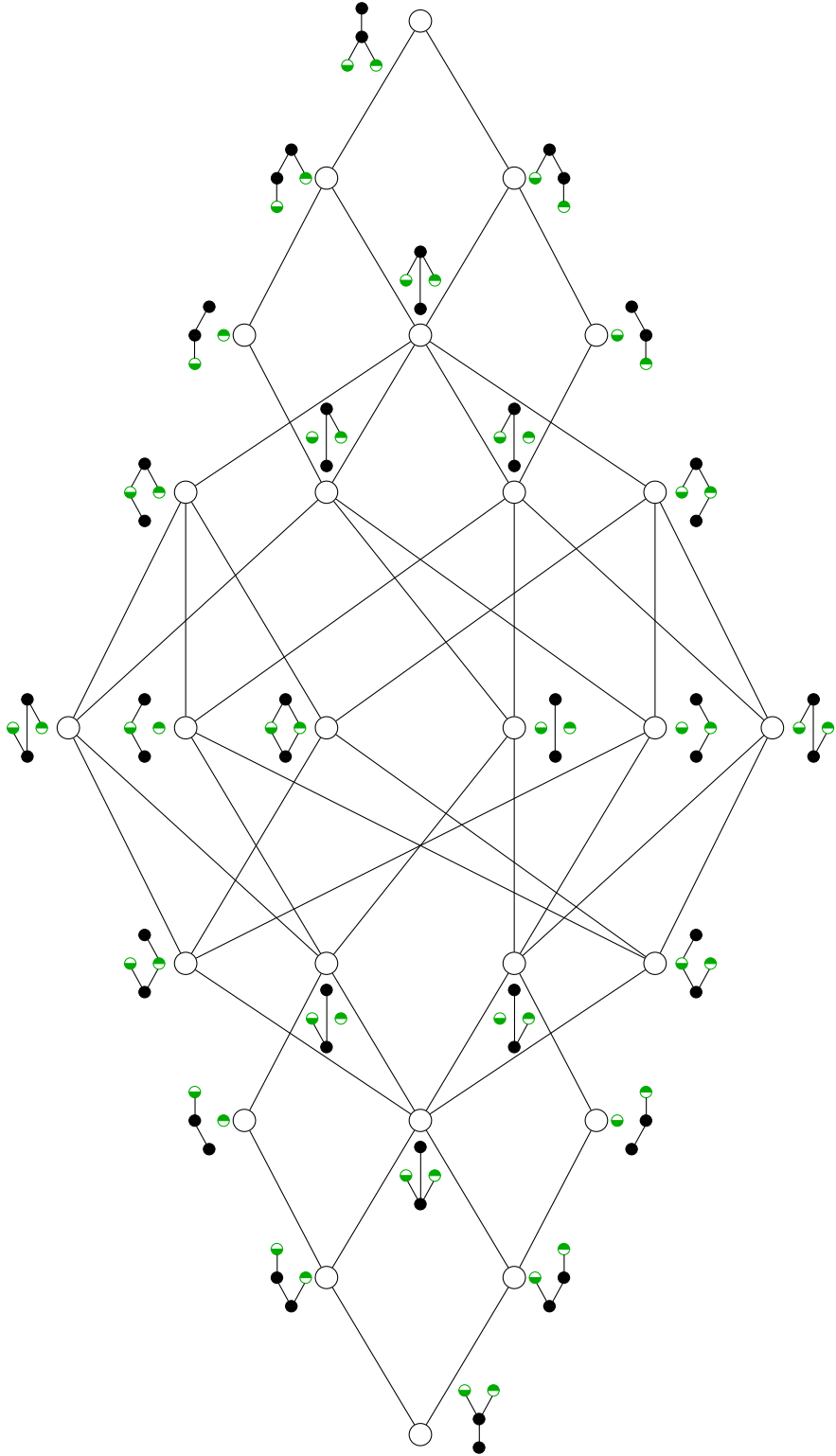

In this section, we investigate the family of proper mergings of an antichain and a chain. In
particular, we give a bijective proof of the following theorem.
\begin{theorem}
  \label{thm:enumeration_antichain_chain}
	Let $m,n\in\mathbb{N}$, and let $\mathfrak{A\!C}_{m,n}^{\bullet}$ denote the set of 
	proper mergings of an $m$-antichain and an $n$-chain. Then,
	\begin{align}
	  \label{eq:formula_antichain_chain}
		\left\lvert\mathfrak{A\!C}_{m,n}^{\bullet}\right\rvert
		  =\sum_{i=1}^{n+1}{\Bigl((n+2-i)^{m}-(n+1-i)^{m}\Bigr)i^{m}}.
	\end{align}
\end{theorem}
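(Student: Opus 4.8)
The plan is to count $\mathfrak{A\!C}_{m,n}^{\bullet}$ directly through the characterization in \prettyref{prop:classification_mergings}, reducing a proper merging to a pair of integer threshold vectors. Write $\ac=\{a_1,\dots,a_m\}$ for the antichain and $\cc=\{c_1,\dots,c_n\}$ for the chain with $c_1<\dots<c_n$, and let $(R,S)$ with $R\subseteq\ac\times\cc$, $S\subseteq\cc\times\ac$ be a proper merging. The first step is to make the two bond conditions explicit. The contraordinal scale $\CC(\ac)$ is the $\neq$-context, for which $B^{\CC(\ac)}=\ac\setminus B$ and hence every subset of $\ac$ is both an extent and an intent; by the reasoning in \prettyref{sec:intents_extents}, the extents of $\CC(\cc)$ are the down-sets $\{c_1,\dots,c_k\}$ and the intents are the up-sets $\{c_k,\dots,c_n\}$. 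Thus condition~(1) forces each row $\{a_i\}^R$ to be an up-set of $\cc$ and imposes nothing on the columns, so $R$ is encoded by a vector $(r_1,\dots,r_m)$ with $r_i\in\{1,\dots,n+1\}$ and $\{a_i\}^R=\{c_{r_i},\dots,c_n\}$; dually, condition~(2) forces each column $\{a_j\}^S$ to be a down-set, so $S$ is encoded by $(s_1,\dots,s_m)$ with $s_j\in\{0,\dots,n\}$ and $\{c\mid c\,S\,a_j\}=\{c_1,\dots,c_{s_j}\}$.

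Next I would translate the three remaining requirements into inequalities between these thresholds. Since $(a_i,c_k)\in R$ amounts to $k\ge r_i$ and $(c_k,a_j)\in S$ amounts to $k\le s_j$, properness $R\cap S^{-1}=\emptyset$ says no $k$ satisfies both $k\ge r_i$ and $k\le s_i$, i.e. $s_i<r_i$ for every $i$. Condition~(3), namely $R\circ S\subseteq\parr$ (equality on the antichain), says that for $i\ne j$ there is no $k$ with $r_i\le k\le s_j$, i.e. $s_j<r_i$ for all $i\ne j$. Combining these two gives the single clean constraint
\[
  \max_{1\le j\le m} s_j \;<\; \min_{1\le i\le m} r_i .
\]
The observation that disposes of condition~(4) is that it is now automatic: if $(c_k,c_l)\in S\circ R$ via some $a_i$, then $k\le s_i<r_i\le l$, whence $c_k\qarr c_l$, which is exactly $S\circ R\subseteq\qarr$. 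Hence proper mergings are in bijection with pairs $\bigl((r_i),(s_j)\bigr)\in\{1,\dots,n+1\}^m\times\{0,\dots,n\}^m$ obeying the displayed inequality.

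Finally I would evaluate the count by stratifying on $p=\min_i r_i\in\{1,\dots,n+1\}$. For fixed $p$, inclusion--exclusion between ``all $r_i\ge p$'' and ``all $r_i\ge p+1$'' gives $(n+2-p)^m-(n+1-p)^m$ admissible vectors $(r_i)$, while $\max_j s_j<p$ leaves each $s_j$ free in $\{0,\dots,p-1\}$, contributing $p^m$; summing the product over $p$ and renaming $p$ to $i$ reproduces \eqref{eq:formula_antichain_chain}. I expect the genuine mathematical content to sit entirely in the translation step of the previous paragraph — in particular the verification that condition~(4) follows for free from properness — while the remainder is bookkeeping. The one place demanding care is the boundary values $r_i=n+1$ and $s_j=0$ (the empty up- and down-sets) and the degenerate case $m=0$, where the term $i=n+1$ contributes $0^0$; interpreting it as $0$ collapses the sum to $1$, matching the unique empty merging, exactly as in the remark following \prettyref{thm:main_theorem}.
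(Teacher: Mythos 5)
Your proof is correct, and each step checks out against \prettyref{prop:classification_mergings}: since every subset of the antichain's ground set is both an extent and an intent of its contraordinal scale, conditions (1) and (2) reduce exactly to the rows of $R$ being up-sets and the columns of $S$ being down-sets of the chain; properness together with condition (3) is equivalent to $R\circ S=\emptyset$, i.e.\ to $\max_j s_j<\min_i r_i$; condition (4) does come for free from $s_i<r_i$; and the stratification over $p=\min_i r_i$ gives \eqref{eq:formula_antichain_chain}, with the $m=0$ and boundary cases handled as in the paper. The route differs from the paper's only in the final step, because the underlying reduction is the same: the paper packages precisely your data as a monotone $(n+1)$-coloring of $\vec{K}_{m,m}$ --- your thresholds are a re-indexing of the colors, $r_i=n+2-\gamma\bigl(v_i^{(1)}\bigr)$ and $s_j=n+1-\gamma\bigl(v_j^{(2)}\bigr)$, and your inequality $\max_j s_j<\min_i r_i$ is exactly monotonicity of $\gamma$ --- and then it concludes by citing the known enumeration of such colorings (\prettyref{prop:monotone_colorings}) rather than counting. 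What your version buys is self-containedness: the inclusion--exclusion on $\min_i r_i$ replaces the external formula, so the theorem is proved from scratch in a few lines. What the paper's version buys is the explicit bijection to a named combinatorial object, which it reuses afterwards: the lattice isomorphism with $\Gamma_{n+1}(\vec{K}_{m,m})$ noted after \prettyref{thm:bijection_antichain_chain}, and the count of Galois connections between Boolean lattices and chains in \prettyref{prop:galois_connections_boolean_chain}, both rest on the coloring picture, which is also how the sequence was recognized in the first place.
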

\begin{remark}
	We notice that in the case $m=0$, the equation \eqref{eq:formula_antichain_chain} contains
	a term of the form ``$0^{0}$'' which is per se undefined. Since there exists exactly one 
	proper merging of an empty antichain and some chain (namely the chain itself), it is reasonable
	to define the term ``$0^{0}$'' as being equal to $0$. This harmonizes well with 
	\prettyref{thm:bijection_antichain_chain} below, since there is exactly one monotone coloring
	of an empty graph.
\end{remark}

\prettyref{fig:mergings_antichain_chain} shows the lattice of proper mergings of a $2$-antichain and 
a $2$-chain, where the nodes are labeled by the corresponding proper mergings. Computer experiments 
show that the number of proper mergings of a $3$-antichain and an
$n$-chain is (up to a shift) given by \cite{sloane}*{A085465}. This sequence counts the number of 
monotone $(n+1)$-colorings of the complete bipartite digraph $\vec{K}_{3,3}$, and was first mentioned in 
\cite{jovovic04antichains}, in a more general form. But let us first recall some definitions.

A \emph{directed graph} (digraph for short) is a tuple $(V,\vec{E})$, where $V$ is a set of 
\emph{vertices}, and $\vec{E}\subseteq V\times V$ is a set of \emph{directed edges}. A directed 
edge $(v_{1},v_{2})\in\vec{E}$ is to be understood as being directed from 
$v_{1}$ to $v_{2}$. We call a digraph $(V,\vec{E})$ \emph{complete bipartite} if we can partition $V$ 
into two disjoint sets $V_{1}$ and $V_{2}$ such that 
$\vec{E}=V_{1}\times V_{2}$. In the case 
$\lvert V_{1}\rvert=m_{1}$ and $\lvert V_{2}\rvert=m_{2}$, we simply write $\vec{K}_{m_{1},m_{2}}$
instead of $(V,\vec{E})$. 

A \emph{$k$-coloring of $(V,\vec{E})$} is a map $\gamma:V\rightarrow\{1,2,\ldots,k\}$. A $k$-coloring 
$\gamma$ is called \emph{monotone} if $(v_{1},v_{2})\in\vec{E}$ implies 
$\gamma(v_{1})\leq\gamma(v_{2})$. See \prettyref{fig:complete_bipartite_4_4} for an illustration.
\begin{figure}
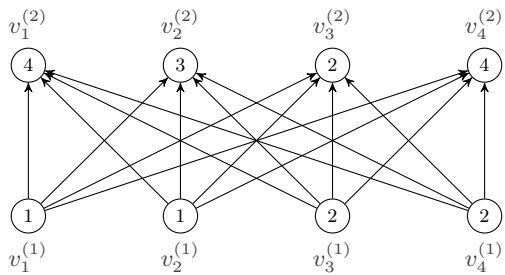

	\centering
	\exampleFourColoring
	\caption{The complete bipartite graph $\vec{K}_{4,4}$, and a monotone $4$-coloring.}
	\label{fig:complete_bipartite_4_4}
\end{figure}
As already mentioned in the beginning of this section, there exists a general formula for the number 
of monotone $k$-colorings of $\vec{K}_{m_{1},m_{2}}$.

\begin{proposition}[\cite{jovovic04antichains}*{Proposition~4.5}]
  \label{prop:monotone_colorings}
	For every $k,m_{1},m_{2}\in\mathbb{N}$, let $\eta_{k}(\vec{K}_{m_{1},m_{2}})$ denote the
	number of monotone $k$-colorings of the complete bipartite digraph $\vec{G}_{m_{1},m_{2}}$.
	Then,
	\begin{align*}
		\eta_{k}(\vec{K}_{m_{1},m_{2}}) = 
		  \sum_{i=1}^{k}{\Bigl((k+1-i)^{m_{1}}-(k-i)^{m_{1}}\Bigr)\cdot i^{m_{2}}}.
	\end{align*}
	Equivalently,
	\begin{align*}
		\eta_{k}(\vec{K}_{m_{1},m_{2}}) = 
		  \sum_{i=1}^{k}{\Bigl((k+1-i)^{m_{2}}-(k-i)^{m_{2}}\Bigr)\cdot i^{m_{1}}}.
	\end{align*}
\end{proposition}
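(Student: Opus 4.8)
The plan is to turn monotonicity into a single scalar inequality and then count by conditioning on a threshold color. Write the vertex set of $\vec{K}_{m_{1},m_{2}}$ as $V_{1}\cup V_{2}$ with $\vec{E}=V_{1}\times V_{2}$, and record a $k$-coloring $\gamma$ as a pair $(f,g)$, where $f=\gamma|_{V_{1}}$ and $g=\gamma|_{V_{2}}$. Since \emph{every} ordered pair $(v_{1},v_{2})\in V_{1}\times V_{2}$ is an edge, the requirement $\gamma(v_{1})\leq\gamma(v_{2})$ for all such pairs is equivalent to the single condition
\[
	\max_{v\in V_{1}}f(v)\;\leq\;\min_{w\in V_{2}}g(w).
\]
Thus a monotone $k$-coloring is exactly a pair of functions $f\colon V_{1}\to\{1,\ldots,k\}$ and $g\colon V_{2}\to\{1,\ldots,k\}$ whose $V_{1}$-maximum does not exceed the $V_{2}$-minimum, and I would count such pairs.

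First I would establish the second displayed formula by conditioning on the value $i=\min_{w}g(w)$. For fixed $i\in\{1,\ldots,k\}$, the colorings $g$ of $V_{2}$ with minimum exactly $i$ are those taking values in $\{i,i+1,\ldots,k\}$ but not all in $\{i+1,\ldots,k\}$; by inclusion--exclusion there are $(k+1-i)^{m_{2}}-(k-i)^{m_{2}}$ of them. Once this minimum equals $i$, the inequality forces $f$ to take values in $\{1,\ldots,i\}$ and imposes nothing further, giving $i^{m_{1}}$ independent choices. Multiplying and summing over $i$ yields
\[
	\eta_{k}(\vec{K}_{m_{1},m_{2}})=\sum_{i=1}^{k}\Bigl((k+1-i)^{m_{2}}-(k-i)^{m_{2}}\Bigr)i^{m_{1}},
\]
which is the second expression in the proposition.

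Next I would obtain the first formula. The quickest route is to condition instead on $j=\max_{v}f(v)$: there are $j^{m_{1}}-(j-1)^{m_{1}}$ colorings of $V_{1}$ with maximum exactly $j$, and then $g$ may take any value in $\{j,\ldots,k\}$, giving $(k+1-j)^{m_{2}}$ choices. Summing over $j$ and reindexing by $i=k+1-j$ turns $\sum_{j}\bigl(j^{m_{1}}-(j-1)^{m_{1}}\bigr)(k+1-j)^{m_{2}}$ into $\sum_{i}\bigl((k+1-i)^{m_{1}}-(k-i)^{m_{1}}\bigr)i^{m_{2}}$, the first expression. Alternatively, I would note that reversing all edges sends $\vec{K}_{m_{1},m_{2}}$ to $\vec{K}_{m_{2},m_{1}}$ and that $\gamma\mapsto k+1-\gamma$ carries monotone colorings of one bijectively onto those of the other, whence $\eta_{k}(\vec{K}_{m_{1},m_{2}})=\eta_{k}(\vec{K}_{m_{2},m_{1}})$; this symmetry simply interchanges $m_{1}$ and $m_{2}$ in the second formula and reproduces the first.

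The argument is elementary, so I do not anticipate a serious obstacle; the only points that require care are the inclusion--exclusion count of colorings with a prescribed extremum and the attendant boundary conventions (for instance the term $(k-i)^{m_{2}}$ at $i=k$ equals $0^{m_{2}}$, which vanishes precisely when $m_{2}>0$, and the degenerate cases $m_{1}=0$ or $m_{2}=0$, where an empty part carries a single vacuous coloring), together with the bookkeeping of the reindexing $i\mapsto k+1-i$ used to pass between the two forms.
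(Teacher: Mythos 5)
Your proof is correct, but note that the paper itself contains no proof of this statement: it is imported wholesale as \cite{jovovic04antichains}*{Proposition~4.5} and used as a black box to conclude Theorem~\ref{thm:enumeration_antichain_chain} from the bijection of Theorem~\ref{thm:bijection_antichain_chain}. So your argument is not an alternative to the paper's proof but a self-contained substitute for the citation, and it is a sound one. The key observation that completeness of the bipartite digraph collapses monotonicity to the single inequality $\max_{v\in V_{1}}f(v)\leq\min_{w\in V_{2}}g(w)$ is exactly what makes the count elementary; conditioning on $\min g$ (with the inclusion--exclusion count $(k+1-i)^{m_{2}}-(k-i)^{m_{2}}$ of colorings with prescribed minimum) gives the second displayed formula, conditioning on $\max f$ plus the reindexing $i=k+1-j$ gives the first, and both steps check out. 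Your symmetry argument is also valid: edge reversal turns $\vec{K}_{m_{1},m_{2}}$ into $\vec{K}_{m_{2},m_{1}}$, and $\gamma\mapsto k+1-\gamma$ is an involution carrying monotone colorings of one onto the other, which explains why the two formulas are equivalent rather than merely both true. What this buys, beyond independence from the reference, is transparency: the two factors in each summand acquire a concrete meaning (colorings of one side with prescribed extremum, free colorings of the other side), which is invisible when the formula is simply quoted. The one point to tighten is the degenerate case: when $m_{2}=0$ the empty side has no minimum, the conditioning argument is vacuous, and the second formula only returns the correct value $k^{m_{1}}$ under the convention $0^{0}=0$; this is precisely the convention the paper adopts in the remark following Theorem~\ref{thm:enumeration_antichain_chain}, so you should state it explicitly rather than leave it as an anticipated caveat. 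For $m_{1},m_{2}\geq 1$ your argument is complete as written.
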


In the light of this proposition, we notice immediately that \eqref{eq:formula_antichain_chain} 
corresponds to $\eta_{n+1}(\vec{K}_{m,m})$. Let $\Gamma_{n+1}(\vec{K}_{m,m})$ denote the set of 
monotone $(n+1)$-colorings of $\vec{K}_{m,m}$. 

\subsection{A Bijection between Monotone Colorings and Proper Mergings of an Antichain and a Chain}
  \label{sec:bijection_antichain_chain}
Let $\ac_{(m)}=(A,=)$, with $A=\{a_{1},a_{2},\ldots,a_{m}\}$, denote an $m$-antichain, and let 
$\cc_{(n)}=(C,\leq)$, with $C=\{c_{1},c_{2},\ldots,c_{n}\}$, denote an $n$-chain, where the order is
indicated by the indices. Since $\vec{K}_{m,m}$ consists of two independent sets of size $m$, it is 
obvious to relate these independent sets to the antichain $\ac_{(m)}$. We recall from 
\prettyref{sec:intents_extents} that the contraordinal scale of $\cc_{(n)}$ has precisely $(n+1)$ 
extents. Since we consider monotone $(n+1)$-colorings of $\vec{K}_{m,m}$, it is quite evident to 
relate the color of a vertex in $\vec{K}_{m,m}$ to an extent of $\CC(\cc_{(n)})$. 

\begin{definition}
  \label{def:bijection_antichain_chain}
	Let $\gamma\in\Gamma_{n+1}(\vec{K}_{m,m})$ be a monotone $(n+1)$-coloring of $\vec{K}_{m,m}$.
	Let the vertex set $V$ of $\vec{K}_{m,m}$ be partitioned into sets 
	$V_{1}=\{v_{1}^{(1)},v_{2}^{(1)},\ldots,v_{m}^{(1)}\}$ and 
	$V_{2}=\{v_{1}^{(2)},v_{2}^{(2)},\ldots,v_{m}^{(2)}\}$. Let $\ac_{(m)}=(A,=)$ denote an 
	$m$-antichain with ground set $A=\{a_{1},a_{2},\ldots,a_{m}\}$, and let
	$\cc_{(n)}=(C,\leq)$ denote an $n$-chain with ground set $C=\{c_{1},c_{2},\ldots,c_{n}\}$, 
	where the order is indicated by the indices. Define relations $R_{\gamma}\subseteq A\times C$,
	and $S_{\gamma}\subseteq C\times A$ as 
	\begin{align}
		a_{i}\;R_{\gamma}\;c_{j} & \quad\text{if and only if}\quad 
		  \gamma\bigl(v_{i}^{(1)}\bigr)=k\;\text{and}\;n+2-k\leq j\leq n,\\
		c_{j}\;S_{\gamma}\;a_{i} & \quad\text{if and only if}\quad 
		  \gamma\bigl(v_{i}^{(2)}\bigr)=k\;\text{and}\;1\leq j\leq n+1-k,
	\end{align}
	for all $1\leq i\leq m$, and $1\leq j\leq n$.
\end{definition}
This means that the row $\{a_{i}\}^{R}$ corresponds to the $(n+2-k)$-th intent of 
$\CL\bigl(\CC(\cc_{(n)})\bigr)$ read from bottom to top if and only if the vertex $v_{i}^{(1)}$ has 
color $k$. Similarly, the column $\{a_{i}\}^{S}$ corresponds to the $(n+2-k)$-th extent of 
$\CL\bigl(\CC(\cc_{(n)})\bigr)$ read from bottom to top if and only if the vertex $v_{i}^{(2)}$ has
color $k$. See \prettyref{fig:merging_from_digraph_cxt} for an illustration. 

\begin{figure}
	\centering
	\begin{tikzpicture}
		\draw(0,0) node{\exampleFourRTable};
		\draw(5,0) node{\exampleFourSTable};
	\end{tikzpicture}
	\caption{The relations $R$ and $S$ induced by the monotone coloring of $\vec{K}_{4,4}$ 
	  depicted in \prettyref{fig:complete_bipartite_4_4}.}
	\label{fig:merging_from_digraph_cxt}
\end{figure}

\begin{lemma}
  \label{lem:create_merging_antichain_chain}
	The relations $R_{\gamma}$ and $S_{\gamma}$ from \prettyref{def:bijection_antichain_chain}
	form a proper merging of $\ac_{(m)}$ and $\cc_{(n)}$.
\end{lemma}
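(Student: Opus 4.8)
The plan is to check the four conditions of \prettyref{prop:classification_mergings} together with the properness condition $R_{\gamma}\cap S_{\gamma}^{-1}=\emptyset$. To keep the index arithmetic readable I would abbreviate $k_{i}=\gamma\bigl(v_{i}^{(1)}\bigr)$ and $\ell_{i}=\gamma\bigl(v_{i}^{(2)}\bigr)$, so that the defining conditions become $a_{i}\,R_{\gamma}\,c_{j}\iff n+2-k_{i}\leq j\leq n$ and $c_{j}\,S_{\gamma}\,a_{i}\iff 1\leq j\leq n+1-\ell_{i}$. The one structural fact I will use again and again is the monotonicity of $\gamma$ on $\vec{K}_{m,m}$: since every directed edge runs from $V_{1}$ to $V_{2}$, we have $k_{i}\leq\ell_{i'}$ for \emph{all} pairs $i,i'$, and in particular $k_{i}\leq\ell_{i}$.

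First I would dispose of the two bond conditions (1) and (2). The contraordinal scale of the $m$-antichain is the context $\CC(\ac_{(m)})=(A,A,\neq)$, and a short computation with the derivation operators of \eqref{eq:int}--\eqref{eq:ext} gives $B^{\neq}=A\setminus B$, hence $B^{\neq\neq}=B$ for every $B\subseteq A$; thus \emph{every} subset of $A$ is at once an extent and an intent of $\CC(\ac_{(m)})$, so all requirements on the $A$-side of the two bonds hold automatically. On the $C$-side I would invoke the description of intents and extents of $\CC(\cc_{(n)})$ from \prettyref{sec:intents_extents}: the row $\{a_{i}\}^{R_{\gamma}}=\{c_{n+2-k_{i}},\ldots,c_{n}\}$ has the form $\{c_{\ell},\ldots,c_{n}\}$ and is therefore an intent, while the column $\{a_{i}\}^{S_{\gamma}}=\{c_{1},\ldots,c_{n+1-\ell_{i}}\}$ has the form $\{c_{1},\ldots,c_{\ell}\}$ and is therefore an extent. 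This yields conditions (1) and (2).

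Next I turn to conditions (3) and (4), where monotonicity does the real work. For (3), if $(a_{i},a_{i'})\in R_{\gamma}\circ S_{\gamma}$ then some $c_{j}$ satisfies $n+2-k_{i}\leq j\leq n+1-\ell_{i'}$, forcing $\ell_{i'}<k_{i}$ and contradicting $k_{i}\leq\ell_{i'}$; hence $R_{\gamma}\circ S_{\gamma}=\emptyset$, which is trivially contained in the equality order of the antichain. For (4), if $(c_{j},c_{j'})\in S_{\gamma}\circ R_{\gamma}$ through some $a_{i}$, then $j\leq n+1-\ell_{i}$ and $j'\geq n+2-k_{i}$, and combining these with $k_{i}\leq\ell_{i}$ gives $j\leq n+1-\ell_{i}\leq n+1-k_{i}<n+2-k_{i}\leq j'$, so $c_{j}\leq c_{j'}$, as required. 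The same comparison settles properness: if $(a_{i},c_{j})\in R_{\gamma}\cap S_{\gamma}^{-1}$ then $n+2-k_{i}\leq j\leq n+1-\ell_{i}$, again contradicting $k_{i}\leq\ell_{i}$, so $R_{\gamma}\cap S_{\gamma}^{-1}=\emptyset$.

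The conceptual content is slight; the step most likely to trip me up is the bookkeeping with the two distinct index shifts $n+2-k$ and $n+1-k$, together with getting the direction of the monotonicity inequality right (which rests on the convention $\vec{E}=V_{1}\times V_{2}$ that all edges of $\vec{K}_{m,m}$ run from $V_{1}$ to $V_{2}$). I would therefore match each color $k$ to the correct extent or intent of $\CL\bigl(\CC(\cc_{(n)})\bigr)$ counted from the bottom, exactly as in the sentence following \prettyref{def:bijection_antichain_chain}, so that the interval endpoints line up precisely with the comparabilities asserted above.
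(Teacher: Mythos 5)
Your proof is correct and takes essentially the same route as the paper's: you verify conditions (1)--(4) of \prettyref{prop:classification_mergings} using that every subset of $A$ is both an intent and an extent of $\CC(\ac_{(m)})$ while rows of $R_{\gamma}$ and columns of $S_{\gamma}$ are intents respectively extents of $\CC(\cc_{(n)})$, and then settle (3), (4) and properness by the same index comparisons against the monotonicity inequality $k_{i}\leq\ell_{i'}$. The only cosmetic difference is that you justify the Boolean-lattice fact by the direct computation $B^{\neq}=A\setminus B$ where the paper simply cites it as well known.
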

\begin{proof}
	Let $\ac_{(m)}=(A,=)$ be an antichain, and denote by $\KK(\ac_{(m)})$ the corresponding formal
	context $(A,A,=)$. We initiate the proof with the investigation of the intents and extents 
	of the contraordinal scale $\CC(\ac_{(m)})=(A,A,\neq)$. Since $\ac_{(m)}$ is an antichain, we 
	can write the cross-table of $\KK(\ac_{(m)})$ in such a way that there are only crosses on
	the main diagonal. It is immediate that we can write the cross-table of $\CC(\ac_{(m)})$ in 
	such a way that there are crosses in every cell which is not on the main diagonal. It is 
	well-known that the concept lattice $\CL\bigl(\CC(\ac_{(m)})\bigr)$ is isomorphic to the 
	Boolean lattice with $2^{m}$ elements. See \prettyref{fig:boolean_context} for an illustration.
	This implies that every subset of $A$ is an intent and an extent of $\CC(\ac_{(m)})$. 

	It is immediate from \prettyref{def:bijection_antichain_chain} that every row of $R_{\gamma}$
	corresponds to an intent of $\CC(\cc_{(n)})$, and that every column of $S_{\gamma}$ corresponds to
	an extent of $\CC(\cc_{(n)})$. With the previous reasoning, this implies that $R_{\gamma}$ is a 
	bond from $\CC(\ac_{(m)})$ to $\CC(\cc_{(n)})$ and $S_{\gamma}$ is a bond from $\CC(\cc_{(n)})$ to 
	$\CC(\ac_{(m)})$. Hence, conditions (1) and (2) of \prettyref{prop:classification_mergings} are 
	satisfied.

	We need to show conditions (3) and (4) of \prettyref{prop:classification_mergings}, namely 
	that $R_{\gamma}\circ S_{\gamma}$ is contained in the order relation of $\ac_{(m)}$, and that 
	$S_{\gamma}\circ R_{\gamma}$ is contained in the order relation of $\cc_{(n)}$. Let 
	$a_{i},a_{j}\in A$ satisfy $(a_{i},a_{j})\in R_{\gamma}\circ S_{\gamma}$, and let 
	$\gamma\bigl(v_{i}^{(1)}\bigr)=l_{1},\gamma\bigl(v_{j}^{(2)}\bigr)=l_{2}$. This means that there 
	is an element $c_{k}\in C$ with $n+2-l_{1}\leq k\leq n$ and $1\leq k\leq n+1-l_{2}$. Since 
	$\gamma$ is a monotone coloring, we know that $l_{1}\leq l_{2}$. We obtain
	\begin{align*}
		k\leq n+1-l_{2}\leq n+1-l_{1}<n+2-l_{1}\leq k,
	\end{align*}
	and thus $k<k$, which is a contradiction. Hence, $R_{\gamma}\circ S_{\gamma}=\emptyset$, 
	which proves condition (3). Let now, in turn, $c_{i},c_{j}\in C$ satisfy 
	$(c_{i},c_{j})\in S_{\gamma}\circ R_{\gamma}$. This means, there must be some $a_{k}\in A$
	such that the colors $\gamma\bigl(v_{k}^{(1)}\bigr)=l_{1}$, and 
	$\gamma\bigl(v_{k}^{(2)}\bigr)=l_{2}$ satisfy $n+2-j\leq l_{1}$ and $l_{2}\leq n+1-i$. Since
	$\gamma$ is a monotone coloring, we know that $l_{1}\leq l_{2}$, which implies
	\begin{align*}
		n+2-j\leq l_{1}\leq l_{2}\leq n+1-i.
	\end{align*}
	Hence, $i<j$, and $S_{\gamma}\circ R_{\gamma}$ is contained in the order relation of
	$\cc_{(n)}$ as desired for condition (4).

	It remains to show that $R_{\gamma}\cap S_{\gamma}^{-1}=\emptyset$. Assume the opposite,
	and let $(a_{i},c_{j})\in R_{\gamma}\cap S_{\gamma}^{-1}$. Let 
	$\gamma\bigl(v_{i}^{(1)}\bigr)=l_{1}$, and $\gamma\bigl(v_{i}^{(2)}\bigr)=l_{2}$. Hence,
	\begin{align*}
		n+2-l_{1}\leq j\leq n+1-l_{2},
	\end{align*}
	which implies $l_{2}<l_{1}$. This is a contradiction to $\gamma$ being a monotone coloring.
\end{proof} 

\begin{figure}
	\centering
	\exampleFourPoset
	\caption{The proper merging of a $4$-antichain $\ac_{(4)}$ and a $3$-chain $\cc_{(3)}$ defined by 
	  the relations given in \prettyref{fig:merging_from_digraph_cxt}. The green nodes represent 
	  $\ac_{(4)}$, and the black nodes represent $\cc_{(3)}$.}
	\label{fig:merging_from_digraph}
\end{figure}

\prettyref{fig:merging_from_digraph} shows the poset corresponding to the proper merging depicted in
\prettyref{fig:merging_from_digraph_cxt}. We can conclude the following theorem.

\begin{theorem}
  \label{thm:bijection_antichain_chain}
	Let $\Gamma_{n+1}(\vec{K}_{m,m})$ denote the set of monotone $(n+1)$-colorings of $\vec{K}_{m,m}$.
	Let $\mathfrak{A\!C}_{m,n}^{\bullet}$ denote the set of proper mergings of an 
	$m$-antichain and an $n$-chain. Then, the correspondence described in 
	\prettyref{def:bijection_antichain_chain} is a bijection between 
	$\Gamma_{n+1}(\vec{K}_{m,m})$ and $\mathfrak{A\!C}_{m,n}^{\bullet}$. 
\end{theorem}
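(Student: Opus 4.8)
The plan is to complete \prettyref{lem:create_merging_antichain_chain}, which already shows that $\gamma\mapsto(R_\gamma,S_\gamma)$ lands in $\mathfrak{A\!C}_{m,n}^{\bullet}$, by exhibiting an explicit inverse and checking it is well defined. The backbone of both directions is the description of $\CC(\cc_{(n)})$ from \prettyref{sec:intents_extents}: its intents are exactly the suffixes $\{c_{n+2-k},\ldots,c_{n}\}$ and its extents exactly the prefixes $\{c_{1},\ldots,c_{n+1-k}\}$, and in each case the index $k$ ranges bijectively over $\{1,2,\ldots,n+1\}$ (with $k=1$ giving the empty suffix, and $k=n+1$ the empty prefix). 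This identification of the $n+1$ intents (resp.\ extents) with the $n+1$ colors is what makes the correspondence reversible.

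Given a proper merging $(R,S)$, I would define a coloring $\gamma_{(R,S)}$ by declaring $\gamma(v_{i}^{(1)})=k$ when the row $\{a_{i}\}^{R}$ is the suffix $\{c_{n+2-k},\ldots,c_{n}\}$, and $\gamma(v_{i}^{(2)})=k$ when the column $\{a_{i}\}^{S}$ is the prefix $\{c_{1},\ldots,c_{n+1-k}\}$. Well-definedness is precisely conditions~(1) and~(2) of \prettyref{prop:classification_mergings}: since $R$ is a bond from $\CC(\ac_{(m)})$ to $\CC(\cc_{(n)})$, each of its rows is an intent of $\CC(\cc_{(n)})$, hence a suffix; dually each column of $S$ is an extent of $\CC(\cc_{(n)})$, hence a prefix. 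Thus $\gamma_{(R,S)}$ is a genuine $(n+1)$-coloring of $\vec{K}_{m,m}$.

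The crux is to verify that $\gamma_{(R,S)}$ is monotone, i.e.\ $\gamma(v_{i}^{(1)})\leq\gamma(v_{j}^{(2)})$ for every edge $(v_{i}^{(1)},v_{j}^{(2)})$; this is the reverse of the computation in the proof of \prettyref{lem:create_merging_antichain_chain}. Writing $l_{1}=\gamma(v_{i}^{(1)})$ and $l_{2}=\gamma(v_{j}^{(2)})$, the suffix $\{a_{i}\}^{R}$ and the prefix $\{a_{j}\}^{S}$ meet exactly when $n+2-l_{1}\leq n+1-l_{2}$, that is, when $l_{2}<l_{1}$, and any such meeting supplies an element $(a_{i},a_{j})\in R\circ S$. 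For a proper merging, however, $R\circ S=\emptyset$: condition~(3) forces $R\circ S$ into the diagonal of $A$, while properness $R\cap S^{-1}=\emptyset$ excludes even a diagonal pair $(a_{i},a_{i})$, since that would require some $c$ with $(a_{i},c)\in R\cap S^{-1}$. Hence no meeting can occur, giving $l_{1}\leq l_{2}$ for all $i,j$, which is monotonicity. I expect this step to be the only delicate one, as it is where the order-theoretic hypotheses of the merging get converted into the combinatorial inequality on colors.

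Finally I would check that the two passages are mutually inverse, which follows directly from the constructions. Starting from $\gamma$, forming $(R_{\gamma},S_{\gamma})$, and reading off $\gamma'=\gamma_{(R_{\gamma},S_{\gamma})}$, the defining equivalences of \prettyref{def:bijection_antichain_chain} make $\{a_{i}\}^{R_{\gamma}}$ the suffix indexed by $\gamma(v_{i}^{(1)})$ and $\{a_{i}\}^{S_{\gamma}}$ the prefix indexed by $\gamma(v_{i}^{(2)})$, so $\gamma'=\gamma$; conversely, starting from $(R,S)$ and forming $\gamma_{(R,S)}$, the merging $(R_{\gamma_{(R,S)}},S_{\gamma_{(R,S)}})$ has the same rows and columns as $(R,S)$ and therefore equals it. This establishes that the correspondence of \prettyref{def:bijection_antichain_chain} is a bijection.
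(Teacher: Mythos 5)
Your proposal is correct, and structurally it follows the paper's proof: the forward direction is delegated to \prettyref{lem:create_merging_antichain_chain}, the inverse coloring reads off which suffix each row of $R$ is and which prefix each column of $S$ is (well-defined by conditions~(1) and~(2) of \prettyref{prop:classification_mergings} together with the description of the intents and extents of $\CC(\cc_{(n)})$), and the real work is the monotonicity check. At that crux, however, your argument genuinely differs from the paper's. The paper never establishes $R\circ S=\emptyset$: from the overlap of suffix and prefix it obtains $(a_{i},a_{j})\in R\circ S$, invokes condition~(3) to force $a_{i}=a_{j}$, then passes to the extremal elements $c_{n+2-l_{1}}$ and $c_{n+1-l_{2}}$, gets $(c_{n+1-l_{2}},c_{n+2-l_{1}})\in S\circ R$, and finishes with a case split: $l_{1}=l_{2}+1$ contradicts $R\cap S^{-1}=\emptyset$, while $l_{1}>l_{2}+1$ contradicts condition~(4). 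Your observation that condition~(3) plus properness already gives $R\circ S=\emptyset$ (the antichain order is the diagonal, and a diagonal pair $(a_{i},a_{i})\in R\circ S$ would exhibit an element of $R\cap S^{-1}$) collapses this into a single contradiction and never needs condition~(4); it also isolates a reusable fact, namely that in any proper merging $(R,S)$ of an antichain with an arbitrary poset one has $R\circ S=\emptyset$. Two smaller points in your favor: stating the inverse as ``$\gamma(v_{i}^{(1)})=k$ when the row \emph{is} the suffix indexed by $k$'' is the precise reading of the paper's looser ``for all $j$'' formulation (which, taken literally, is satisfied by every smaller $k$ as well), and you make the mutual-inverse verification explicit where the paper leaves it implicit.
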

\begin{proof}
	It follows immediately from \prettyref{lem:create_merging_antichain_chain} that each monotone 
	$(n+1)$-coloring of $\vec{K}_{m,m}$ induces a proper merging of an $m$-antichain and an $n$-chain. 

	Let $\ac_{(m)}=(A,=)$ be an $m$-antichain, where $A=\{a_{1},a_{2},\ldots,a_{m}\}$, and let 
	$\cc_{(n)}=(C,\leq)$ be an $n$-chain, where $C=\{c_{1},c_{2},\ldots,c_{n}\}$ and the ordering
	is induced by the indices. Let $(R,S)$ be a proper merging of $\ac_{(m)}$ and $\cc_{(n)}$. 
	Consider the complete bipartite graph $\vec{K}_{m,m}$ and let its vertex set be partitioned 
	into $V_{1}$ and $V_{2}$, with $V_{1}=\{v_{1}^{(1)},v_{2}^{(1)},\ldots,v_{m}^{(1)}\}$ and
	$V_{2}=\{v_{1}^{(2)},v_{2}^{(2)},\ldots,v_{m}^{(2)}\}$. Define a coloring $\gamma_{(R,S)}$ 
	of $\vec{K}_{m,m}$ via
	\begin{multline*}
		\gamma_{(R,S)}\bigl(v_{i}^{(1)}\bigr)=k\quad\text{if and only if}\\
		\shoveright{a_{i}\;R\;c_{j}\;\text{for all}\;j\in\{n+2-k,n+3-k,\ldots,n\},\quad\text{and}}\\
		\shoveleft{\gamma_{(R,S)}\bigl(v_{i}^{(2)}\bigr)=k\quad\text{if and only if}}\\
		c_{j}\;S\;a_{i}\;\text{for all}\;j\in\{1,2,\ldots,n+1-k\},
	\end{multline*}
	for all $i\in\{1,2,\ldots,m\}$. Since $R$ is a bond from $\CC(\ac_{(m)})$ to $\CC(\cc_{(n)})$, 
	every subset of $V_{1}$ can be colored with color $k$, for $1\leq k\leq n+1$. (Every subset of $A$ 
	is an extent of $\CC(\ac_{(m)})$, and the set $\{c_{n+2-k},c_{n+3-k},\ldots,c_{n}\}$ is an 
	intent of $\CC(\cc_{(n)})$ for every $k\in\{1,2,\ldots,n+1\}$.) Since $S$ is a bond from 
	$\CC(\cc_{(n)})$ to $\CC(\ac_{(m)})$, the same property holds for $V_{2}$. Hence, $\gamma_{(R,S)}$ 
	is an $(n+1)$-coloring of $\vec{K}_{m,m}$.

	Let $v_{i}^{(1)}\in V_{1}$ and $v_{j}^{(2)}\in V_{2}$, with 
	$\gamma_{(R,S)}\bigl(v_{i}^{(1)}\bigr)=l_{1}$ and 
	$\gamma_{(R,S)}\bigl(v_{j}^{(2)}\bigr)=l_{2}$. By definition, it follows that 
	$a_{i}\;R\;c_{k_{1}}$ for all $k_{1}\in\{n+2-l_{1},n+3-l_{1},\ldots,n\}$, and 
	$c_{k_{2}}\;S\;a_{j}$ for all $k_{2}\in\{1,2,\ldots,n+1-l_{2}\}$. Assume that $l_{1}>l_{2}$. 
	Hence, there exists a $k\in\{1,2,\ldots,n\}$ with $a_{i}\;R\;c_{k}$ and $c_{k}\;S\;a_{j}$. 
	Since $R\circ S$ is contained in the order relation of $\ac_{(m)}$, it follows that 
	$a_{i}=a_{j}$. This means in particular that $a_{i}\;R\;c_{n+2-l_{1}}$ and 
	$c_{n+1-l_{2}}\;S\;a_{i}$, and thus $(c_{n+1-l_{2}},c_{n+2-l_{1}})\in S\circ R$. If 
	$l_{1}=l_{2}+1$, then $c_{n+2-l_{1}}=c_{n+1-l_{2}}$, which is a contradiction to 
	$R\cap S^{-1}=\emptyset$. If $l_{1}>l_{2}+1$, then $c_{n+2-l_{1}}<c_{n+1-l_{2}}$, which is a 
	contradiction to $S\circ R$ being contained in the order relation of $\cc_{(n)}$. Thus, 
	$\gamma_{(R,S)}$ is a monotone coloring of $\vec{K}_{m,m}$ with at most $n+1$ colors.
\end{proof}

An extensive illustration of this bijection can be found in \prettyref{app:bijection_antichain_chain}. 

\begin{proof}[Proof of \prettyref{thm:enumeration_antichain_chain}]
	This follows immediately from \prettyref{thm:bijection_antichain_chain} and 
	\prettyref{prop:monotone_colorings}. 
\end{proof}

\begin{remark}
	Let $\gamma,\delta\in\Gamma_{n+1}(\vec{K}_{m,m})$, and let $V$ denote the vertex set of
	$\vec{K}_{m,m}$. Define a partial order $\leq$ as
	\begin{align*}
		\gamma\leq\delta\quad\text{if and only if}\quad \gamma(v)\leq\delta(v),
	\end{align*}
	for all vertices $v\in V$. Consider the partition $V=V_{1}\cup V_{2}$. Let
	$(R_{\gamma},S_{\gamma})$, and $(R_{\delta},S_{\delta})$ denote the proper mergings associated to
	$\gamma$ respectively to $\delta$ in the sense of \prettyref{def:bijection_antichain_chain}. 
	Suppose that $(R_{\gamma},S_{\gamma})\preceq(R_{\delta},S_{\delta})$, and hence by definition
	$R_{\gamma}\subseteq R_{\delta}$, and $S_{\gamma}\supseteq S_{\delta}$. This implies  
	$\gamma(v)\leq\delta(v)$ if $v\in V_{1}$, and $\gamma(v)\leq\delta(v)$ if $v\in V_{2}$, and
	hence $\gamma\leq\delta$. This means that the bijection described in 
	\prettyref{thm:bijection_antichain_chain} is indeed an isomorphism between the lattices
	$(\mathfrak{A\!C}_{m,n}^{\bullet},\preceq)$ and $(\Gamma_{n+1}(\vec{K}_{m,m}),\leq)$. 
\end{remark}

\subsection{Counting Galois Connections between Boolean Lattices and Chains}
  \label{sec:galois_connections_boolean_chain}
Similarly to \prettyref{sec:galois_connections_chains}, we can exploit the bijection described in
\prettyref{thm:bijection_antichain_chain} in order to count the Galois connections between
chains with $n+1$ elements and Boolean lattices with $2^{m}$ elements. 
\prettyref{thm:galois_connections} states that every such Galois connection can be described as a 
dual bond from $(C,C,<)$ to $(A,A,\neq)$, where $A=\{a_{1},a_{2},\ldots,a_{m})$ and 
$C=\{c_{1},c_{2},\ldots,c_{n}\}$. Since every extent of $(A,A,\neq)$ is also an intent of $(A,A,\neq)$
every dual bond from $(C,C,<)$ to $(A,A,\neq)$ corresponds to a bond from $(C,C,<)$ to $(A,A,\neq)$. 
By definition, each such bond corresponds to a proper merging of $(A,=)$ and $(C,\leq)$, which is 
of the form $(\emptyset,\cdot)$. It follows immediately from \prettyref{def:bijection_antichain_chain}
that each such proper merging corresponds to a monotone coloring of $\vec{K}_{m,m}$, where the 
vertices in $V_{1}$ all have color $1$. Hence, each vertex in $V_{2}$ can take every color 
$k\in\{1,2,\ldots,n+1\}$. Thus, we can conclude the following proposition.

\begin{proposition}
  \label{prop:galois_connections_boolean_chain}
	Let $B_{m}$ denote the Boolean lattice with $2^{m}$ elements, and let $\cc_{(n+1)}$ denote a
	chain with $n+1$ elements. The number of Galois connections between $B_{m}$ and $\cc_{(n+1)}$ is
	$(n+1)^{m}$.
\end{proposition}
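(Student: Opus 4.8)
The plan is to follow the same strategy used for \prettyref{prop:galois_connections_chains}: translate Galois connections into dual bonds via \prettyref{thm:galois_connections}, recognise these dual bonds as proper mergings of a special shape, and then count them through the bijection of \prettyref{thm:bijection_antichain_chain}. Concretely, by \prettyref{thm:galois_connections} each Galois connection between $B_{m}\cong\CL\bigl(\CC(\ac_{(m)})\bigr)$ and $\cc_{(n+1)}\cong\CL\bigl(\CC(\cc_{(n)})\bigr)$ corresponds to a dual bond from $\CC(\cc_{(n)})$ to $\CC(\ac_{(m)})$, and since every extent of $\CC(\ac_{(m)})=(A,A,\neq)$ is simultaneously an intent, such a dual bond is the same thing as an ordinary bond $S$ from $\CC(\cc_{(n)})$ to $\CC(\ac_{(m)})$. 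So I would reduce the statement to counting such bonds $S$.

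First I would verify that a bond $S$ from $\CC(\cc_{(n)})$ to $\CC(\ac_{(m)})$ is exactly the second component of a proper merging of the form $(\emptyset,S)$. This mirrors the bracketed computation in the proof of \prettyref{prop:galois_connections_chains}: since $R=\emptyset$ satisfies $R\circ S=\emptyset=S\circ R$ and $R\cap S^{-1}=\emptyset$, conditions (3), (4) and properness in \prettyref{prop:classification_mergings} hold automatically, while condition (1) amounts to $\emptyset$ being a bond, which holds because neither $\CC(\cc_{(n)})$ nor $\CC(\ac_{(m)})$ contains a full row or a full column. Thus the proper mergings $(\emptyset,S)$ are precisely the bonds $S$, and by \prettyref{thm:bijection_antichain_chain} they correspond bijectively to those monotone $(n+1)$-colorings $\gamma$ of $\vec{K}_{m,m}$ with $R_{\gamma}=\emptyset$; by \prettyref{def:bijection_antichain_chain} this is exactly the condition that every vertex of $V_{1}$ receives color $1$.

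The counting step is then immediate. Because all vertices of $V_{1}$ carry the minimal color $1$ and every edge of $\vec{K}_{m,m}$ runs from $V_{1}$ to $V_{2}$, the monotonicity constraint $\gamma\bigl(v_{i}^{(1)}\bigr)\leq\gamma\bigl(v_{j}^{(2)}\bigr)$ holds for every coloring of $V_{2}$. Hence each of the $m$ vertices of $V_{2}$ may be colored independently with any of the $n+1$ colors, giving exactly $(n+1)^{m}$ such colorings, and therefore $(n+1)^{m}$ Galois connections between $B_{m}$ and $\cc_{(n+1)}$.

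I expect no serious obstacle here: the only point requiring care is the reduction of the preceding paragraph, namely checking that for $R=\emptyset$ the merging conditions of \prettyref{prop:classification_mergings} collapse to ``$S$ is a bond,'' which is the direct analogue of the argument already carried out in the chain case. An alternative, equally short route bypasses the coloring picture entirely: a bond $S$ is determined by its columns $\{a\}^{S}$, each of which must be an extent of $\CC(\cc_{(n)})$, i.e.\ a down-set $\{c_{1},\ldots,c_{i}\}$ with $i\in\{0,1,\ldots,n\}$, while the row condition is vacuous since every subset of $A$ is an intent of $\CC(\ac_{(m)})$. This yields $n+1$ independent choices for each of the $m$ columns, again $(n+1)^{m}$.
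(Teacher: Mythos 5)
Your proposal is correct and follows essentially the same route as the paper: translate Galois connections into dual bonds via \prettyref{thm:galois_connections}, identify these with bonds (since extents and intents of $(A,A,\neq)$ coincide), recognise bonds as proper mergings of the form $(\emptyset,\cdot)$, and count them as monotone colorings of $\vec{K}_{m,m}$ in which all of $V_{1}$ has color $1$, giving $(n+1)^{m}$. Your explicit verification that the merging conditions of \prettyref{prop:classification_mergings} collapse to ``$S$ is a bond'' when $R=\emptyset$ spells out what the paper leaves implicit (``by definition''), and your alternative column-counting argument is a valid shortcut to the same count.
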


An extensive illustration of this proposition can be found in 
\prettyref{app:bijection_galois_bonds_antichain_chain}.

\section*{Acknowledgements}
The author wants to thank Christian Krattenthaler for many helpful suggestions on the proof of 
\prettyref{thm:enumeration_antichains}, in particular for finding the generating function.

\bibliography{../../literature}

\clearpage

\appendix

\section{Illustration of \prettyref{thm:bijection_chains}, $m=n=2$}
  \label{app:bijection_chains}
\begin{longtable}{c|cc|c}
	$\pi\in\text{PP}_{2,2}^{(2)}$ & $R_{\pi}$ & $S_{\pi}$ 
		& $(C_{1}\cup C_{2},\leq_{R_{\pi},S_{\pi}})$\\
	\hline
	\plp{0}{0}{0}{0} & \rTabOne & \sTabOne & \lOne{0}{1} \\
	\hline
	\plp{1}{0}{0}{0} & \rTabOne & \sTabTwo & \raisebox{.15cm}{\lTwo{0}{1}} \\
	\hline
	\plp{1}{0}{1}{0} & \rTabOne & \sTabThree & \raisebox{.15cm}{\lFour{0}{1}} \\
	\hline
	\plp{1}{1}{0}{0} & \rTabOne & \sTabFour & \raisebox{.15cm}{\lThree{0}{1}} \\
	\hline
	\plp{1}{1}{1}{0} & \rTabOne & \sTabFive & \raisebox{.3cm}{\lSix{0}{1}} \\
	\hline
	\plp{1}{1}{1}{1} & \rTabOne & \sTabSix & \raisebox{.3cm}{\lNine{0}{1}} \\
	\hline
	\plp{2}{0}{0}{0} & \rTabTwo & \sTabTwo & \lFive{0}{1} \\
	\hline
	\plp{2}{0}{1}{0} & \rTabTwo & \sTabThree & \raisebox{.15cm}{\lEight{0}{1}} \\
	\hline
	\plp{2}{0}{2}{0} & \rTabThree & \sTabThree & \lTwelve{0}{1} \\
	\hline
	\plp{2}{1}{0}{0} & \rTabTwo & \sTabFour & \raisebox{.15cm}{\lSeven{0}{1}} \\
	\hline
	\plp{2}{1}{1}{0} & \rTabTwo & \sTabFive & \raisebox{.3cm}{\lTen{0}{1}}\\
	\hline
	\plp{2}{1}{1}{1} & \rTabTwo & \sTabSix & \raisebox{.3cm}{\reflectbox{\lSix{180}{1}}} \\
	\hline
	\plp{2}{1}{2}{0} & \rTabThree & \sTabFive & \raisebox{.15cm}{\reflectbox{\lEight{180}{1}}} \\
	\hline
	\plp{2}{1}{2}{1} & \rTabThree & \sTabSix & \raisebox{.15cm}{\reflectbox{\lFour{180}{1}}} \\
	\hline
	\plp{2}{2}{0}{0} & \rTabFour & \sTabFour & \lEleven{0}{1} \\
	\hline
	\plp{2}{2}{1}{0} & \rTabFour & \sTabFive & \raisebox{.15cm}{\reflectbox{\lSeven{180}{1}}} \\
	\hline
	\plp{2}{2}{1}{1} & \rTabFour & \sTabSix & \raisebox{.15cm}{\reflectbox{\lThree{180}{1}}} \\
	\hline
	\plp{2}{2}{2}{0} & \rTabFive & \sTabFive & \lFive{180}{1} \\
	\hline
	\plp{2}{2}{2}{1} & \rTabFive & \sTabSix & \raisebox{.15cm}{\reflectbox{\lTwo{180}{1}}} \\
	\hline
	\plp{2}{2}{2}{2} & \rTabSix & \sTabSix & \raisebox{-.25cm}{\lOne{180}{1}} \\
\end{longtable}

\section{Illustration of \prettyref{rem:generalized_bijection_chains}, $n_{1}=n_{2}=n_{3}=1$}
  \label{app:generalized_bijection_chains}
\begin{longtable}{c|ccc|c}
	Collection & $\pi_{1}$ & $\pi_{2}$ & $\pi_{3}$ & Proper Merging\\
	\hline
	\solPar{1}{0}{0} & \planeOne{1} & \planeTwo{0} & \planeThree{0} & \multirow{2}{.75cm}{\uEight{0}{1}}\\
	\solPar{2}{0}{0} & \planeOne{2} & \planeTwo{0} & \planeThree{0} & \\
	\hline
	\solPar{0}{1}{0} & \planeOne{0} & \planeTwo{1} & \planeThree{0} & \multirow{2}{.75cm}{\uFour{0}{1}}\\
	\solPar{0}{2}{0} & \planeOne{0} & \planeTwo{2} & \planeThree{0} & \\
	\hline
	\solPar{0}{0}{1} & \planeOne{0} & \planeTwo{0} & \planeThree{1} & \multirow{2}{.75cm}{\uOne{180}{1}}\\
	\solPar{0}{0}{2} & \planeOne{0} & \planeTwo{0} & \planeThree{2} & \\
	\hline
	\solPar{1}{1}{0} & \planeOne{1} & \planeTwo{1} & \planeThree{0} & \raisebox{.45cm}{\uSix{0}{1}}\\
	\hline
	\solPar{1}{0}{1} & \planeOne{1} & \planeTwo{0} & \planeThree{1} & \reflectbox{\raisebox{.45cm}{\uFive{180}{1}}}\\
	\hline
	\solPar{0}{1}{1} & \planeOne{0} & \planeTwo{1} & \planeThree{1} & \raisebox{.45cm}{\uSeven{180}{1}}\\
	\hline
	\solPar{1}{1}{1} & \planeOne{1} & \planeTwo{1} & \planeThree{1} & \raisebox{.7cm}{\uTen{0}{1}}\\
	\hline
	\solPar{0}{1}{2} & \planeOne{0} & \planeTwo{1} & \planeThree{2} & \raisebox{.45cm}{\uThree{180}{1}}\\
	\hline
	\solPar{0}{2}{1} & \planeOne{0} & \planeTwo{2} & \planeThree{1} & \raisebox{.45cm}{\uNine{0}{1}}\\
	\hline
	\solPar{1}{0}{2} & \planeOne{1} & \planeTwo{0} & \planeThree{2} & \reflectbox{\raisebox{.45cm}{\uTwo{180}{1}}}\\
	\hline
	\solPar{1}{2}{0} & \planeOne{1} & \planeTwo{2} & \planeThree{0} & \raisebox{.45cm}{\uTwo{0}{1}}\\
	\hline
	\solPar{2}{0}{1} & \planeOne{2} & \planeTwo{0} & \planeThree{1} & \reflectbox{\raisebox{.45cm}{\uNine{180}{1}}}\\
	\hline
	\solPar{2}{1}{0} & \planeOne{2} & \planeTwo{1} & \planeThree{0} & \raisebox{.45cm}{\uThree{0}{1}}\\
	\hline
	\solPar{1}{1}{2} & \planeOne{1} & \planeTwo{1} & \planeThree{2} & \reflectbox{\raisebox{.45cm}{\uSix{180}{1}}}\\
	\hline
	\solPar{1}{2}{1} & \planeOne{1} & \planeTwo{2} & \planeThree{1} & \raisebox{.45cm}{\uFive{0}{1}}\\
	\hline
	\solPar{2}{1}{1} & \planeOne{2} & \planeTwo{1} & \planeThree{1} & \reflectbox{\raisebox{.45cm}{\uSeven{0}{1}}}\\
	\hline
	\solPar{1}{2}{2} & \planeOne{1} & \planeTwo{2} & \planeThree{2} & \multirow{2}{.75cm}{\uEight{180}{1}}\\
	\solPar{0}{2}{2} & \planeOne{0} & \planeTwo{2} & \planeThree{2} & \\
	\hline
	\solPar{2}{1}{2} & \planeOne{2} & \planeTwo{1} & \planeThree{2} & \multirow{2}{.75cm}{\uFour{180}{1}}\\
	\solPar{2}{0}{2} & \planeOne{2} & \planeTwo{0} & \planeThree{2} & \\
	\hline
	\solPar{2}{2}{1} & \planeOne{2} & \planeTwo{2} & \planeThree{1} & \multirow{2}{.75cm}{\uOne{0}{1}}\\
	\solPar{2}{2}{0} & \planeOne{2} & \planeTwo{2} & \planeThree{0} & \\
\end{longtable}

\section{Illustration of \prettyref{prop:galois_connections_chains}, $m=n=2$}
  \label{app:bijection_galois_bonds}
\begin{longtable}{c|c|cc}
	$\pi\in\text{PP}_{2,2}^{(1)}$ & $T_{\pi}$ & $\psi_{T_{\pi}}$ & $\varphi_{T_{\pi}}$ \\
	\hline
	\plp{0}{0}{0}{0} & \tTabOne & \GalPsi{b1}{a3}{b2}{a3}{b3}{a3} 
	  & \GalPhi{a1}{b3}{a2}{b3}{a3}{b3} \\
	\hline
	\plp{1}{0}{0}{0} & \tTabTwo & \GalPsi{b1}{a3}{b2}{a3}{b3}{a2} 
	  & \GalPhi{a1}{b3}{a2}{b3}{a3}{b2} \\
	\hline
	\plp{1}{0}{1}{0} & \tTabThree & \GalPsi{b1}{a3}{b2}{a3}{b3}{a1} 
	  & \GalPhi{a1}{b3}{a2}{b2}{a3}{b2} \\
	\hline
	\plp{1}{1}{0}{0} & \tTabFour & \GalPsi{b1}{a3}{b2}{a2}{b3}{a2} 
	  & \GalPhi{a1}{b3}{a2}{b3}{a3}{b1} \\
	\hline
	\plp{1}{1}{1}{0} & \tTabFive & \GalPsi{b1}{a3}{b2}{a2}{b3}{a1} 
	  & \GalPhi{a1}{b3}{a2}{b2}{a3}{b1} \\
	\hline
	\plp{1}{1}{1}{1} & \tTabSix & \GalPsi{b1}{a3}{b2}{a1}{b3}{a1} 
	  & \GalPhi{a1}{b3}{a2}{b1}{a3}{b1} \\
\end{longtable}

\section{Illustration of \prettyref{thm:bijection_antichain_chain}, $m=n=2$}
  \label{app:bijection_antichain_chain}
\begin{longtable}{c|cc|c}
	$\gamma\in\Gamma_{3}(\vec{K}_{2,2})$ & $R_{\gamma}$ & $S_{\gamma}$ 
			& $(A\cup C,\leq_{R_{\gamma},S_{\gamma}})$ \\
	\hline
	\bip{1}{1}{1}{1} & \rtabOne & \stabOne & \nOne{180}{1} \\
	\hline
	\bip{1}{1}{1}{2} & \rtabOne & \stabTwo & \nThree{180}{1} \\
	\hline
	\bip{1}{1}{1}{3} & \rtabOne & \stabThree & \nSix{180}{1} \\
	\hline
	\bip{1}{1}{2}{1} & \rtabOne & \stabFour & \nTwo{180}{1} \\
	\hline
	\bip{1}{1}{2}{2} & \rtabOne & \stabFive & \nFive{180}{1} \\
	\hline
	\bip{1}{2}{2}{2} & \rtabTwo & \stabFive & \nTen{180}{1} \\
	\hline
	\bip{2}{1}{2}{2} & \rtabThree & \stabFive & \nSeven{180}{1} \\
	\hline
	\bip{2}{2}{2}{2} & \rtabFour & \stabFive & \nThirteen{0}{1} \\
	\hline
	\bip{1}{1}{2}{3} & \rtabOne & \stabSix & \nNine{180}{1} \\
	\hline
	\bip{1}{2}{2}{3} & \rtabTwo & \stabSix & \nFifteen{0}{1} \\
	\hline
	\bip{2}{1}{2}{3} & \rtabThree & \stabSix & \nTwelve{0}{1} \\
	\hline
	\bip{2}{2}{2}{3} & \rtabFour & \stabSix & \nTen{0}{1} \\
	\hline
	\bip{1}{1}{3}{1} & \rtabOne & \stabSeven & \nFour{180}{1} \\
	\hline
	\bip{1}{1}{3}{2} & \rtabOne & \stabEight & \nEight{180}{1} \\
	\hline
	\bip{1}{2}{3}{2} & \rtabTwo & \stabEight & \nTwelve{180}{1} \\
	\hline
	\bip{2}{1}{3}{2} & \rtabThree & \stabEight & \nEleven{0}{1} \\
	\hline
	\bip{2}{2}{3}{2} & \rtabFour & \stabEight & \nSeven{0}{1} \\
	\hline
	\bip{1}{1}{3}{3} & \rtabOne & \stabNine & \nFourteen{0}{1} \\
	\hline
	\bip{1}{2}{3}{3} & \rtabTwo & \stabNine & \nNine{0}{1} \\
	\hline
	\bip{1}{3}{3}{3} & \rtabFive & \stabNine & \nSix{0}{1} \\
	\hline
	\bip{2}{1}{3}{3} & \rtabThree & \stabNine & \nEight{0}{1} \\
	\hline
	\bip{2}{2}{3}{3} & \rtabFour & \stabNine & \nFive{0}{1} \\
	\hline
	\bip{2}{3}{3}{3} & \rtabSix & \stabNine & \nThree{0}{1} \\
	\hline
	\bip{3}{1}{3}{3} & \rtabSeven & \stabNine & \nFour{0}{1} \\
	\hline
	\bip{3}{2}{3}{3} & \rtabEight & \stabNine & \nTwo{0}{1} \\
	\hline
	\bip{3}{3}{3}{3} & \rtabNine & \stabNine & \nOne{0}{1} \\
\end{longtable}

\section{Illustration of \prettyref{prop:galois_connections_boolean_chain}, $m=n=2$}
  \label{app:bijection_galois_bonds_antichain_chain}
\begin{longtable}{c|c|cc}
	$\gamma\in\Gamma_{3}(\vec{K}_{2,2}), \gamma(V_{1})\equiv 1$ & $S_{\gamma}$ 
		& $\psi_{S_{\gamma}}$ & $\varphi_{S_{\gamma}}$ \\
	\hline
	\bip{1}{1}{1}{1} & \stabOne & \galPsi{b1}{a4}{b2}{a4}{b3}{a4} 
		& \galPhi{a1}{b3}{a2}{b3}{a3}{b3}{a4}{b3} \\
	\hline
	\bip{1}{1}{1}{2} & \stabTwo & \galPsi{b1}{a4}{b2}{a4}{b3}{a3} 
		& \galPhiTwo{a1}{b3}{a2}{b2}{a3}{b3}{a4}{b2} \\
	\hline
	\bip{1}{1}{1}{3} & \stabThree & \galPsiTwo{b1}{a4}{b2}{a3}{b3}{a3} 
		& \galPhi{a1}{b3}{a2}{b1}{a3}{b3}{a4}{b1} \\
	\hline
	\bip{1}{1}{2}{1} & \stabFour & \galPsi{b1}{a4}{b2}{a4}{b3}{a2} 
		& \galPhi{a1}{b3}{a2}{b3}{a3}{b2}{a4}{b2} \\
	\hline
	\bip{1}{1}{2}{2} & \stabFive & \galPsi{b1}{a4}{b2}{a4}{b3}{a1} 
		& \galPhiTwo{a1}{b3}{a2}{b2}{a3}{b2}{a4}{b2} \\
	\hline
	\bip{1}{1}{2}{3} & \stabSix & \galPsiTwo{b1}{a4}{b2}{a3}{b3}{a1} 
		& \galPhi{a1}{b3}{a2}{b1}{a3}{b2}{a4}{b1} \\
	\hline
	\bip{1}{1}{3}{1} & \stabSeven & \galPsi{b1}{a4}{b2}{a2}{b3}{a2} 
		& \galPhi{a1}{b3}{a2}{b3}{a3}{b1}{a4}{b1} \\
	\hline
	\bip{1}{1}{3}{2} & \stabEight & \galPsi{b1}{a4}{b2}{a2}{b3}{a1} 
		& \galPhiTwo{a1}{b3}{a2}{b2}{a3}{b1}{a4}{b1} \\
	\hline
	\bip{1}{1}{3}{3} & \stabNine & \galPsi{b1}{a4}{b2}{a1}{b3}{a1} 
		& \galPhi{a1}{b3}{a2}{b1}{a3}{b1}{a4}{b1} \\
\end{longtable}

\end{document}